\newcommand{\bbN}{{\mathbb{N}}}
\newcommand{\bbR}{{\mathbb{R}}}
\newcommand{\bbH}{{\mathbb{H}}}
\newcommand{\bbE}{{\mathbb{E}}}
\newcommand{\bbZ}{{\mathbb{Z}}}
\newcommand{\bbC}{{\mathbb{C}}}
\newcommand{\calA}{\mathcal A}
\newcommand{\calB}{\mathcal B}
\newcommand{\calC}{\mathcal C}
\newcommand{\no}{\nonumber}
\newcommand{\eps}{\varepsilon}
\newcommand{\beq}{\begin{equation}}
\newcommand{\eeq}{\end{equation}}
\newcommand{\ba}{\begin{align}}
\newcommand{\ea}{\end{align}}
\renewcommand{\Re}{\mathop{\mathrm{Re}}}
\renewcommand{\Im}{\mathop{\mathrm{Im}}}
\DeclareMathOperator{\Tr}{Tr}
\DeclareMathOperator{\Var}{Var}
\numberwithin{equation}{section}
\newtheorem{theorem}{Theorem}[section]
\newtheorem{proposition}[theorem]{Proposition}
\newtheorem{lemma}[theorem]{Lemma}
\newtheorem{corollary}[theorem]{Corollary}
\theoremstyle{definition}
\theoremstyle{remark}
\newtheorem{remark}{Remark}[section]
\newtheorem{hypothesis}{Hypothesis}[section]
\title{Universality of mesoscopic fluctuations for orthogonal polynomial ensembles}
\author{Jonathan Breuer \footnote{Einstein Institute of Mathematics, The Hebrew University of Jerusalem, Jerusalem
91904, Israel. E-mail: jbreuer@math.huji.ac.il. Supported in part by the US-Israel Binational Science Foundation (BSF) Grant no.\ 2010348 and by the Israel Science Foundation (ISF) Grant no.\ 1105/10.}  \and Maurice Duits \footnote{Department of Mathematics, Stockholm University,  SE-106 91 Stockholm, Sweden. Email: duits@math.su.se. Supported in part by the grant KAW 2010.0063 from the Knut and Alice Wallenberg Foundation and by the Swedish Research Council (VR) Grant no.\ 2012-3128.} }
\date{\today}
\begin{document}
\maketitle
\begin{abstract}
We prove that the fluctuations of mesocopic linear statistics for orthogonal polynomial ensembles are universal in the sense that two measures with asymptotic recurrence coefficients have the same asymptotic mesoscopic fluctuations (under an additional assumption on the local regularity of one of the measures). The convergence rate of the recurrence coefficients determines the range of scales on which the limiting fluctuations are identical. Our main tool is an analysis of the Green's function for the associated Jacobi matrices. As a  particular consequence we obtain a Central Limit Theorem  for the modified Jacobi Unitary Ensembles on all mesosopic scales.
\end{abstract}

\section{Introduction and statement of results}

Let $\mu$ be a Borel measure on $\bbR$ with finite moments (i.e., $\int_{\bbR} |x|^k {\rm d} \mu(x) <\infty$ for $k=0,1,2,\ldots$). The orthogonal polynomial ensemble of size $n \in \bbN$  is defined as the probability measure on $\bbR^n$ proportional to 
\begin{equation}\label{eq:defOPE}
\prod_{1\leq i<j  \leq n } (x_i-x_j)^2 {\rm d} \mu(x_1) \cdots {\rm d} \mu(x_n).
\end{equation}
Many interesting models in probability, particularly in random matrix theory, lead to orthogonal polynomial ensembles, (for a survey of such models we refer to \cite{K}). A relevant example here are the modified Jacobi Unitary Ensembles. These are the orthogonal polynomial ensembles with   
\begin{equation}\label{eq:JacobiWeight}
{\rm d} \mu(x)=h(x) (1-x)^{\gamma_1}(1+x)^{\gamma_2}{\rm d} x, \qquad \text{ on } [-1,1],
\end{equation}
with $\gamma_1,\gamma_2 >-1$ and $h$ a function that is analytic in a neighborhood of $[-1,1]$ and strictly positive on $[-1,1]$. When $h=1$ this weight is the classical Jacobi weight. 

A significant amount of research on orthogonal polynomial ensembles (and other random matrix models) has focused on proving universality results, with a large part focusing on what may be called \emph{microscopic} universality. To give an example, in the case of the Jacobi weight \eqref{eq:JacobiWeight} the random points $x_j$ accumulate on the interval $[-1,1]$ as $n \to \infty$, and, with probability one, their configuration (that is, the empirical measure) converges to the arcsine law on $[-1,1]$, see \cite{KMVV}. This means that when we zoom in around a point in the bulk $x_0 \in (-1,1)$, the distance between neighboring points is of order $\sim 1/n$. On this scale, the local correlations between these neighboring points near $x_0$ are governed by the \emph{sine} process \cite{KV}. The universality conjecture says that this is not special for the Jacobi weight. Indeed, the sine process appears as a limit in the bulk for a wide class of measure $\mu$. In the past two decades, this has been proved in many different settings but we do not review this here. 

In this paper, we will be interested in a different type of universality, namely that of the \emph{mesoscopic fluctuations} in the bulk.
We shall prove a Central Limit Theorem (i.e., asymptotic normality of these fluctuations) for a large class of measures supported on an interval.  

To be more explicit, let $x_0 \in (-1,1)$ and $0<\alpha<1$. For a function $f:\bbR \to \bbR$ we define the linear statistic $X^{(n)}_{f,\alpha,x_0}$ by 
\begin{equation}
X^{(n)}_{f,\alpha,x_0}= \sum_{j=1}^n f(n^\alpha(x_j-x_0)).
\end{equation}
Note that if $f$ has compact support, the linear statistic will only depend on particles that are at distance $\sim n^{-\alpha
}$ to $x_0$. Since $0<\alpha<1$ we refer to such distances as the \emph{mesoscopic} scales, compared to the microscopic and macroscopic scales that would correspond to $\alpha=1$ and $\alpha=0$ respectively. 

A consequence of our main result is the following result for the Jacobi weight: 

\begin{theorem} \label{thm:JacobiCLT}
Consider the orthogonal polynomial ensemble with the modified Jacobi weight \eqref{eq:JacobiWeight} with $\gamma_1,\gamma_2 >-1$ and  $h$ a function that is analytic in a neighborhood of $[-1,1]$ and strictly positive on $[-1,1]$.
For any $f \in C_c^1(\bbR)$ $($the continuously differentiable functions with compact support$)$ and for any $0<\alpha<1$ we have that 
\begin{equation}\label{eq:CLTJacobi}
X^{(n)}_{f,\alpha,x_0}-\mathbb E X^{(n)}_{f,\alpha,x_0} \to N(0, \sigma_f^2),
\end{equation}
where 
\begin{equation}\label{eq:CLTJacobivariance} 
\sigma_f^2= \frac{1}{4 \pi^2} \iint \left( \frac{f(x)-f(y)}{x-y} \right)^2 {\rm d} x {\rm d} y.
\end{equation}
\end{theorem}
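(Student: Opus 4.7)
The natural plan is to deduce Theorem \ref{thm:JacobiCLT} from the paper's main universality theorem by choosing a convenient reference ensemble and transferring the CLT from the reference to the modified Jacobi ensemble. I would take the reference to be the second-kind Chebyshev ensemble with weight $d\nu(x) = \tfrac{2}{\pi}\sqrt{1-x^2}\,dx$ on $[-1,1]$, whose Jacobi matrix is the free (constant-coefficient) matrix with $a_n \equiv 1/2$ and $b_n \equiv 0$. The advantage of this choice is that its correlation kernel is explicit in terms of Chebyshev polynomials of the second kind, so that the mesoscopic CLT can be checked by direct computation.

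The first step is to verify the hypotheses of the universality theorem for the pair $(\mu,\nu)$. The Riemann-Hilbert asymptotics for orthogonal polynomials with respect to the modified Jacobi weight \cite{KMVV} yield $a_n - 1/2 = O(1/n)$ and $b_n = O(1/n)$ for the recurrence coefficients of $\mu$ -- in fact, a full asymptotic expansion in powers of $1/n$, thanks to the analyticity of $h$ and strict positivity on $[-1,1]$. This polynomial rate of convergence to the Chebyshev coefficients comfortably exceeds what is needed to secure the universality conclusion across the entire mesoscopic range $0 < \alpha < 1$. The local regularity hypothesis, which the theorem demands only of one of the two measures, is evident for $\nu$, whose weight is real analytic on $(-1,1)$ and whose density of states is the explicit arcsine law $\tfrac{1}{\pi}(1-x^2)^{-1/2}$.

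The second and main step is to prove the CLT directly for the Chebyshev reference ensemble. Being determinantal with projection kernel $K_n$, the variance of the linear statistic equals
\begin{equation*}
\tfrac{1}{2}\iint \bigl(f(n^\alpha(x-x_0)) - f(n^\alpha(y-x_0))\bigr)^2 K_n(x,y)^2\,dx\,dy.
\end{equation*}
Substituting $u = n^\alpha(x-x_0)$, $v = n^\alpha(y-x_0)$ and inserting the bulk sine-kernel asymptotics for $K_n$ shows that, after absorbing the Jacobian $n^{-2\alpha}$, the integrand is asymptotically $(f(u)-f(v))^2 \sin^2(\pi n^{1-\alpha}\rho(x_0)(u-v))/(\pi^2(u-v)^2)$, where $\rho$ is the arcsine density. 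As $n \to \infty$ the oscillating $\sin^2$ factor averages to $1/2$, producing exactly \eqref{eq:CLTJacobivariance}; note that $\rho(x_0)$ drops out of the final expression. Asymptotic normality is then established by showing that the higher cumulants vanish: they are expressible as traces of $m$-fold products of the kernel against $f$, and decay thanks to the off-diagonal decay of the sine kernel and the $C^1$ regularity of $f$.

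The main obstacle I anticipate lies in this second step: uniformly controlling the cumulant integrals over the whole mesoscopic range $\alpha \in (0,1)$ while ensuring that the sine-kernel approximation extends cleanly from its microscopic regime of validity to the wider mesoscopic window $|x-x_0| \lesssim n^{-\alpha}$. Once the Chebyshev CLT is in hand, the universality theorem -- whose proof rests, as advertised in the abstract, on the Green's function analysis of the associated Jacobi matrices -- directly transfers both the Gaussian limit and the variance to the modified Jacobi ensemble.
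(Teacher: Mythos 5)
Your overall architecture matches the paper's: deduce the Jacobi CLT from the universality theorem by comparison to a constant-coefficient reference ensemble, verifying the recurrence-coefficient hypothesis via the Riemann--Hilbert asymptotics of \cite{KMVV} and the local regularity hypothesis from the explicit Chebyshev weight. The paper does precisely this, citing $a_n = 1/2 + O(n^{-2})$, $b_n = O(n^{-2})$ from \cite{KMVV} and appealing to Theorem \ref{thm:CLTgeneral}, which is in turn built on the comparison principle of Theorem \ref{th:comparison-general} and a direct CLT for the free case.

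Where you genuinely diverge from the paper is in how you propose to prove the CLT for the reference (free/Chebyshev) ensemble. You want to compute the variance by substituting the sine-kernel asymptotics for $K_n$ into the double integral and then to kill the higher cumulants by invoking ``off-diagonal decay of the sine kernel and $C^1$ regularity of $f$.'' The paper instead exploits the operator structure: for the free Jacobi operator the resolvent is an explicit Toeplitz-plus-Hankel operator; the Hankel part is discarded via Theorem \ref{th:comparison-general}; and the remaining Toeplitz cumulant-generating function is evaluated exactly using Ehrhardt's generalization of the Helton--Howe--Pincus formula and a Borodin--Okounkov/Basor--Widom-type identity (Section \ref{sec:CLTfree}). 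The Toeplitz route handles all cumulants simultaneously and cleanly, with the $n$-dependence of the symbol $\phi^{(n)}$ controlled by a trace-norm estimate (Lemma \ref{lem:smallnormproof}). Your kernel route would in principle work for the Chebyshev case since the CD kernel is explicit, but the higher-cumulant step is the genuine gap in your sketch: the cumulant formula \eqref{eq:cumulant} relies on a delicate cancellation, and merely bounding terms by kernel decay yields $O(1)$, not $o(1)$. Making the sine-kernel-plus-cancellation argument rigorous uniformly over all $\alpha\in(0,1)$ is exactly the type of work that Soshnikov carried out combinatorially for the CUE in \cite{S}; it is not shorter than, and would essentially re-derive, what the paper's Toeplitz machinery gives for free. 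So the proposal is structurally correct and in spirit faithful to the paper, but the central computation for the reference ensemble is both a different and an incompletely specified route.
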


The conclusion of the theorem  is striking for several reasons. First of all, in contrast to the independent variable case where a normalization of $\sqrt{n}$ is needed, Central Limit Theorems for these correlated systems generally hold without any normalization. This is due to the repulsion between the random points \cite{Diac}.  Note also that the limiting variance is scale invariant, in the sense that if $g(x)=f(px)$ then $\sigma_f^2=\sigma_g^2$. This seems to be connected to the independence of the variance from $\alpha$ as well. Finally,  the limiting variance in  \eqref{eq:CLTJacobivariance} does not depend on $\gamma_1$, $\gamma_2$ and $h$, making the theorem universal with respect to these parameters of the orthogonal polynomial ensemble. In fact,  it is believed (see for example \cite{P}) that \eqref{eq:CLTJacobi} holds for a large class of measures $\mu$ as long as $x_0$ is in the \emph{bulk}.  In the context of random matrix models similar results have been proved for the CUE and other classical compact groups  \cite{S}, GOE \cite{BdmV1}, symmetric Wigner matrices \cite{BdmV2}, GUE \cite{FKS}, in the context of Brownian motion \cite{DJ} and for Gaussian $\beta$-ensembles \cite{B}.

The purpose of the present paper is to prove \eqref{eq:CLTJacobi} and \eqref{eq:CLTJacobivariance} for a  wider class of measures, validating universality of the mesoscopic fluctuations for this class. We shall in fact show that whenever two measures are sufficiently `close' in a certain sense, their associated orthogonal polynomial ensembles have the same asymptotic fluctuations down to a corresponding mesoscopic scale. The approach that we follow is inspired by our earlier work \cite{BD-CLT} where we use the recurrence coefficients associated with the orthogonal polynomials associated with $\mu$ to understand the asymptotics of the \emph{macroscopic} fluctuations of the orthogonal polynomial ensemble.

Given a measure, $\mu$, we denote the ortho\emph{normal} polynomials with respect to $\mu$ by $\{p_n(x)\}_{n=0}^\infty$. Here each $p_n$ is a polynomial of degree $n$ and $$\int_\bbR p_n(x) p_m(x) {\rm d }\mu(x)=\delta_{nm},\qquad n,m=0,1,2\ldots$$
These polynomials famously satisfy a three term recurrence relation
$$x p_n(x)=a_{n+1} p_{n+1}(x)+b_{n+1} p_n(x) +a_{n} p_{n-1}(x),$$
with $a_n \geq 0$, $b_n \in \bbR$, and we assume that $a_0=0$. 
For later reference we introduce here the associated Jacobi matrix as well. This is simply the tridiagonal matrix
\beq \no
J_\mu=
\begin{pmatrix}
b_1 & a_1 & 0 & \dots \\
a_1 & b_2 & a_2 & \dots \\
0 & a_2 & b_3 & \dots \\
\vdots & \vdots & \vdots & \ddots
\end{pmatrix}
\eeq
which we view as an operator on $\ell^2(\bbN)$. For the intimate connection between the spectral theory of Jacobi matrices and the theory of orthogonal polynomials see, e.g., \cite{deift}.

Our first main result says that if the recurrence coefficients of $\mu$ and $\mu_0$ are asymptotically sufficiently close, then their respective orthogonal polynomial ensembles have the same fluctuations around a given point $x_0$ at a corresponding mesoscopic scale. More precisely, the difference of the moments  $X^{(n)}_{f,\alpha,x_0}$  with respect to the two different orthogonal polynomial ensembles tends to zero as $n\to \infty$, and $\alpha$ depends on the rate of approach of the recurrence coefficients to each other.

\begin{theorem} \label{thm:main-result}
Let $\mu$ and $\mu_0$ be two measures with finite moments and denote by $\{a_n, b_n\}_{n=1}^\infty$ and $\{a_n^0,b_n^0\}_{n=1}^\infty$ the respective associated recurrence coefficients. Let $x_0 \in \bbR$ be such that there exists a neighborhood $x_0 \in I$ on which the following two conditions are satisfied: \\
(i) $\mu_0$ restricted to $I$ is absolutely continuous with respect to Lebesgue measure and its Radon-Nikodym derivative is bounded there.  \\
(ii) The orthonormal polynomials for $\mu_0$ are uniformly bounded on $I$.\\
Assume further that
\beq \label{eq:decay-rate-condition}
a_n-a_n^0=\mathcal O(n^{-\beta}), \qquad b_n-b_n^0=\mathcal O(n^{-\beta}),
\eeq
as $n \to \infty$ for some $1>\beta>0$.

Then for any $f\in C_c^1(\bbR)$ and any $0<\alpha<\beta$ we have
\beq \label{eq:conclusion}
\left|\bbE \left(X^{(n)}_{f,\alpha,x_0}-\bbE X^{(n)}_{f,\alpha,x_0} \right)^m -\bbE_0 \left(X^{(n)}_{f,\alpha,x_0}-\bbE_0 X^{(n)}_{f,\alpha,x_0}\right)^m\right| \rightarrow 0
\eeq
as $n \rightarrow \infty$. Here $\bbE$ and $\bbE_0$ denote the expection with respect the orthogonal polynomial ensemble corresponding to $\mu$ and $\mu_0$ respectively.
\end{theorem}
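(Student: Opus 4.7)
The plan is to reduce the difference of centered moments in \eqref{eq:conclusion} first to differences of cumulants of the two ensembles, then to differences of traces of operator products involving the Jacobi matrices $J_\mu$ and $J_{\mu_0}$, and finally to matrix-element estimates for their Green's functions. This follows the philosophy of the authors' earlier work \cite{BD-CLT} on macroscopic fluctuations, now pushed to mesoscopic scales. As a first step, note that every centered moment of order $m$ is a universal polynomial in the cumulants $\kappa_2, \ldots, \kappa_m$ of $X^{(n)}_{f,\alpha,x_0}$, so \eqref{eq:conclusion} would follow from (a) a bound of the form $|\kappa_j^\mu|, |\kappa_j^{\mu_0}| \leq C_j (\log n)^{c_j}$ for each fixed $j$, together with (b) the sharper estimate $|\kappa_j^\mu - \kappa_j^{\mu_0}| \to 0$ at a rate beating any power of $\log n$.

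Because an orthogonal polynomial ensemble is determinantal with projection kernel $K_n(x,y) = \sum_{k=0}^{n-1} p_k(x) p_k(y)$, a standard formula due to Soshnikov expresses $\kappa_j$ as a finite linear combination of traces of products of $K_n$ alternating with multiplication operators by powers of $f_n(x) := f(n^\alpha(x-x_0))$. Passing to the $\ell^2(\bbN)$ realization in which $J_\mu$ is multiplication by $x$ in the orthonormal polynomial basis, $K_n$ becomes the projection $P_n$ onto the first $n$ coordinates and multiplication by $f_n$ becomes $f_n(J_\mu)$. The identity $A^l - B^l = \sum_{k=0}^{l-1}A^k(A-B)B^{l-1-k}$ then reduces $\kappa_j^\mu - \kappa_j^{\mu_0}$ to a finite sum of traces of the shape $\Tr\bigl(\mathcal A\,[f_n(J_\mu) - f_n(J_{\mu_0})]\,\mathcal B\bigr)$ with $\mathcal A$ and $\mathcal B$ bounded products of $P_n$ and $f_n$ applied to one of the two Jacobi matrices.

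Next, since $f \in C_c^1$, I would choose an almost-analytic extension $\ti f_n$ of $f_n$, supported in a strip of width $\sim n^{-\alpha}$ around the real axis, with $|\bar\partial \ti f_n(z)|$ vanishing to any prescribed order as $\Im z \to 0$, and apply Helffer--Sjöstrand together with the second resolvent identity to obtain
\begin{equation*}
f_n(J_\mu) - f_n(J_{\mu_0}) = -\frac{1}{\pi}\int_{\bbC}\bar\partial \ti f_n(z)\,(J_\mu - z)^{-1}(J_{\mu_0}-J_\mu)(J_{\mu_0}-z)^{-1}\,dm(z).
\end{equation*}
The comparison is thereby transferred entirely to matrix entries of the two resolvents for $z$ in a small box around $x_0$ at height $\sim n^{-\alpha}$. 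Hypotheses (i) and (ii) enter here: they yield sharp pointwise control of the Christoffel--Darboux kernel of $\mu_0$ near $x_0$, and hence of the matrix entries of $(J_{\mu_0}-z)^{-1}$ in that window. Together with $|a_k - a_k^0| + |b_k - b_k^0| = O(k^{-\beta})$, which makes the entries of $J_\mu - J_{\mu_0}$ at indices $k \sim n$ of size $O(n^{-\beta})$, one aims for a global bound of order $n^{\alpha - \beta}$ times powers of $\log n$, which goes to $0$ since $\alpha < \beta$.

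The main obstacle is the last step. The naive operator-norm estimate $\|(J-z)^{-1}\| \leq 1/|\Im z|$ produces two factors of $n^\alpha$ against only one factor $n^{-\beta}$, which is too weak as soon as $\alpha > \beta/2$; one must upgrade this by exploiting off-diagonal decay of the matrix entries of the resolvent near $x_0$. In addition, the low-index entries of $J_\mu - J_{\mu_0}$ are only $O(1)$, not $O(n^{-\beta})$, so their contribution has to be treated separately by observing that they see the mesoscopic window $\{|x - x_0| \lesssim n^{-\alpha}\}$ only through orthonormal polynomials of fixed low degree, whose integrals against the rescaled $f_n$ are themselves $O(n^{-\alpha})$ because $\mu_0$ is absolutely continuous with bounded density there. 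Stitching these pointwise Green's function estimates together, uniformly along the Helffer--Sjöstrand contour and summed over the finitely many terms produced by the cumulant expansion, is the core of the technical work.
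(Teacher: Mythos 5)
Your overall blueprint (moments $\to$ cumulants $\to$ traces of products of $P_n$ and $f_n(J)$ $\to$ resolvent comparison, via Combes--Thomas localization near the ``corner'' index $n$, with conditions~(i) and~(ii) used to control the spectral measure of $\mu_0$) is aligned with the paper's strategy in Sections~2--3, but the two places where the real work happens are not closed, and the proposed Helffer--Sj\"ostrand route for $f\in C_c^1$ has a concrete obstruction.

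\textbf{(1) Regularity required by Helffer--Sj\"ostrand.} You assert you can build an almost-analytic extension $\tilde f_n$ with $|\bar\partial\tilde f_n(z)|$ ``vanishing to any prescribed order'' as $\Im z\to 0$. That is only possible for $f\in C_c^\infty$: for $f\in C_c^1$ the best available bound is $|\bar\partial \tilde f| = O(|\Im z|)$. Since the second resolvent identity introduces \emph{two} resolvents, each of operator norm $\sim 1/|\Im z|$, the Helffer--Sj\"ostrand integrand is $\sim 1/|\Im z|$ and the integral diverges logarithmically. The paper sidesteps this entirely: rather than going through contour/quasi-analytic representations of $f$, it proves the comparison directly for $f = \sum_j c_j\,\Im(x-\eta_j)^{-1}$ (Theorem~\ref{thm:2.1-for-Green}), for which $f(J)$ is a genuine linear combination of imaginary parts of resolvents, and then extends to $C_c^1$ by approximation in the weighted Lipschitz norm $\|\cdot\|_{\mathcal L_w}$. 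The extension step is not soft: it needs a uniform-in-$n$ variance bound $\Var X^{(n)}_{f,\alpha,x_0}\leq c\|f\|_{\mathcal L_w}^2$ (Propositions~\ref{prop:continuityvariance1} and~\ref{prop:continuityvariance2}, which again use (i)+(ii) through the formula $\Im(G_0)_{jj}=\Im\lambda\sum_k|(G_0)_{jk}|^2$), combined with the continuity estimate for the moment generating function in Lemma~\ref{lem:continuitymoments}. None of this appears in your proposal.

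\textbf{(2) The quantitative gain that beats $n^{2\alpha}$ vs.\ $n^{-\beta}$.} You correctly flag that the naive bound $\|G\|_\infty\leq n^\alpha/\Im\eta$ used twice against a single factor $n^{-\beta}$ is insufficient for $\alpha>\beta/2$, but ``off-diagonal decay near $x_0$'' is not what closes the gap. Combes--Thomas decay only localizes the relevant indices to a window of width $O(n^\beta)$ around the corner index $n$; it does not by itself reduce the power of $n^\alpha$. The paper's actual improvement (Proposition~\ref{Jacobi-perturbation}) is a Hilbert--Schmidt estimate:
\begin{equation*}
\bigl\|\widetilde P^1_n\,G_0(\lambda_n)\bigr\|_2^2 \;=\; \frac{n^\alpha}{\Im\eta}\sum_{j\approx n}\Im\bigl(G_0(\lambda_n)\bigr)_{j,j},
\end{equation*}
and conditions (i)+(ii) are precisely what control $\sum_{j\approx n}\Im(G_0)_{j,j}=O(n^\beta)$, giving $\|\widetilde P^1_n G_0\|_2\|G_0\widetilde P^1_n\|_2 = O(n^{\alpha+\beta})$; multiplied by $\|(J_0-J)\widetilde P^2_n\|_\infty = o(n^{-\beta})$ and divided by $n^\alpha$ this produces the needed $o(1)$. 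This density-of-states style argument, not ``pointwise control of the Christoffel--Darboux kernel,'' is the engine of the proof, and your write-up does not identify it.

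\textbf{(3) Localization at the corner and the low-index issue.} Your concern about the low-index entries of $J_\mu-J_{\mu_0}$ being only $O(1)$ is a symptom of not using the subtracted cumulant formula \eqref{eq:cumulant}. In the paper, the difference $\Tr(F P_n\cdots F P_n)-\Tr(F^m P_n)$ vanishes identically unless an intermediate index exceeds $n$; combined with $n^\beta$-approximate bandedness of $F/n^\alpha$ this localizes the entire expression to indices in $[n-2mn^\beta,\,n+2mn^\beta]$ up to exponentially small error (Theorem~\ref{th:comparison-general}). On that window the recurrence coefficients are within $O(n^{-\beta})$, so the low-index issue never arises. If you work with the unsubtracted Soshnikov trace formula and Helffer--Sj\"ostrand, you lose this cancellation and must argue separately about the bulk indices as you propose, which is a genuinely harder and different route; you would need to actually carry out the $O(n^{-\alpha})$ estimate you allude to, uniformly in $j$ and along the whole contour, which is not obviously true.
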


Note that this theorem is a true `universality' result: without specifying the limiting distribution (if it exists at all), the conclusion of the theorem is that the moments of the fluctuation are the same for all OPE's that share the same asymptotic behavior for the associated recurrence coefficients. 

A few remarks are in order:

\begin{remark}
A different way of thinking about Theorem \ref{thm:main-result} is as a stability result. It gives the conditions under which a perturbation of a Jacobi matrix does not change the asymptotics of the associated orthogonal polynomial ensemble on a particular scale. Stability of universality in the microscopic scale under perturbations of the recurrence coefficients has been recently studied in \cite{BLS}. We note that the conditions given here on the perturbation are restricted to its rate of decay. It is an interesting problem to consider if one may get a stronger conclusion by imposing more regularity on the perturbation (such as monotonicity).
\end{remark}

\begin{remark}
Condition (ii) above is slightly stronger than the one we actually need, which is a boundedness condition on certain Cesaro averages of polynomials on shrinking neighborhoods of $x_0$ (see Proposition \ref{Jacobi-perturbation} and in particular \eqref{eq:measure-condition}). However, in many sufficiently nice cases, this stronger condition is known to hold. In fact, for the modified Jacobi Unitary Ensembles \eqref{eq:JacobiWeight} one can even compute precise asympotics for the orthogonal polynomials using Riemann-Hilbert techniques, as was done in \cite[Theorems 1.6 and 1.12]{KMVV}, which prove the boundedness of the polynomials. 

The phrasing in condition (ii) also emphasizes the interesting connection to  the natural problem of bounding orthogonal  polynomials in a general context. The connection between the boundedness of orthogonal polynomials and absolute continuity of $\mu$ has been quite extensively studied in both the orthogonal polynomial and Schr\"odinger operator community. In fact, boundedness of both the orthogonal and second-kind orthogonal polynomials implies absolute continuity of the measure $\mu$ and the bound on the polynomials even gives a bound on the Radon-Nikodym derivative of the measure. This has been known for some time now \cite{JL, Pearson, Simon-Boundedness}. The reverse direction, however, was recently shown to be false (see \cite{rakhmanov} for discussions of the related Steklov conjecture, and \cite{Avila} for a counterexample to the associated Schr\"odinger conjecture). 
\end{remark}

\begin{remark} \label{rem:properties-of-mu}
Finally, although this theorem refers to the mesoscopic scales, and thus is a `local' result, we note that $a_n-a_n^0 \rightarrow 0$ and $b_n-b_n^0 \rightarrow 0$ imply that the supports of $\mu_0$ and $\mu$ coincide up to at most a countable discrete set (i.e., their \emph{essential} supports coincide). This follows immediately from Weyl's theorem on the essential spectrum of self-adjoint operators \cite{reed-simon1} applied to $J_0$ and $J$. Furthermore, although no apriori information is given on the regularity of $\mu$, for sufficiently large $\beta$ some regularity may be deduced. In particular, if $\beta>1$, $\mu$ will be purely absolutely continuous precisely where $\mu_0$ is \cite{RS3}. If $\beta>1/2$ and $\{a_n^0, b_n^0\}_{n}$ are constant or periodic then it is known that one may define a set where the Radon-Nikodym derivatives of both $\mu$ and $\mu_0$ with respect to Lebesgue measure are simultaneously finite and positive and both vanish outside this set \cite{dk,killip} (this is conjectured to be true for general bounded $a_n^0$ and $b_n^0$ \cite{kls}). For $\beta \leq 1/2$ there is no guarantee that $\mu$ has any absolutely continuous component, and in fact $\mu$ may even be a pure point measure.  
\end{remark}

A particular consequence of Theorem \ref{thm:main-result} is  that if we can prove a mesoscopic CLT for a given special measure $\mu_0$, then we have a mesocopic CLT for any orthogonal polynomial ensemble that can be compared to that special case. We will prove such a CLT for the case of ${\rm d}\mu_0(x)=\frac{\sqrt{4-x^2}}{2\pi} \chi_{[-2,2]}(x){\rm d}x$ which corresponds to $a_n^0=1$ and $b_n^0=0$. This case, for which the orthogonal polynomials are the (rescaled) Chebyshev polynomials of the second kind, is known as the `free case' because of the fact that the associated Jacobi matrix is the free Laplace operator on $\ell^2(\bbN)$. In this case, the resolvent $\left(J_0-z\right)^{-1}$ is an exponentially decaying perturbation of a Toeptliz matrix. We shall use the connection of Toeplitz and Fredholm determinants in proving the CLT directly for this free case. Combined with Theorem \ref{thm:main-result}, this then leads to the following theorem:

\begin{theorem}\label{thm:CLTgeneral}
Suppose there exists $a>0,b\in \bbR$ and $0<\beta<1$  such that 
$$a_n=a+\mathcal O(n^{-\beta}), \qquad b_n=b+\mathcal O(n^{-\beta}),$$
as $n\to \infty$.   Then for any  $f\in C_c^1(\bbR)$, $x_0\in (b-2a,b+2a) $  and $0<\alpha<\beta$ we have that 
\begin{equation}\label{eq:CLTgeneral}
X^{(n)}_{f,\alpha,x_0}-\mathbb E X^{(n)}_{f,\alpha,x_0} \to N(0, \sigma_f^2),
\end{equation}
where 
\begin{equation}\label{eq:CLTgeneralvariance} 
\sigma_f^2= \frac{1}{4 \pi^2} \iint \left( \frac{f(x)-f(y)}{x-y} \right)^2 {\rm d} x {\rm d} y.
\end{equation}
\end{theorem}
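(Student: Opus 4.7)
The strategy is to derive Theorem \ref{thm:CLTgeneral} from Theorem \ref{thm:main-result} by comparing $\mu$ with a judicious reference measure, and then to prove the CLT directly for that reference measure. First I would reduce to the case $a=1$, $b=0$ by the affine change of variables $y=(x-b)/a$. Under the pushforward $T_*\mu$ the Jacobi matrix becomes $(J_\mu-b)/a$ with recurrence coefficients $(a_n/a,(b_n-b)/a)$, still satisfying the hypothesis but with the limits replaced by $(1,0)$; and the mesoscopic linear statistic becomes $\sum_j \tilde f(n^\alpha(y_j-\tilde x_0))$ with $\tilde f(u)=f(au)$, $\tilde x_0=(x_0-b)/a\in(-2,2)$. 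A direct substitution $p=au$, $q=av$ in \eqref{eq:CLTgeneralvariance} shows that $\sigma_{\tilde f}^2=\sigma_f^2$, so I may assume throughout that $a=1$, $b=0$.

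As reference measure I take the semicircle law $d\mu_0(x)=\frac{1}{2\pi}\sqrt{4-x^2}\,\chi_{[-2,2]}(x)\,dx$, whose recurrence coefficients are identically $a_n^0=1$, $b_n^0=0$. Its orthonormal polynomials are the Chebyshev polynomials of the second kind $p_n(x)=U_n(x/2)$, and from the identity $U_n(\cos\theta)=\sin((n+1)\theta)/\sin\theta$ one reads off that $|p_n|\le 1/\sin\theta_0$ uniformly in $n$ on any compact subinterval $I\subset(-2,2)$ containing $x_0$; the density is manifestly bounded there as well. Hence both hypotheses (i) and (ii) of Theorem \ref{thm:main-result} hold, and that theorem yields the convergence of all centered moments of $X^{(n)}_{f,\alpha,x_0}$ under $\bbE$ and $\bbE_0$ to the same limit. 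Since a Gaussian distribution is determined by its moments, it suffices to establish the CLT \eqref{eq:CLTgeneral}-\eqref{eq:CLTgeneralvariance} for $\mu_0$.

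For the free case $\mu_0$ the Jacobi matrix is $J_0=S+S^*$ on $\ell^2(\bbN)$, so $(J_0-z)^{-1}$ is an exponentially decaying perturbation of an explicit Toeplitz matrix, as indicated in the paragraph preceding the theorem. The OPE is determinantal with Christoffel-Darboux kernel, equivalent under the orthogonal-polynomial unitary to the projection $\pi_n$ onto the first $n$ basis vectors of $\ell^2(\bbN)$. The moment generating function is therefore
\[
\bbE_0 \exp\bigl(tX^{(n)}_{f,\alpha,x_0}\bigr)=\det_{\ell^2(\bbN)}\!\Bigl(I+\pi_n\bigl(e^{t g_n(J_0)}-I\bigr)\pi_n\Bigr), \qquad g_n(x):=f(n^\alpha(x-x_0)),
\]
whose logarithm expands as a cumulant series. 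The second cumulant equals $\kappa_2=\|\pi_n^\perp g_n(J_0)\pi_n\|_{HS}^2$, and after translating this trace via the unitary identification of $J_0$ with multiplication by $2\cos\theta$ on $L^2(S^1)$, a Szeg\H{o}-type calculation should identify its limit with $\frac{1}{4\pi^2}\iint\bigl(\frac{f(x)-f(y)}{x-y}\bigr)^2\,dx\,dy$. The analogous Fredholm/Toeplitz determinant machinery, applied to the higher-order traces, should force the cumulants of order $\ge 3$ to vanish as $n\to\infty$.

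The main obstacle lies in this last step. Szeg\H{o}'s classical limit theorem handles fixed (macroscopic) symbols, whereas here the symbol $g_n\circ(2\cos)$ oscillates on the shrinking scale $n^{-\alpha}$ around the preimage of $x_0$, so what is needed is a suitably uniform, localized Szeg\H{o}-type asymptotic. The delicate issue is quantifying the mesoscopic behavior of the commutator $[\pi_n,g_n(J_0)]$: the exponentially small Hankel boundary correction of the resolvent must be shown to contribute negligibly even at the shrinking scale, while the $C^1$ regularity and compact support of $f$ are what should tame the higher cumulants.
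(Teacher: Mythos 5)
Your global architecture is broadly right and agrees with the paper's: reduce to $a=1,\ b=0$ by an affine change of variables, take the semicircle law $\mu_0$ as reference, check conditions (i) and (ii) via the explicit Chebyshev polynomials, invoke the comparison theorem, and then prove a CLT for the free case by exploiting the Toeplitz structure of $G_0(\lambda)$. All of this is exactly what the paper does. However, you have a genuine gap in the one place where the real work is: you never actually prove the CLT for the free Jacobi operator.

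The issue is not merely that your last paragraph is a sketch. It is that the route you propose --- apply Theorem \ref{thm:main-result} directly for $f\in C_c^1$ and then prove the free-case CLT directly for $f\in C_c^1$ --- requires establishing Szeg\H{o}-type asymptotics, trace-class control of $Q_n(R(t,\phi^{(n)})^{-1}-I)$, and vanishing of all higher cumulants, for a symbol $g_n\circ(2\cos\theta)$ whose Fourier coefficients decay only polynomially (since $f$ is only $C^1$). The paper avoids this by proving the free CLT \emph{only} for the special class $f(x)=\sum c_j\Im(x-\eta_j)^{-1}$, for which the Toeplitz symbol $\phi^{(n)}$ has \emph{exponential} Fourier decay on scale $n^{\alpha}$ (the Combes--Thomas bound \eqref{eq:freeCT}). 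That exponential decay is essential in the paper's proof: it drives the trace-norm estimates in Lemma \ref{lem:smallnormproof} (via "essentially banded" splitting), it guarantees the commutator $[T(\phi_+),T(\phi_-)]$ is trace class with uniformly controlled norm, and it makes the Borodin--Okounkov/Ehrhardt factorization \eqref{eq:borodinokounkov} with $\det R(t,\phi)=\exp\bigl(\tfrac{t^2}{2}\Tr H(\phi)H(\widetilde\phi)\bigr)$ produce a clean Gaussian limit. Only afterwards does the paper pass to $f\in C_c^1$, and it does so not by redoing the Toeplitz analysis but by approximation: the weighted Lipschitz space $\mathcal L_w$, the continuity of the generating function in $\|\cdot\|_{\mathcal L_w}$ (Lemma \ref{lem:continuitymoments}), the uniform variance bound $\Var X^{(n)}_{f,\alpha,x_0}\le c\|f\|_{\mathcal L_w}^2$ (Propositions \ref{prop:continuityvariance1}--\ref{prop:continuityvariance2}), and the Beurling--Wiener density argument (Lemma \ref{lem:fromlipschitztoC1c}) showing that linear combinations of $\Im(x-\eta_j)^{-1}$ are $\mathcal L_w$-dense in $C_c^1$.

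So the concrete gap is twofold. First, you have deferred the entire free-case CLT to ``a suitably uniform, localized Szeg\H{o}-type asymptotic'' and ``the analogous Fredholm/Toeplitz determinant machinery'' without identifying what these are: in the paper they are Ehrhardt's identity \eqref{eq:erhardt}, the expansion \eqref{eq:expansionAB} of ${\rm e}^{-A}{\rm e}^{A+B}{\rm e}^{-B}-I$, the identity \eqref{eq:borodinokounkov}, and Lemmas \ref{lem:smallnormproof}--\ref{lem:variancefree}, none of which you supply. Second, even granting those tools, your plan applies them to a $C^1$-rough symbol where the trace-class bounds you need are not available in the form the paper uses; to make your route work you would still have to first pass to resolvent-type $f$ with exponentially decaying Fourier coefficients, prove the CLT there, and then run the $\mathcal L_w$-approximation step --- which is exactly the paper's Corollary \ref{cor:CLTgeneralspecial} together with Section \ref{sec:extension}.
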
 
Theorem \ref{thm:JacobiCLT} follows from Theorem \ref{thm:CLTgeneral} and the results in \cite{KMVV}. Indeed, in the latter paper the authors compute various precise asymptotics for the orthogonal polynomials for the modified Jacobi Unitary Ensembles and their features, based on the Riemann-Hilbert approach, and  in particular, \cite[Theorem 1.10]{KMVV} 
$$a_n=1/2+ \mathcal O(n^{-2}), \text{  and  } b_n= \mathcal O(n^{-2}),$$
as $n \to \infty$.

As remarked above (see Remark \ref{rem:properties-of-mu}), the condition that the recurrence coefficients have a limit implies that the essential support of $\mu$ consists of a single interval, and if $\beta>1$ we may also deduce that $\mu$ is purely absolutely continuous there. This however does not imply that \emph{any} purely absolutely continuous measure $\mu$ supported on an interval satisfies the conclusion of Theorem \ref{thm:CLTgeneral}. Although it is known \cite[Theorem 1.4.2]{SimonSzego} that the recurrence coefficients for such a measure will have a limit, our analysis requires the knowledge of the rate of approach as well. Whether our condition on the decay rate may be weakened by combining them with regularity properties of $\mu$ is a problem we currently leave open.

It is known that the one-interval assumption on the support of the measure $\mu$ is necessary for a macroscopic Central Limit Theorem (i.e. $\alpha=0$) \cite{Shch}. However, on the mesoscopic scale, the number of intervals in the support is believed to be irrelevant. Hence it should be possible to extend Theorem \ref{thm:CLTgeneral} to the case of multi-interval case, for which it is sufficient by Theorem \ref{thm:main-result} to  prove a Central Limit Theorem for a particular multi-cut case. To the best of our knowledge, no such example exists and we leave this as an open problem.

As remarked above, our approach is inspired by our work in \cite{BD-CLT} where we deduce a macroscopic CLT for polynomial $f$ by studying the asymptotics of the recurrence coefficients associated with $\mu$. The main idea is that each cumulant of $X_f^{(n)}$ may be expressed as the trace of a linear combination of powers of $f\left(J_\mu \right)$ multiplied by $P_n$, where $P_n$ is the projection onto the first $n$ coordinates in $\ell^2$ (see \eqref{eq:defPn} below). What makes the analysis go through in that case is the fact that for polynomial $f$, $f(J)$ is a banded matrix, which then implies that only a finite number of recurrence coefficients determine each cumulant. 

The issue with using this approach in the mesoscopic scale is the fact that polynomial $f$ are inappropriate for studying this scale. Rather, to study the mesoscopic scale one needs an $f$ that decays at infinity. The problem is that for such $f$, $f(J)$ will no longer be banded. However, for $f(x)=\frac{1}{x-(x_0+\eta)}$ with $\Im \eta>0$ the Combes-Thomas estimate (see Proposition \ref{prop:CT} below) says that the entries of $f(J)$ decay exponentially with their distance from the main diagonal. Thus, $f(J)$ is `effectively' banded, which leads to the hope that with some technical effort it should be possible to apply the strategy described above to deduce a CLT for $f$. That this is indeed the case is demonstrated and exploited in Section \ref{sec:comparison}.

The rest of the paper is structured as follows. In Section \ref{sec:pre} we discuss the cumulant approach and mention some results of our previous works \cite{BD-Nevai,BD-CLT} that we will need here. Some preliminaries from operator theory, including the Combes-Thomas estimate, are described in Section \ref{sec:pre} as well. In Section \ref{sec:comparison} we prove Theorem \ref{thm:main-result} in the case $f(x)= \sum_{j=1}^N c_j \Im (x-\eta_j)^{-1}$. For that same class of functions Section \ref{sec:CLTfree} has both a proof of a Central Limit Theorem for the free case (i.e., $a_n=1$ and $b_n=0$), and of Theorem \ref{thm:CLTgeneral}. Finally, in Section \ref{sec:extension} we extend our  results so that they hold for $f\in C^1_c(\bbR)$.


\section{Preliminary considerations on cumulants}\label{sec:pre}

In this section we fix some notation and recall some basic facts regarding cumulants for linear statistics. We also review some results from our earlier works \cite{BD-Nevai,BD-CLT} that will be needed later on. Finally, we formulate the classical Combes-Thomas estimate \cite{CT} that is central to everything that follows. 

\subsection{Cumulants}
We recall that for  a fixed measure, $\mu$, we consider the corresponding orthogonal polynomial ensemble \eqref{eq:defOPE} and study the asymptotic behavior, as $n\to \infty$, of the linear statistic
\begin{equation}
X^{(n)}_{f,\alpha,x_0}= \sum_{j=1}^n f(n^\alpha(x_j-x_0)).
\end{equation}
where $f: \bbR \rightarrow \bbC$ is a bounded function that decays at $\pm \infty$. The sum above is over the points of the ensemble, and the decay of $f$ at $\pm \infty$ implies that small weight is given to points that are at a distance greater than $\sim n^{-\alpha}$ from $x_0$.   

The proof of Theorems \ref{thm:JacobiCLT}, \ref{thm:main-result} and \ref{thm:CLTgeneral} follow from an analysis of the cumulants. The cumulants $\mathcal C^{(n)}_m(X^{(n)}_{f,\alpha,x_0})$ of the linear statistic $X^{(n)}_{f,\alpha,x_0}$ are defined by the following generating function
\begin{equation}
\log \bbE [{\rm e}^{{\rm i} t  X^{(n)}_{f,\alpha,x_0}}]= \sum_{m=1}^\infty t^m \mathcal C^{(n)}_m \left(X^{(n)}_{f,\alpha,x_0}  \right).
\end{equation}
It is not difficult to see that  the $m$-th  cumulant can be expressed in terms of the first $m$ moments and, vice versa, the $m$-th moment can be expressed in terms of the first $m$ cumulants.  Therefore in order to prove that the difference of the moments tends to zeros in Theorem \ref{th:comparison-general},  it is enough to show that the the difference of the cumulants tends to zero. To prove a  Central Limit Theorem we need to show that $$\mathcal C^{(n)}_m \left(X^{(n)}_{f,\alpha,x_0} \right)\to 0$$ for $m>2$ and that $\mathcal C_2^{(n)}\left( X^{(n)}_{f,\alpha,x_0}\right) $ has a limit as $n \to \infty$.

\subsection{Determinantal structure and a first rewriting} \label{sec:DetStruct}

It is well-known that the OPE of size $n$ with reference measure $\mu$ defines a determinantal point process whose kernel is the Christoffel-Darboux kernel associated with $\mu$:
\beq \no
K_n(x,y)=\sum_{j=0}^{n-1}p_j(x)p_j(y)
\eeq
where the $p_j$'s are the orthonormal polynomials associated with $\mu$. This is the kernel of the projection onto the space of polynomials of degree $\leq n-1$. The determinantal structure of the orthogonal polynomial ensemble says that
\beq \no
\mathbb{E}\left( \exp \left (t X^{(n)}_{f,\alpha,x_0} \right )\right)=\det \left(1+\left(e^{tf_{\alpha,x_0}^{(n)}}-1\right)K_n \right)_{\mathbb L^2(\mu)}
\eeq 
where we write 
\beq \no 
f_{\alpha,x_0}^{(n)}(x)= f(n^\alpha(x-x_0))
\eeq 
and the determinant on the right hand side is a Fredholm determinant of the integral operator on  $\mathbb L^2(\mu)$ with integral kernel $({\rm e}^{t f_{\alpha,x_0}^{(n)}(x)}-1)K_n(x,y)$, and $1$ stands for the identity operator. For an excellent review  on determinantal point processes (incuding a proof of this statement) we refer to \cite{J}. We will omit the notation $\mathbb L_2(\mu)$ in the index from now on. 

Since $\| f \|_\infty < \infty$ and $K_n$ is a bounded operator, we have that for $|t|$ sufficiently small 
\beq \no
\left \| \left( e^{tf_{\alpha,x_0}^{(n)}}-1 \right) K_n \right \|<1.
\eeq
Thus we may rewrite the determinant as a sum of traces
\beq \no
\begin{split}
\det(1+({\rm e}^{t f_{\alpha,x_0}^{(n)}}-1) K)&=\exp \Tr \log (1+({\rm e}^{t f_{\alpha,x_0}^{(n)}}-1) K)\\
&=\exp \sum_{j=1}^\infty \frac{(-1)^{j+1}}{j} \Tr \left( ({\rm e}^{t f_{\alpha,x_0}^{(n)}}-1) K\right)^j.
\end{split}
\eeq
By expanding the exponential in a Taylor series we obtain
\begin{multline}\no
\det(1+({\rm e}^{t f_{\alpha,x_0}^{(n)}}-1) K)\\=\exp \sum_{j=1}^\infty \frac{(-1)^{j+1}}{j}  \sum_{l_1,\ldots, l_j =1}^{\infty} t^{l_1+\cdots +l_j}\frac{\Tr  \left( f_{\alpha,x_0}^{(n)}\right)^{l_1} K \cdots \left(f_{\alpha,x_0}^{(n)} \right)^{l_j} K}{l_1!\cdots l_j!}, 
\end{multline}
which, by an extra reorganization, can be turned into
\begin{multline}\label{eq:standard:)}
\log \det(1+({\rm e}^{t f_{\alpha,x_0}^{(n)}}-1) K)\\
=\sum_{m=1}^\infty t^m\sum_{j=1}^m \frac{(-1)^{j+1}}{j}  \sum_{\overset{l_1+\cdots +l_j=m}{l_i\geq 1}} \frac{\Tr \left( f_{\alpha,x_0}^{(n)} \right)^{l_1} K \cdots  \left(f_{\alpha,x_0}^{(n)}\right)^{l_j} K}{l_1!\cdots l_j!}.
\end{multline}
By equating coefficients we have a standard formula for the cumulants of the linear statistic for a determinantal point process 
\beq \no
\begin{split}
&\mathcal{C}_m^{(n)}\left(X^{(n)}_{f,\alpha,x_0}  \right) \\
&\quad=\sum_{j=1}^{m}\frac{(-1)^{j+1}}{j} \sum_{l_1+\ldots+l_j=m, l_i \geq 1}\frac{\textrm{Tr}\left( f_{\alpha,x_0}^{(n)}\right)^{l_1}K_n\cdots \left( f_{\alpha,x_0}^{(n)}\right)^{l_j}K_n}{l_1! \cdots l_j!}.
\end{split}
\eeq 
We note that this formula was also the starting point in \cite{S} in the analysis of the fluctuations of mesosopic linear statistics  for the CUE. 

As in \cite{BD-Nevai} we will rewrite this formula by using the fact that for $m\geq 2$, 
\beq \no
\sum_{j=1}^m \frac{(-1)^{j+1}}{j}\sum_{l_1+\ldots+l_j=m, l_i\geq 1}\frac{1}{l_1!\cdots l_j!}=0,
\eeq 
(which follows from expanding the logarithm and exponential in $\log \left(1+\left(e^x-1 \right) \right)$),
to note that 
\beq \no
\sum_{j=1}^{m}\frac{(-1)^{j+1}}{j} \sum_{l_1+\ldots+l_j=m, l_i \geq 1}\frac{\textrm{Tr}\left(f_{\alpha,x_0}^{(n)} \right)^m K_n}{l_1! \cdots l_j!}=0.
\eeq 
This immediately implies that
\beq \label{eq:cumulant}
\begin{split}
&\mathcal{C}_m^{(n)}\left(X^{(n)}_{f,\alpha,x_0}  \right) \\
&\quad=\sum_{j=1}^{m}\frac{(-1)^{j+1}}{j} \sum_{l_1+\ldots+l_j=m, l_i \geq 1}\frac{\textrm{Tr}\left(f_{\alpha,x_0}^{(n)} \right)^{l_1}K_n\cdots \left(f_{\alpha,x_0}^{(n)} \right)^{l_j}K_n -\textrm{Tr}\left(f_{\alpha,x_0}^{(n)} \right)^m K_n}{l_1! \cdots l_j!},
\end{split}
\eeq 
for $m \geq 2$, which is the formula we want to use for the cumulants. The cancellation captured in this formula for the cumulants allowed us in \cite{BD-Nevai} to prove the following bound.

\begin{lemma}[Lemma 2.2 in \cite{BD-Nevai}] \label{lem:Concentration}
There exists a universal constant, $A>0$, such that for any bounded function $h: \bbR \rightarrow \bbR$ 
\beq \label{eq:boundonmoment}
\left|\log \bbE \left[{\rm e}^{ t (X_h^{(n)} - \bbE X_h^{(n)}) }\right]\right|\leq  A |t|^2 \Var X_h^{(n)},
\eeq
for $|t|\leq \frac{1}{3 \|h\|_\infty}$. 
\end{lemma}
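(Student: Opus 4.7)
\textbf{Reduction to cumulant estimates.} The quantity on the left of \eqref{eq:boundonmoment} is, formally, the cumulant generating function of the centered linear statistic and therefore admits the expansion $\sum_{m=2}^\infty t^m \mathcal{C}_m^{(n)}(X_h^{(n)})$, the $m=1$ contribution being killed by centering. Noting $\mathcal{C}_2^{(n)}(X_h^{(n)})=\Var X_h^{(n)}$, the claim \eqref{eq:boundonmoment} will follow at once if one can establish cumulant bounds of the form $|\mathcal{C}_m^{(n)}(X_h^{(n)})|\le D_m\,\|h\|_\infty^{m-2}\,\Var X_h^{(n)}$, with constants $D_m$ satisfying $\sum_{m\ge 2} D_m\,3^{-(m-2)} <\infty$; the prefactor $3$ is precisely what produces the constant in the hypothesis $|t|\le 1/(3\|h\|_\infty)$.

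\textbf{Commutator identity on the cumulant formula.} Applying \eqref{eq:cumulant} with $f_{\alpha,x_0}^{(n)}$ replaced by $h$, each $\mathcal{C}_m^{(n)}$ is a finite signed sum, with factorial denominators, of trace differences
\begin{equation}
T_{l_1,\ldots,l_j}:=\Tr\bigl(h^{l_1}K_n h^{l_2}K_n\cdots h^{l_j}K_n\bigr)-\Tr\bigl(h^m K_n\bigr),\qquad l_1+\cdots+l_j=m.
\end{equation}
For $j=2$, cyclicity of the trace and the projection identity $K_n^2=K_n$ yield the key formula
\begin{equation}
T_{l_1,l_2}=\Tr\bigl([K_n,h^{l_1}]\,[K_n,h^{l_2}]\,K_n\bigr),
\end{equation}
so $T_{l_1,l_2}$ already contains two commutator factors. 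An induction on $j$ built on this base case, using cyclicity and $K_n^2=K_n$ together with the substitution $h^{l_i}K_n= K_n h^{l_i}-[K_n,h^{l_i}]$, rewrites every $T_{l_1,\ldots,l_j}$ with $j\ge 2$ as a signed sum of traces each containing at least two commutator factors $[K_n,h^{l_i}]$, the remaining factors being either $K_n$ or powers of $h$.

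\textbf{Reduction to the variance.} A telescoping estimate gives $\|[K_n,h^l]\|_{HS}\le l\,\|h\|_\infty^{l-1}\,\|[K_n,h]\|_{HS}$, and the standard identity for projections $K_n$ reads $\Var X_h^{(n)}=\tfrac{1}{2}\|[K_n,h]\|_{HS}^2$. Applying H\"older for the operator, Hilbert--Schmidt and trace-class norms (i.e.\ $|\Tr(ABC)|\le \|A\|_{HS}\|B\|_{HS}\|C\|$) to each trace produced in the previous step, and using $\|K_n\|=1$ together with $\|h^l\|\le \|h\|_\infty^{l}$, each such trace is bounded by a combinatorial product of the $l_i$'s times $\|h\|_\infty^{m-2}\,\Var X_h^{(n)}$.

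\textbf{Combinatorial bookkeeping: the main obstacle.} The remaining and principal technical step is to control the growth of $D_m$. One must sum the bounds of the previous paragraph over compositions $l_1+\cdots+l_j=m$ with weights $1/(l_1!\cdots l_j!)$, over $j$ from $2$ to $m$, and over the various terms produced by the inductive commutator rewriting. The factorials in \eqref{eq:cumulant} absorb most of the combinatorial growth, and a careful count shows $D_m\le C\cdot c^{m-2}$ for a universal $C$ and some $c<3$; geometric summation then yields \eqref{eq:boundonmoment} precisely on the interval $|t|\le 1/(3\|h\|_\infty)$. (Alternatively, one may appeal to Soshnikov's cumulant estimates for linear statistics of determinantal processes, which deliver a bound of exactly this shape and bypass the direct combinatorial count.)
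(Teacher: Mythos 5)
Your top-level strategy --- bound the $m$-th (renormalized) cumulant by a constant times $\|h\|_\infty^{m-2}\Var X_h^{(n)}$ and then sum the resulting series on $|t|\le 1/(3\|h\|_\infty)$ --- is exactly the route the paper attributes to \cite{BD-Nevai}: immediately after the lemma the authors remark that $|\mathcal C_m^{(n)}|\le c_m\Var$ was established first, with \eqref{eq:boundonmoment} then obtained as a corollary. Your structural ingredients are also correct. The $j=2$ identity $T_{l_1,l_2}=\Tr\bigl([K_n,h^{l_1}][K_n,h^{l_2}]K_n\bigr)$ checks out (expand, use $K_n^2=K_n$, and apply cyclicity); the telescoping bound $\|[K_n,h^l]\|_{HS}\le l\|h\|_\infty^{l-1}\|[K_n,h]\|_{HS}$ is standard; $\Var X_h^{(n)}=\tfrac12\|[K_n,h]\|_{HS}^2$ follows from the integral kernel $K_n(x,y)(h(y)-h(x))$ of $[K_n,h]$; and the observation that $K_n[K_n,h^l]K_n=0$ is what makes the inductive reduction for $j>2$ actually close.

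However, the proposal contains a genuine gap, and it is precisely at the step you flag as ``the main obstacle.'' The inductive commutator rewriting produces a growing number $N_j$ of terms, each carrying polynomial factors $l_a l_b$ from the telescoping, and these must be summed against the weights $\tfrac{1}{j}\cdot\tfrac{1}{l_1!\cdots l_j!}$ over all compositions of $m$ and all $j\le m$. Producing a bound $D_m\le C\,c^{m-2}$ with $c<3$ (so that the series has radius of convergence $\ge 1/(3\|h\|_\infty)$) is the entire content of the lemma, and you simply assert it. The back-of-envelope arithmetic is not obviously in your favor: even without the inductive blow-up, $\sum_{j}\sum_{l_1+\cdots+l_j=m}\tfrac{1}{l_1!\cdots l_j!}$ already grows like $(\ln 2)^{-m}$, and once one layers on a geometric factor $N_j$ the effective radius can shrink below the required $1/3$ unless the count is done carefully. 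Your fallback --- invoking Soshnikov-type cumulant bounds --- is also not automatic, since one must check those bounds carry $m$-dependence sharp enough to sum geometrically on the prescribed interval. So you have correctly reconstructed the framework of the cited proof, but the quantitative estimate at its core is missing. (A small bookkeeping slip, harmless to the argument: with the paper's convention $\log\bbE[e^{itX}]=\sum_m t^m\mathcal C_m$, one has $|\mathcal C_2|=\Var/2$, not $\Var$.)
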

Note that this Lemma also implies that for $m\geq 2$, we have $$\left|\mathcal{C}_m^{(n)}\left(X^{(n)}_{f,\alpha,x_0}  \right)\right| \leq c_m \Var X^{(n)}_{f,\alpha,x_0},$$
for some constant $c_m$ that does not depend on $n$ (in fact, in \cite{BD-Nevai} the latter was proved first,   obtaining \eqref{eq:boundonmoment} as a corollary). Since the cumulants and the moments can be expressed in terms of each other, the same statement holds when we replace the cumulants with the moments of $X^{(n)}_{f,\alpha,x_0}-\bbE X^{(n)}_{f,\alpha,x_0}$. 

We will  need the following simple corollary on the continuity of the generating function (and hence the moments and the cumulants) as a function of $h$. 
\begin{lemma}\label{lem:continuitymoments} There exists a universal constant, $C>0$, such that for any bounded functions $g: \bbR \rightarrow \bbR$  and $h: \bbR \rightarrow \bbR$ 
\begin{multline}
\left|\bbE \left[{\rm e}^{ t (X_g^{(n)} - \bbE X_g^{(n)})} \right]-\bbE \left[{\rm e}^{ t (X_h^{(n)} - \bbE X_h^{(n)})} \right]\right|\\
\leq |t|(\Var X_{g-h}^{(n)})^{1/2} {\rm e}^{C |t|^2\left(\Var X_h^{(n)}+\Var X_{g-h}^{(n)}\right)},
\end{multline}
for $|t|\leq \min( \frac{1}{6 \|g\|_\infty},\frac{1}{6 \|h\|_\infty})$. 
\end{lemma}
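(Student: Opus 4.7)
The plan is to interpolate between $h$ and $g$ along the linear path $h_s := h + s(g-h)$, $s \in [0,1]$, so that the left-hand side of the lemma becomes $|\varphi(1)-\varphi(0)|$ for a smooth function $\varphi$, and then to bound $\varphi'(s)$ using Cauchy--Schwarz together with Lemma \ref{lem:Concentration}.

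Writing $Y_f := X_f^{(n)} - \bbE X_f^{(n)}$ for any bounded $f$, I would set
\[
\varphi(s) := \bbE\!\left[e^{t Y_{h_s}}\right], \qquad s \in [0,1].
\]
Since $f \mapsto X_f^{(n)}$ is linear, $Y_{h_s} = Y_h + s Y_{g-h}$; moreover $|Y_{g-h}|$ is bounded almost surely (by $2n\|g-h\|_\infty$), so differentiation under the expectation is legitimate and yields $\varphi'(s) = t\,\bbE[Y_{g-h}\,e^{t Y_{h_s}}]$. Cauchy--Schwarz then gives
\[
|\varphi'(s)| \leq |t|\,\bigl(\Var X_{g-h}^{(n)}\bigr)^{1/2}\,\bigl(\bbE\, e^{2tY_{h_s}}\bigr)^{1/2},
\]
which already exhibits the desired factor $(\Var X_{g-h}^{(n)})^{1/2}$.

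Next I would apply Lemma \ref{lem:Concentration} with symbol $h_s$ and parameter $2t$ to control the remaining exponential moment. This is precisely why the hypothesis of the lemma carries a factor of $6$ rather than $3$: since $\|h_s\|_\infty \leq \max(\|g\|_\infty,\|h\|_\infty)$, the assumption $|t|\leq\min(\tfrac{1}{6\|g\|_\infty},\tfrac{1}{6\|h\|_\infty})$ guarantees $|2t|\leq \tfrac{1}{3\|h_s\|_\infty}$ uniformly in $s\in[0,1]$. Lemma \ref{lem:Concentration} then yields $\bbE\, e^{2tY_{h_s}} \leq e^{4At^2\,\Var X_{h_s}^{(n)}}$. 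Using the triangle inequality in $L^2(\bbP)$, namely $(\Var X_{h_s}^{(n)})^{1/2}\leq (\Var X_h^{(n)})^{1/2}+(\Var X_{g-h}^{(n)})^{1/2}$, together with $(a+b)^2\leq 2(a^2+b^2)$, gives the uniform bound $\Var X_{h_s}^{(n)}\leq 2(\Var X_h^{(n)}+\Var X_{g-h}^{(n)})$.

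Inserting these estimates into the Cauchy--Schwarz bound and integrating over $s\in[0,1]$ produces the claimed inequality with, say, $C=8A$, where $A$ is the constant from Lemma \ref{lem:Concentration}. There is no real obstacle beyond this; the argument is a routine interpolation combined with the already-established concentration bound. The only slightly delicate point is the bookkeeping of constants: one must verify that the hypothesis of Lemma \ref{lem:Concentration} is uniformly satisfied along the interpolation \emph{and} with the parameter $2t$ (rather than $t$), which is what forces the factor of $6$ in the hypothesis.
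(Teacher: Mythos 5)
Your proof is correct and essentially reproduces the paper's argument: interpolate along $s \mapsto h + s(g-h)$, differentiate the generating function under the expectation, apply Cauchy--Schwarz to isolate $(\Var X_{g-h}^{(n)})^{1/2}$, and control the remaining exponential moment with Lemma \ref{lem:Concentration} applied at parameter $2t$ (hence the factor $6$ in the hypothesis). The paper's version parametrizes by $s\in[0,t]$ rather than $s\in[0,1]$ and keeps explicit track of $\Re t$ for complex $t$ after Cauchy--Schwarz, but these are only cosmetic differences.
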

\begin{proof}
We will use the notation 
$$R_n(f)= \bbE \left[{\rm e}^{  (X_f^{(n)} - \bbE X_f^{(n)})}\right].$$
We first write
\begin{multline}\label{eq:Rntming}
|R_n(t h)- R_n(t g)|=\left|\int_0^t \frac{{\rm d} }{{\rm d} s}  R_n(t h+s(g-h)) {\rm d} s\right|\\
 \leq \int_0^{t}  \left| \frac{{\rm d} }{{\rm d} s}  R_n(t h+s(g-h)) \right| {\rm d} s.
 \end{multline}
Now note, by the linearity of $f \mapsto X_f^{(n)}$, 
$$\frac{{\rm d} }{{\rm d} s}  R_n(t h+s(g-h))= \bbE \left[\left(X_{g-h}^{(n)} - \bbE X_{g-h} ^{(n)})\right){\rm e}^{ t (X_h^{(n)} - \bbE X_h^{(n)})+s (X_{g-h}^{(n)} - \bbE X_{g-h}^{(n)})} \right],$$
which, by applying the Cauchy-Schwarz inequality, implies that 
\beq \label{eq:Rnderiv} 
\left|\frac{{\rm d} }{{\rm d} s}  R_n(t h+s(g-h))\right| \leq \sqrt{\Var X_{h-g}^{(n)}} \sqrt{R_n( 2 \Re t h+  2 \Re s (g-h))}.
\eeq 
Moreover, by \eqref{eq:boundonmoment} and $2ab \leq |a|^2+|b|^2$ for $a,b \in \bbR$,
\begin{equation} \label{eq:rnths} 
|R_n(t h + s(g-h))| \leq {\rm e}^{2 A |t|^2 \Var X_{ h }^{(n)}+ 2 |s|^2 \Var X_{(g-h)} ^{(n)}}.
\end{equation}
By inserting \eqref{eq:rnths} in \eqref{eq:Rnderiv} and applying the result to \eqref{eq:Rntming} we get the statement.
\end{proof}

\subsection{Cumulants in terms of the Jacobi operator} \label{subsec:Jacobi}

The consideration in the previous paragraph are not special for OPE's but hold for general determinantal point processes that have a kernel that is a self-adjoint projection, see \cite{BD-Nevai}.  The additional structure for OPE's is exploited in \cite{BD-CLT} by using the spectral theorem to replace functions in \eqref{eq:cumulant} with matrices. Thus we have
\beq \label{eq:cumulant-spectral-form1}
\begin{split}
&\mathcal{C}_m^{(n)}\left(X^{(n)}_{f,\alpha,x_0}  \right) \\
&\quad=\sum_{j=1}^{m}\frac{(-1)^{j+1}}{j} \sum_{l_1+\ldots+l_j=m, l_i \geq 1}\frac{\textrm{Tr}\left(f_{\alpha,x_0}^{(n)}(J) \right)^{l_1}P_n\cdots \left(f_{\alpha,x_0}^{(n)}(J) \right)^{l_j}P_n -\textrm{Tr}\left(f_{\alpha,x_0}^{(n)} (J)\right)^m P_n}{l_1! \cdots l_j!},
\end{split}
\eeq 
where now $J$ is the Jacobi matrix associated with $\mu$ acting on $\ell_2(\bbN)$, and $P_n$ is the projection onto the first $n$ coordinates, i.e.,
\beq \label{eq:defPn}
P_n v (j)=\left \{ \begin{array}{cc} v(j) & j \leq n \\
0 & \textrm{otherwise.} \end{array} \right.
\eeq
The trace in \eqref{eq:cumulant-spectral-form1} is taken in $\ell^2$. The benefit of \eqref{eq:cumulant-spectral-form1} is the fact that $J$ is banded (in fact, tri-diagonal) and therefore so is $f(J)$ whenever $f$ is a polynomial. For banded $f(J)$ it is simple to see that the cumulants only depend on a finite number of entries of $f(J)$ and, since $f$ is a polynomial, only on a finite number of entries $J$. Moreover, the number of relevant entries of $J$ does not depend in $n$. A comparison result similar to Theorem \ref{th:comparison-general} follows from these considerations almost immediately. To prove a universal Central Limit Theorem it is then enough to deal with an OPE for a special choice of $\mu$ for which a Central Limit Theorem is relatively straightforward.

This strategy, however, breaks down in the mesoscopic scales, since polynomials are not suitable test functions for these scales. To get around this, we turn to the resolvent of $J$. More precisely, for any fixed $N \in \bbN$, $\eta_1,\ldots,\eta_N \in \{\eta \in \bbC \mid \Im \eta>0\}$ and $c_1,\ldots,c_N \in \bbR$ we want to consider
\beq \label{eq:specialclass}
f(x)=f_{\eta_1,\ldots,\eta_N;c_1,\ldots,c_N}(x)=\sum_{j=1}^N c_j \Im\frac{1}{x-\eta_j} \quad \Im \eta_j >0.
\eeq

To write the cumulants of $X_{f,\alpha,x_0}^{(n)}$ for such $f$ using \eqref{eq:cumulant-spectral-form1} we use the following notation. For a Jacobi matrix, $J$, and $\lambda \in \bbC \setminus \bbR$, let
\beq \label{eq:defgreen}
G(\lambda)=G(J;\lambda)=\left(J-\lambda \right)^{-1}
\eeq
and 
\beq \label{eq:defImaggreen}
F(\lambda)=F(J;\lambda)=\Im G(J;\lambda)=\frac{1}{2i}\left( G(J;\lambda)-G(J;\overline{\lambda} )\right)
\eeq
where $J$ will be suppressed whenever there is no danger of confusion. We will also write 
\beq \label{eq:lincomImaggreen} F(\vec \lambda, \vec c) = \sum_{j=1}^N c_j F(\lambda_j)
\eeq
Letting $\lambda_j=x_0+\frac{\eta_j}{n^\alpha}$ we get from \eqref{eq:cumulant-spectral-form1} that
\beq \label{eq:cumulant-spectral-form}
\begin{split}
&\mathcal{C}_m^{(n)}\left(X_{f,\alpha,x_0}^{(n)} \right) \\
&\quad =\sum_{j=1}^{m}\frac{(-1)^{j+1}}{j n^{\alpha m}}\sum_{l_1+\ldots+l_j=m, l_i \geq 1}\frac{\textrm{Tr}F(\vec \lambda, \vec c)^{l_1}P_n\cdots F(\vec \lambda,\vec c)^{l_j}P_n -\textrm{Tr}F(\vec \lambda_n,\vec c_j )^m P_n}{l_1! \cdots l_j!}.
\end{split}
\eeq 
This is the starting point of our analysis in the next section. 

The benefit of working with  the resolvent is the following. Although $F(\vec \lambda, \vec c)$ is not banded, its diagonals decay exponentially in their distance from the main diagonal. This fact, known as the \emph{Combes-Thomas estimate}, holds for multidimensional Schr\"odinger operators as well and has extensive applications in spectral theory. Its method of proof goes back to a general argument of Combes and Thomas \cite{CT}. We present it here in the form most suited for our purposes, with a proof for completeness.

\begin{proposition} \label{prop:CT}
Let 
\beq \no
J=
\begin{pmatrix}
b_1 & a_1 & 0 & \dots \\
a_1 & b_2 & a_2 & \dots \\
0 & a_2 & b_3 & \dots \\
\vdots & \vdots & \vdots & \ddots
\end{pmatrix}
\eeq
be a Jacobi matrix with  $b_n \in \bbR$ and $0<a_n \leq A$ for some constant $A>0$. Then for any $\lambda \in \bbC$ with ${\rm Im} \lambda>0$ and any $n,m \in \bbN$
\beq \label{eq:CT-estimate}
\left| \left (G(\lambda) \right)_{n,m} \right| \leq \frac{2}{{\rm Im} z}e^{-\min \left(1, \frac{{\rm Im z}}{4e A}\right) |n-m|}.
\eeq
\end{proposition}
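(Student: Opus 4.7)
The approach is the classical Combes--Thomas trick of conjugating $J$ by a diagonal exponential weight. For a parameter $\alpha>0$ I would introduce the diagonal operator $D_\alpha$ on $\ell^2(\bbN)$ with $(D_\alpha)_{kk}=e^{\alpha k}$. A direct calculation shows that $D_\alpha J D_\alpha^{-1}$ is again tridiagonal, with the same diagonal entries $b_k$ as $J$ but with super-diagonal $a_k e^{-\alpha}$ and sub-diagonal $a_k e^{\alpha}$; equivalently, the perturbation $T_\alpha:=D_\alpha J D_\alpha^{-1}-J$ has only off-diagonal entries, with $(T_\alpha)_{k,k+1}=a_k(e^{-\alpha}-1)$ and $(T_\alpha)_{k+1,k}=a_k(e^\alpha-1)$.

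Each of the super- and sub-diagonal pieces of $T_\alpha$ is a weighted shift whose operator norm equals the supremum of the moduli of its nonzero entries, so $\|T_\alpha\|\leq A|e^\alpha-1|+A|1-e^{-\alpha}|\leq 2A\alpha e^\alpha$, using $|e^\alpha-1|\leq \alpha e^\alpha$ and $|1-e^{-\alpha}|\leq\alpha$. Since $D_\alpha J D_\alpha^{-1}-\lambda = (J-\lambda)+T_\alpha = (J-\lambda)(1+G(\lambda)T_\alpha)$ and $\|G(\lambda)\|\leq 1/\Im\lambda$, choosing $\alpha$ so that $2A\alpha e^\alpha\leq \Im\lambda/2$ ensures $\|G(\lambda)T_\alpha\|\leq 1/2$, and a Neumann series then gives $\|(D_\alpha J D_\alpha^{-1}-\lambda)^{-1}\|\leq 2/\Im\lambda$. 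The choice $\alpha:=\min(1,\Im\lambda/(4eA))$ achieves this in both regimes: when $\Im\lambda/(4eA)\leq 1$ one has $\alpha e^\alpha\leq \alpha\cdot e=\Im\lambda/(4A)$, and when $\Im\lambda/(4eA)>1$ one has $\alpha=1$ and $\alpha e^\alpha=e\leq \Im\lambda/(4A)$.

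Finally, since $(D_\alpha J D_\alpha^{-1}-\lambda)^{-1}=D_\alpha G(\lambda)D_\alpha^{-1}$, the matrix entries transform as $(D_\alpha G(\lambda)D_\alpha^{-1})_{n,m}=e^{\alpha(n-m)}G(\lambda)_{n,m}$. For $n\geq m$, bounding this entry by the operator norm of the conjugated resolvent yields $|G(\lambda)_{n,m}|\leq (2/\Im\lambda)e^{-\alpha(n-m)}$, which is exactly \eqref{eq:CT-estimate}; for $n<m$ the symmetry $G(\lambda)_{n,m}=G(\lambda)_{m,n}$, inherited from the fact that $J$ is real and symmetric, reduces to the previous case. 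The only real bookkeeping point—and hence the closest thing to an ``obstacle'' in an otherwise mechanical argument—is optimizing the trade-off between the growing norm $\|T_\alpha\|$ and the desired decay rate $e^{-\alpha|n-m|}$; the elementary bounds above together with the two-regime choice of $\alpha$ produce the constant $\min(1,\Im\lambda/(4eA))$ displayed in the statement.
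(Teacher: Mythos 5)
Your argument is correct and is essentially the same Combes--Thomas conjugation used in the paper: both proofs conjugate $J$ by the diagonal exponential weight, bound the norm of the resulting off-diagonal perturbation by $O(A\theta)$, and use $\|G(\lambda)\|\leq 1/\Im\lambda$ to conclude that the conjugated resolvent has norm at most $2/\Im\lambda$. The only cosmetic difference is that you invert $(J-\lambda)(1+G(\lambda)T_\alpha)$ via a Neumann series, whereas the paper writes the second resolvent identity $R^{-1}GR = G + R^{-1}GR\,(J-R^{-1}JR)\,G$ and solves the resulting self-referential norm inequality; these are equivalent.
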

\begin{proof}
Since $\left(G(\lambda) \right)_{n,m}=\left(G(\lambda) \right)_{m,n}$, we may assume without loss of generality that $n \leq m$. Let  $\theta =\min\left(1, \frac{{\rm Im} z}{4eA} \right)$ and define the diagonal matrix $R$ by
\beq \no
R_{j,j}=e^{\theta j}.
\eeq 
It follows that 
\beq \no
e^{\theta |n-m|} \left( G(\lambda) \right)_{n,m}=\left( R^{-1} G(\lambda) R \right)_{n,m},
\eeq
which implies that
\beq \label{eq:CT1}
\left|   \left( G(\lambda) \right)_{n,m} \right | \leq e^{-\theta |n-m|} \left \|  R^{-1} G(\lambda) R \ \right \|_\infty,
\eeq
where $\| \cdot \|_\infty$ is the operator norm.
Now note that
\beq \no
R^{-1}G(\lambda) R=R^{-1}\left(J-\lambda \right)^{-1} R=\left(R^{-1}  \left(J-\lambda \right) R\right)^{-1}
=\left( R^{-1} J R-\lambda \right)^{-1}
\eeq
so that, by the resolvent formula $A^{-1}-B^{-1}=A^{-1}\left(B-A \right)B^{-1}$,
\beq \no
R^{-1}G(\lambda) R=G(\lambda)+R^{-1}G(\lambda) R \left(J-R^{-1}JR \right) G(\lambda).
\eeq
Thus 
\beq \label{eq:CT2}
\left \| R^{-1}G(\lambda) R \right \|_\infty \leq \left \|G(\lambda) \right \|_\infty+ \left \|R^{-1}G(\lambda) R \right \|_\infty \left \| J-R^{-1}JR \right \|_\infty \left \| G(\lambda) \right \|_\infty.
\eeq
To finish the proof we note that
$
\left \| J-R^{-1}JR \right \|_\infty \leq 2 A \left|e^{\theta}-1 \right| \leq 2A e \theta 
$ and that, since $J$ is self-adjoint, $\left \| G(\lambda) \right \|_\infty \leq  \left( {\rm Im} \lambda \right)^{-1}$. Plugging these facts into \eqref{eq:CT2} we get
\beq \no
\left \| R^{-1}G(\lambda) R \right \|_\infty \leq \left( {\rm Im} \lambda \right)^{-1}+ \left \|R^{-1}G(\lambda) R \right \|_\infty \frac{2eA \theta}{ {\rm Im} \lambda },
\eeq
which implies, since $\frac{2eA \theta}{ {\rm Im} \lambda } \leq 1/2$, that
\beq \no
\left \| R^{-1}G(\lambda) R \right \|_\infty \leq \frac{2}{  {\rm Im} \lambda }.
\eeq
Plugging this into \eqref{eq:CT1} finishes the proof.
\end{proof}

The estimate \eqref{eq:CT-estimate} means that $F(\vec \lambda,\vec v)$ is, in a sense, approximately banded. It is this property that allows us to apply the strategy of \cite{BD-CLT} to $f$ as above. We demonstrate this in the next section.

\subsection{Further notation} \label{sec:operatorStuff}
We end this section by  discussing some notation that we will use. If $A$ is an operator on a (seperable) Hilbert space we let \\
(i) $\|A\|_\infty$ be the operator norm,\\
(ii) $\|A\|_1$ be the trace norm,\\
(iii) $\|A\|_2$ be the Hilbert-Schmidt norm.\\
It is standard that 
$$ \|AB\|_j= \|A\|_j \|B\|_\infty, \qquad  \|AB\|_j= \|A\|_\infty \|B\|_j,$$
for $j=1,2, \infty$. Moreover, 
$$\|AB\|_1\leq \|A\|_2 \|B\|_2.$$
It $A$ is trace class then 
$$|\Tr A| \leq \|A\|_1,$$
and 
$$\left|\det (I+A)-1\right|\leq \|A\|_1 \exp( \|A\|_1+1),$$
where the left-hand side is the Fredholm determinant. For further details on trace class and Hilbert-Schmidt operators we refer to \cite{Simon}.

In the upcoming analysis we will frequently use the following estimates: if $A= \left(A_{i,j}\right)_{i,j=1}^\infty$ is an infinite matrix then, viewed as an operator on $\ell_2(\bbN)$, we have 
\begin{align*}
\|A\|_\infty &\leq \sum_{j=-\infty}^\infty \sup_k |A_{k,k+j}|,\\
\|A\|_1 &\leq \sum_{i,j=1}^\infty |A_{i,j}|\\
\|A\|_2 &=\left( \sum_{i,j=1}^\infty |A_{i,j}|^2\right)^{1/2}.
\end{align*}



\section{Universality of the cumulants} \label{sec:comparison}
In this section we discuss a comparison principle for the cumulants for mesoscopic linear statistics for two different OPE's. In particular, we will prove Theorem \ref{thm:main-result} for functions of the form \ref{eq:specialclass}. The proof of the complete statement then follows by extending this class of function to $C^1_c(\bbR)$, which  will be done in Section \ref{sec:extension}.
\subsection{A general comparison principle}

Since we want to apply the comparison principle in a slightly more general setting than that of resolvents, we first formulate precisely the hypothesis we need.

\begin{hypothesis}\label{Hypothesis1}
Given $0<\alpha<1$, we focus on matrix sequences, $\{\mathcal A^{(n)}\}_{n=1}^\infty$, satisfying the following two conditions: \\
{\bf 1.} The matrices $\{\mathcal A^{(n)}\}_{n=1}^\infty$ are uniformly bounded in operator norm. \\  
{\bf 2.} For some constants $C, C'>0$, independent of $n$,
\beq \label{eq:alpha-exp-decay}
\left | \mathcal A^{(n)}_{r,s}\right| \leq C e^{-\frac{C'|r-s|}{n^\alpha}}.
\eeq
We shall say that a matrix sequence satisfies Hypothesis \ref{Hypothesis1} (for $\alpha$), if these two conditions are satisfied.  
\end{hypothesis}

 For a matrix $\mathcal{A}$ we shall abuse notation and write
\beq  \label{eq:newcumu}
\calC_m^{(n)}(\mathcal A) = \sum_{j=2}^m \frac{(-1)^j}{j} \sum_{\overset{l_1+ l_2+ \cdots l_j=m}{ l_i\geq 1}} \frac{\textrm{Tr} \mathcal A^{l_1} P_n\cdots \mathcal A^{l_j} P_n-\textrm{Tr} \mathcal A^m P_n }{l_1! \cdots l_j!}. 
\eeq
Finally, for any two positive numbers $\ell_1<\ell_2$, we let
\beq \no
P_{\ell_1,\ell_2}=P_{[\ell_2]+1}-P_{[\ell_1]-1},
\eeq
where $[x]$ is the integer value of $x$. We can now state the main technical result of this paper.

\begin{theorem} \label{th:comparison-general}
Fix $0<\alpha<1$ and let $\{ \mathcal A^{(n)} \}_{n=1}^\infty$ $\{ \mathcal B^{(n)} \}_{n=1}^\infty$ be two matrix sequences, each satisfying  Hypothesis \ref{Hypothesis1} with $\alpha$. 
Then, for any $\beta>\alpha$ and any $m \geq 2$, 
\beq \label{eq:almost-band}
\begin{split}
& \left|\calC_m^{(n)} \left(\mathcal B^{(n)} \right) - \calC_m^{(n)} \left( \mathcal A^{(n)} \right) \right| \\
&\quad \leq C(m,\beta) \left \| P_{\left \{n-2mn^\beta,n+2mn^\beta \right\}} \left( \calB^{(n)} -\calA^{(n)}\right) P_{\left \{n-2mn^\beta,n+2mn^\beta\right \}} \right \|_1+o(1)
\end{split}
\eeq
as $n \rightarrow \infty$, where $C(m,\beta)$ is a constant that depends on $m$ and $\beta$ but is independent of $n$.
\end{theorem}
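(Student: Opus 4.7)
The plan is to write the difference $\calC_m^{(n)}(\calB^{(n)}) - \calC_m^{(n)}(\calA^{(n)})$ as a finite sum of traces, each linear in $\mathcal{D} := \calB^{(n)} - \calA^{(n)}$, and then to show that only entries of $\mathcal{D}$ inside the strip $S = \{n-2mn^\beta,\, n+2mn^\beta\}$ contribute to leading order. The key analytic input is that the bound \eqref{eq:alpha-exp-decay} propagates to each power: by iterating it one obtains $|(\calA^{(n)})^l_{r,s}|,\, |(\calB^{(n)})^l_{r,s}| \leq C_l\, e^{-C_l'|r-s|/n^\alpha}$ for $l \leq m$, with constants uniform in $n$. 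I would then apply the telescoping identity $\calB^{l_k} - \calA^{l_k} = \sum_{p=0}^{l_k-1}\calB^{p}\,\mathcal{D}\,\calA^{l_k-1-p}$ in each slot of each trace appearing in \eqref{eq:newcumu}---together with the analogous expansion of $\Tr\calB^m P_n - \Tr\calA^m P_n$---to rewrite the cumulant difference as a finite sum of traces of the form $\Tr W \mathcal{D} W'$, with $W, W'$ products of $\calA^{(n)}, \calB^{(n)}, P_n$ that are uniformly bounded in operator norm by Hypothesis \ref{Hypothesis1}.

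Next I split $\mathcal{D} = P_S \mathcal{D} P_S + (\mathcal{D} - P_S \mathcal{D} P_S)$. For the in-strip piece, the estimate $|\Tr W P_S \mathcal{D} P_S W'| \leq \|W\|_\infty \|W'\|_\infty \|P_S \mathcal{D} P_S\|_1$ yields, after summing over the finitely many terms generated by the telescoping, exactly the leading term $C(m,\beta)\|P_S(\calB^{(n)}-\calA^{(n)})P_S\|_1$ of \eqref{eq:almost-band}. For the off-strip remainder I would pass to the matrix-element representation
\begin{equation*}
\Tr \calA^{l_1} P_n \cdots \calA^{l_j} P_n \;-\; \Tr \calA^m P_n \;=\; \sum_{\substack{i_1,\ldots,i_{j-1} \in \bbN,\, i_j \leq n \\ \exists\, k<j:\ i_k > n}} \calA^{l_1}_{i_j,i_1}\, \calA^{l_2}_{i_1,i_2} \cdots \calA^{l_j}_{i_{j-1},i_j},
\end{equation*}
which exposes the fact that the only surviving configurations in $\calC_m^{(n)}$ are those that cross the boundary at $n$. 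Combining this constraint with the exponential decay on scale $n^\alpha$ and $\beta > \alpha$, any configuration in which some index lies farther than $2mn^\beta$ from $n$ carries an extra suppression factor $e^{-c n^{\beta-\alpha}}$; evaluating the resulting geometric sums using the iterated bounds from the previous paragraph produces an $o(1)$ contribution from every telescoping slot.

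The main obstacle is ensuring that the algebraic cancellation in \eqref{eq:newcumu} really does remove the bulk contribution to each trace: each individual $\Tr \calA^{l_1}P_n \cdots \calA^{l_j}P_n$ grows linearly in $n$, and only the specific linear combination containing the $-\Tr \calA^m P_n$ subtraction and the alternating prefactors $\frac{(-1)^j}{j}$ collapses to a boundary-localized quantity to which Combes-Thomas decay can be applied. Verifying this collapse carefully---matching, term by term across the different values of $j$, the configurations where all intermediate $i_k$ stay on one side of $n$ so that they cancel exactly---and then tracking the resulting boundary-localization through the $\mathcal{D}$-telescoping so that each $W \mathcal{D} W'$ really is concentrated near row/column $n$, is the delicate combinatorial step. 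Once this localization is in place, the trace-norm estimates from the previous paragraph, combined with the uniform operator-norm bounds on $W, W'$ inherited from Hypothesis \ref{Hypothesis1}(1), are routine.
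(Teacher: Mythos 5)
Your plan shares the paper's backbone (telescope in $\mathcal{D}=\calB^{(n)}-\calA^{(n)}$, use the $-\Tr\calA^m P_n$ subtraction for boundary localization, finish with a trace-norm estimate), but two of your claims do not hold as stated. The first is the propagation of \eqref{eq:alpha-exp-decay} to powers with $n$-independent constants: convolving two kernels decaying on scale $n^\alpha$ picks up a prefactor of order $n^\alpha$ from the summation, so the correct shape is $|(\calA^{(n)})^l_{r,s}|\leq C_l\, n^{\alpha(l-1)}e^{-c_l|r-s|/n^\alpha}$. This is not fatal --- the off-strip factor $e^{-c\,n^{\beta-\alpha}}$ eventually beats any power of $n$ --- but it is a real error, and it is precisely what the paper's proof avoids by first truncating $\calA^{(n)},\calB^{(n)}$ to banded matrices of band $n^\beta$ (discarding an $o(1)$ error by Hypothesis~\ref{Hypothesis1}), so that powers remain \emph{exactly} banded with band $\leq mn^\beta$ and there is no decay-of-powers issue at all.

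The second and more serious gap is the logical order in your middle paragraph. After the $\mathcal{D}$-telescoping, a single term $\Tr W\mathcal{D}W'$ has no boundary localization of its own --- such a trace is $O(n)$ in general --- so its off-strip piece $\Tr W(\mathcal{D}-P_S\mathcal{D}P_S)W'$ is not automatically $o(1)$. The localization appears only when the telescoping of $\Tr\calB^{l_1}P_n\cdots\calB^{l_j}P_n - \Tr\calA^{l_1}P_n\cdots\calA^{l_j}P_n$ and that of $\Tr\calB^m P_n-\Tr\calA^m P_n$ are carried out \emph{in parallel and paired}, so that each $\mathcal{D}$-term comes with a $P_n$-free companion, and it is only the \emph{difference} of such a pair that reduces to a sum over configurations with some intermediate index $>n$. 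You split $\mathcal{D}$ before extracting this pairing, and then try to retrofit the boundary-crossing representation afterwards; the ``delicate combinatorial step'' your last paragraph defers is exactly the pairing that must go \emph{before} the split. Also note that the sign prefactors $(-1)^j/j$ play no role in this mechanism: the subtraction of $\Tr\calA^m P_n$ is built into each $(j,\vec l)$ term of \eqref{eq:newcumu} separately, and the localization is term-by-term, not a cancellation across values of $j$. Once the band-truncation is in place, the boundary-crossing constraint forces every index into $[n-mn^\beta,n+mn^\beta]$ by bandedness alone, and inserting $P_{\{n-2mn^\beta,n+2mn^\beta\}}$ around $\calB_1^{(n)}-\calA_1^{(n)}$ is an exact algebraic identity rather than an approximation --- that is what makes the paper's argument close cleanly.
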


\begin{proof}

We may assume that the constants in Hypothesis \ref{Hypothesis1} for $\calA^{(n)}$ and $\calB^{(n)}$ are the same.
 
First, write
\beq \no
\calA^{(n)}=\calA^{(n)}_1+\calA^{(n)}_2,
\eeq
\beq \no
\calB^{(n)}=\calB^{(n)}_1+\calB^{(n)}_2
\eeq
where
\beq \no
\left( \calA^{(n)}_1 \right)_{r,s}=\left \{ \begin{array}{cc}
\calA^{(n)}_{r,s} & |r-s| \leq n^\beta \\
0 & \textrm{otherwise}
\end{array} \right.
\eeq
and similarly for $\calB^{(n)}$, so that $\calA^{(n)}_1$ and $\calB^{(n)}_1$ are banded matrices of  band size $2n^\beta$.
By Hypothesis \ref{Hypothesis1} 
\beq \label{eq:exponent-boundA}
\left \| \left(\calA^{(n)}_2 \right)_{r,s}\right \|_\infty \leq \sum_{j=n^\beta}^\infty C e^{-\frac{C' j}{n^\alpha}}\leq \widetilde{C}n^\alpha e^{-C' n^{\beta-\alpha}}
\eeq 
and
\beq \label{eq:exponent-boundB}
\left \| \left(\calB^{(n)}_2 \right)_{r,s}\right \| _\infty\leq \sum_{j=n^\beta}^\infty C e^{-\frac{C' j}{n^\alpha}}\leq \widetilde{C}n^\alpha e^{-C' n^{\beta-\alpha}}
\eeq

Now write each term in the sum in \eqref{eq:newcumu} as
\beq \no
 \frac{\textrm{Tr}\left( \calA^{l_1}_1+\calA^{l_1}_2 \right)P_n\cdots \left( \calA^{l_j}_1 +\calA^{l_j}_2 \right) P_n-\textrm{Tr}\left( \calA^m_1+\calA^m_2 \right) P_n }{l_1! \cdots l_j!}
\eeq
After opening the paranthesis and collecting terms we get
\beq \no
\begin{split}
\calC_m^{(n)}(\mathcal A) &= \sum_{j=2}^m \frac{(-1)^j}{j} \sum_{\overset{l_1+ l_2+ \cdots l_j=m}{ l_i\geq 1}} \frac{\textrm{Tr} \left(\calA_1\right)^{l_1} P_n\cdots \left(\calA_1\right)^{l_j} P_n-\textrm{Tr} \left(\calA_1\right)^m P_n }{l_1! \cdots l_j!} \\
&\qquad+r_n\left(\calA\right). 
\end{split}
\eeq
were $r_n\left(\calA \right)$ is a sum of terms, each of which contains a factor of $\left( \calA_2 \right)^{l_k}$ for some $l_k \leq m$. 
The number of terms in the sum is a constant that depends solely on $m$, all matrices involved are bounded, and so, since the trace is over an $n$-dimensional subspace, we get from \eqref{eq:exponent-boundA}
\beq \no
\left \| r_n\left( \calA \right) \right \|_\infty\leq \widetilde{C}(m) n^{1+\alpha} e^{-C' n^{\beta-\alpha}} \rightarrow 0
\eeq
as $n \rightarrow \infty$. The same can be done for $\calC_m^{(n)}(\calB)$. 

Thus we see that up to an $o(1)$ term, $\calC_m^{(n)}(\calB)-\calC_m^{(n)}(\calA)$ is a sum of terms of the form
\beq \label{eq:summand-comparison}
\begin{split}
& \left( \Tr \left(\calB_1^{(n)} \right)^{l_1}P_n\cdots P_n\left( \calB_1^{(n)}\right)^{l_j}P_n-\Tr \left(\calA_1^{(n)} \right)^{l_1}P_n\cdots P_n \left( \calA_1^{(n)}\right)^{l_j}P_n \right)\\
&\quad- \left( \Tr \left( \calB_1^{(n)} \right)^mP_n-\Tr \left( \calA_1^{(n)} \right)^mP_n \right)
\end{split}
\eeq
where $2 \leq j \leq m$, and $\{ l_1, l_2,\ldots, l_j \}$ are positive integers satisfying $l_1+l_2+\ldots+l_j=m$.

Writing the first difference as a telescoping sum
\beq \label{eq:telescope1}
\begin{split}
&  \left(\calB_1^{(n)} \right)^{l_1}P_n\cdots \left( \calB_1^{(n)}\right)^{l_j}P_n-\left(\calA_1^{(n)} \right)^{l_1}P_n\cdots \left( \calA_1^{(n)}\right)^{l_j}P_n \\
&= \left(\calB_1^{(n)} \right)^{l_1}P_n\cdots \left( \calB_1^{(n)}\right)^{l_j}P_n\\
&\quad-\left (\calA_1^{(n)} \right)\left(\calB_1^{(n)} \right)^{l_1-1}P_n\cdots \left( \calB_1^{(n)}\right)^{l_j}P_n \\
&\quad+ \left (\calA_1^{(n)} \right)\left(\calB_1^{(n)} \right)^{l_1-1}P_n\cdots \left( \calB_1^{(n)}\right)^{l_j}P_n \\
&\quad -\left (\calA_1^{(n)} \right)^2\left(\calB_1^{(n)} \right)^{l_1-2}P_n\cdots \left( \calB_1^{(n)}\right)^{l_j}P_n \\
&\quad+ \ldots \\
&\quad+\left(\calA_1^{(n)} \right)^{l_1}P_n\cdots \left( \calA_1^{(n)}\right)^{l_j-1} \left( \calB_1^{(n)}\right)P_n \\
&\quad-\left(\calA_1^{(n)} \right)^{l_1}P_n\cdots \left( \calA_1^{(n)}\right)^{l_j}P_n
\end{split}
\eeq
and similarly for the second term
\beq \label{eq:telescope2}
\begin{split}
 \left( \calB_1^{(n)} \right)^mP_n- \left( \calA_1^{(n)} \right)^mP_n &=  \left( \calB_1^{(n)} \right)^mP_n -\left( \calA_1^{(n)} \right)\left( \calB_1^{(n)} \right)^{m-1}P_n \\
&\quad+ \ldots \left( \calA_1^{(n)} \right)^{m-1} \calB_1^{(n)} P_n-\left( \calA_1^{(n)} \right)^mP_n,
\end{split}
\eeq
we see that \eqref{eq:summand-comparison} is a sum of $m$ terms of the form
\beq \label{eq:terms1}
\begin{split}
& \Tr \left( \calA_1^{(n)} \right)^{l_1}P_n \cdots P_n \left( \calA_1^{(n)} \right)^{l_k-i}\left( \calB_1^{(n)}\right)^i P_n \cdots \left( \calB_1^{(n)} \right)^{l_j}P_n \\
&\quad -\Tr \left( \calA_1^{(n)} \right)^{l_1}P_n \cdots P_n \left( \calA_1^{(n)} \right)^{l_k+1-i}\left( \calB_1^{(n)}\right)^{i-1} P_n \cdots \left( \calB_1^{(n)} \right)^{l_j}P_n \\
&\quad - \Tr \left( \calA_1^{(n)} \right)^{l_1+\ldots+ l_k-i} \left( \calB_1^{(n)} \right)^{m-l_1\ldots-l_k+i}P_n \\
&\quad +\Tr  \left( \calA_1^{(n)} \right)^{l_1+\ldots+ l_k+1-i} \left( \calB_1^{(n)} \right)^{m-l_1\ldots-l_k+i-1}P_n \\
&= \Tr \left( \left( \calA_1^{(n)} \right)^{l_1}P_n \cdots \left( \calB_1^{(n)} -\calA_1^{(n)}\right) \quad \left( \calB_1^{(n)} \right)^{i-1} P_n \cdots \left( \calB_1^{(n)} \right)^{l_j}P_n \right)\\
&\quad -\Tr \left(  \left( \calA_1^{(n)} \right)^{l_1+\ldots+ l_k-i} \left( \calB_1^{(n)}-\calA_1^{(n)} \right) \left(\calB_1^{(n)}\right)^{m-l_1\ldots-l_k+i-1}P_n \right).
\end{split}
\eeq

Now, for any two matrices, $A$ and $B$, 
\beq \no
\Tr \left( AP_nBP_n\right)-\Tr\left( ABP_n\right)=\Tr \left( A(I-P_n)BP_n\right)=\sum_{s=1}^n \sum_{r>n}A_{s,r}B_{r,s}.
\eeq
Extending this fact to a product of $j$ matrices we get that
\beq \label{eq:help1}
\begin{split}
&\Tr \left( \left( \calA_1^{(n)} \right)^{l_1}P_n \cdots \left( \calB_1^{(n)} -\calA_1^{(n)}\right) \quad \left( \calB_1^{(n)} \right)^{i-1} P_n \cdots \left( \calB_1^{(n)} \right)^{l_j}P_n \right)\\
&\quad -\Tr \left(  \left( \calA_1^{(n)} \right)^{l_1+\ldots+ l_k-i} \left( \calB_1^{(n)}-\calA_1^{(n)} \right) \left(\calB_1^{(n)}\right)^{m-l_1\ldots-l_k+i-1}P_n \right)\\
&=\sum_{s=1}^n \sum_{\left(r_1,r_2,\ldots, r_{j-1}\right) \in I_{j,n}}  \left(\left( \calA_1^{(n)} \right)^{l_1}\right)_{s,r_1} \cdots \\
&\quad \times \left( \left( \calA_1^{(n)} \right)^{l_k-i}\left( \calB_1^{(n)} -\calA_1^{(n)}\right) \left( \calB_1^{(n)} \right)^{i-1}\right)_{r_{k-1},r_k} \cdots \left( \left( \calB_1^{(n)} \right)^{l_j}\right)_{r_{j-1},s}
\end{split}
\eeq
where 
\beq \no
I_{j,n}=\left \{ \left(r_1,r_2,\ldots, r_{j-1} \right) \in \bbN^{j-1} \mid \exists q : r_q>n \right \}.
\eeq

However, recall that $\calA_1$ and $\calB_1$ are banded with band size $n^\beta$. Thus, by the definition of $I_{j,n}$ and since there are $m$ matrices in the product, we see that
\beq \no
\begin{split}
&\sum_{s=1}^{[n-mn^\beta]} \sum_{\left(r_1,r_2,\ldots, r_{j-1}\right) \in I_{j,n}}  \left(\left( \calA_1^{(n)} \right)^{l_1}\right)_{s,r_1} \cdots \\
&\quad \times \left( \left( \calA_1^{(n)} \right)^{l_k-i}\left( \calB_1^{(n)} -\calA_1^{(n)}\right) \left( \calB_1^{(n)} \right)^{i-1}\right)_{r_{k-1},r_k} \cdots \left( \left( \calB_1^{(n)} \right)^{l_j}\right)_{r_{j-1},s}=0
\end{split}
\eeq
so that
\beq \label{eq:help2}
\begin{split}
&\Tr \left( \left( \calA_1^{(n)} \right)^{l_1}P_n \cdots \left( \calB_1^{(n)} -\calA_1^{(n)}\right) \quad \left( \calB_1^{(n)} \right)^{i-1} P_n \cdots \left( \calB_1^{(n)} \right)^{l_j}P_n \right)\\
&\quad -\Tr \left(  \left( \calA_1^{(n)} \right)^{l_1+\ldots+ l_k-i} \left( \calB_1^{(n)}-\calA_1^{(n)} \right) \left(\calB_1^{(n)}\right)^{m-l_1\ldots-l_k+i-1}P_n \right)\\
&=\sum_{s=[n-mn^\beta]}^n \sum_{\left(r_1,r_2,\ldots, r_{j-1}\right) \in I_{j,n}}  \left(\left( \calA_1^{(n)} \right)^{l_1}\right)_{s,r_1} \cdots \\
&\quad \times \left( \left( \calA_1^{(n)} \right)^{l_k-i}\left( \calB_1^{(n)} -\calA_1^{(n)}\right) \left( \calB_1^{(n)} \right)^{i-1}\right)_{r_{k-1},r_k} \cdots \left( \left( \calB_1^{(n)} \right)^{l_j}\right)_{r_{j-1},s}.
\end{split}
\eeq

Applying \eqref{eq:help1} again we get
\beq \no
\begin{split}
&\Tr \left( \left( \calA_1^{(n)} \right)^{l_1}P_n \cdots \left( \calB_1^{(n)} -\calA_1^{(n)}\right) \quad \left( \calB_1^{(n)} \right)^{i-1} P_n \cdots \left( \calB_1^{(n)} \right)^{l_j}P_n \right)\\
&\quad -\Tr \left(  \left( \calA_1^{(n)} \right)^{l_1+\ldots+ l_k-i} \left( \calB_1^{(n)}-\calA_1^{(n)} \right) \left(\calB_1^{(n)}\right)^{m-l_1\ldots-l_k+i-1}P_n \right)\\
&=\Tr P_{\left \{n-mn^\beta,n \right \}} \Big( \left( \calA_1^{(n)} \right)^{l_1}P_n \cdots \left( \calB_1^{(n)} -\calA_1^{(n)}\right) \left( \calB_1^{(n)} \right)^{i-1} P_n \cdots \left( \calB_1^{(n)} \right)^{l_j} \\
&\quad-  \left( \calA_1^{(n)} \right)^{l_1+\ldots+ l_k-i} \left( \calB_1^{(n)}-\calA_1^{(n)} \right) \left(\calB_1^{(n)}\right)^{m-l_1\ldots-l_k+i-1} \Big),
\end{split}
\eeq
where recall that $P_{\ell_1,\ell_2}=P_{[\ell_2]+1}-P_{[\ell_1]-1}$.

To summarize, we have shown  $\calC_m^{(n)}\left(\calB^{(n)}\right)-\calC_m^{(n)}\left(\calA^{(n)} \right)$ is a sum of terms of the form
\beq \no
\Tr P_{\left \{n-mn^\beta,n \right \}} \Big( \widetilde{A} \left( \calB_1^{(n)} -\calA_1^{(n)}\right) \widetilde{B}
- \widetilde{C} \left( \calB_1^{(n)}-\calA_1^{(n)} \right) \widetilde{D} \Big)
\eeq
where $\widetilde{A},\widetilde{B},\widetilde{C},\widetilde{D}$ are all banded matrices of band size at most $mn^\beta$ and the number of terms in the sum depends only on $m$.

However, note that if $\widetilde{A}$ is a banded matrix with band size $mn^\beta$, then 
\beq \no
P_{\left\{b-mn^\beta,n \right\}}\widetilde{A} \left(I-P_{\left\{ n-2mn^\beta,n+2mn^\beta\right \}} \right)=0.
\eeq
This immediately implies that
\beq \no
\begin{split}
& \Tr P_{\left \{n-mn^\beta,n \right \}} \Big( \widetilde{A} \left( \calB_1^{(n)} -\calA_1^{(n)}\right) \widetilde{B}
- \widetilde{C} \left( \calB_1^{(n)}-\calA_1^{(n)} \right) \widetilde{D} \Big)\\
&=  \Tr P_{\left \{n-mn^\beta,n \right \}}  \widetilde{A}P_{\left\{ n-2mn^\beta,n+2mn^\beta\right \}} \left( \calB_1^{(n)} -\calA_1^{(n)}\right)P_{\left\{ n-2mn^\beta,n+2mn^\beta\right \}} \widetilde{B}\\
&- \Tr P_{\left \{n-mn^\beta,n \right \}}\widetilde{C} P_{\left\{ n-2mn^\beta,n+2mn^\beta\right \}} \left( \calB_1^{(n)}-\calA_1^{(n)} \right)P_{\left\{ n-2mn^\beta,n+2mn^\beta\right \}} \widetilde{D}. 
\end{split}
\eeq

Putting it all together and recalling that $\calA^{(n)}$ and $\calB^{(n)}$ are uniformly bounded, we see that 
\beq \no
\begin{split}
& \left|\calC_m^{(n)} \left(\calB^{(n)} \right) - \calC_m^{(n)} \left( \calA^{(n)} \right) \right| \\
&\quad \leq C(m) \left \| P_{\left \{ n-2mn^\beta,n+2mn^\beta \right \}} \left( \calB^{(n)} -\calA^{(n)}\right) P_{\left \{ n-2mn^\beta,n+2mn^\beta \right \}} \right \|_1+o(1).
\end{split}
\eeq
As this is \eqref{eq:almost-band}, we are done.

\end{proof}

The `almost-banded' structure of matrices satisfying Hypothesis \ref{Hypothesis1}  implies also

\begin{lemma} \label{lem:C-T-norm-estimate}
Suppose that $\{\calB^{(n)}\}_{n=1}^\infty$ satisfies Hypothesis \ref{Hypothesis1}. Then, for any $\beta>\alpha$ and any $m=1,2,3,\ldots$ there exist constants $D$ and $D'$ such that
\beq \label{eq:C-T-norm-estimate}
\left \|\left(I-P_{\left\{n-4mn^\beta,n+4mn^\beta \right\}} \right)\calB^{(n)} P_{\left\{ n-2mn^\beta,n+2mn^\beta \right\}} \right \|_1 \leq D n^{\beta+\alpha} e^{-D' n^{\beta-\alpha}},
\eeq 
\beq \label{eq:C-T-norm-estimate2}
\left \| P_{\left\{ n-2mn^\beta,n+2mn^\beta \right\}} \calB^{(n)}\left(I-P_{\left\{n-4mn^\beta,n+4mn^\beta \right\}} \right)\right \|_1 \leq D n^{\beta+\alpha} e^{-D'n^{\beta-\alpha}},
\eeq
\beq \label{eq:C-T-norm-estimate-HS}
\left \|\left(I-P_{\left\{n-4mn^\beta,n+4mn^\beta \right\}} \right)\calB^{(n)} P_{\left\{ n-2mn^\beta,n+2mn^\beta \right\}} \right \|_2 \leq D n^{\beta+\alpha} e^{-D' n^{\beta-\alpha}},
\eeq
and 
\beq \label{eq:C-T-norm-estimate2-HS}
\left \| P_{\left\{ n-2mn^\beta,n+2mn^\beta \right\}} \calB^{(n)}\left(I-P_{\left\{n-4mn^\beta,n+4mn^\beta \right\}} \right)\right \|_2 \leq D n^{\beta+\alpha} e^{-D'n^{\beta-\alpha}}
\eeq
\end{lemma}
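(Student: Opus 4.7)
\medskip

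The plan is to reduce all four estimates to a single elementary entrywise calculation using the Combes--Thomas type decay in Hypothesis \ref{Hypothesis1} together with the crude bound $\|A\|_1 \leq \sum_{i,j} |A_{i,j}|$ recalled in Section \ref{sec:operatorStuff}. I would first observe that it suffices to prove \eqref{eq:C-T-norm-estimate}; indeed \eqref{eq:C-T-norm-estimate2} follows by taking adjoints (since $P_k$ is self-adjoint and the hypothesis is symmetric in $r,s$), and the Hilbert--Schmidt bounds \eqref{eq:C-T-norm-estimate-HS}, \eqref{eq:C-T-norm-estimate2-HS} follow because $\|A\|_2 \leq \|A\|_1$.

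To estimate \eqref{eq:C-T-norm-estimate}, set $M_n = \left(I-P_{\{n-4mn^\beta,n+4mn^\beta\}}\right)\calB^{(n)} P_{\{n-2mn^\beta,n+2mn^\beta\}}$. The entry $(M_n)_{r,s}$ vanishes unless $s \in [n-2mn^\beta, n+2mn^\beta]$ (an interval of length $\leq 4mn^\beta+1$) and $r \notin [n-4mn^\beta, n+4mn^\beta]$, which forces $|r-s| \geq 2mn^\beta$. Hypothesis \ref{Hypothesis1} then gives $|(M_n)_{r,s}| \leq C e^{-C'|r-s|/n^\alpha}$ for such pairs, so
\begin{equation*}
\|M_n\|_1 \;\leq\; \sum_{r,s} |(M_n)_{r,s}| \;\leq\; (4mn^\beta+1)\cdot 2 \sum_{k \geq 2mn^\beta} C\, e^{-C' k / n^\alpha}.
\end{equation*}
The inner geometric sum equals $C\, e^{-2mC' n^{\beta-\alpha}}/(1-e^{-C'/n^\alpha})$, and since $1-e^{-C'/n^\alpha} \geq C'/(2n^\alpha)$ for all sufficiently large $n$, this is bounded by a constant times $n^\alpha e^{-2mC' n^{\beta-\alpha}}$. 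Combining the two factors yields a bound of the form $D n^{\beta+\alpha} e^{-D' n^{\beta-\alpha}}$, which is exactly \eqref{eq:C-T-norm-estimate}.

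The other three estimates follow by the observations above: for \eqref{eq:C-T-norm-estimate2} take adjoints (using that $\|A^*\|_1 = \|A\|_1$ and that the exponential decay estimate is symmetric in its two indices), and for \eqref{eq:C-T-norm-estimate-HS}, \eqref{eq:C-T-norm-estimate2-HS} use $\|A\|_2 \leq \|A\|_1$. There is no real obstacle here—the only quantitative point to be careful about is that $\beta > \alpha$ is what forces $n^{\beta-\alpha}\to\infty$, so the exponential wins against the polynomial prefactor $n^{\beta+\alpha}$; without this assumption the bound would be useless, which is precisely the reason $\beta>\alpha$ appears throughout Section \ref{sec:comparison}.
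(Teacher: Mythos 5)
Your proof is correct and takes essentially the same route as the paper: bound $\|M_n\|_1$ by $\sum_{r,s}|(M_n)_{r,s}|$, observe that the two projections force $|r-s|\gtrsim 2mn^\beta$, and then sum the exponential tail. The one small point where you streamline the argument is the Hilbert--Schmidt bounds: the paper reruns the same computation with $|\calB^{(n)}_{i,j}|$ replaced by $|\calB^{(n)}_{i,j}|^2$, whereas you simply invoke $\|A\|_2\le\|A\|_1$ (valid here since the right-hand projection makes the operator finite rank); this is a legitimate shortcut. Your appeal to adjoints for \eqref{eq:C-T-norm-estimate2} also works, but note that taking adjoints produces $P\,(\calB^{(n)})^*\,(I-P')$ rather than $P\,\calB^{(n)}\,(I-P')$; the argument goes through because, as you remark, the entrywise bound in Hypothesis \ref{Hypothesis1} is symmetric in $(r,s)$, so $(\calB^{(n)})^*$ satisfies the same hypothesis and one can apply \eqref{eq:C-T-norm-estimate} to it before taking adjoints. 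The paper simply says the proof is the same by the analogous direct computation; either phrasing is fine.
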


\begin{proof}
We compute
\beq \no
\begin{split}
&\left \|\left(I-P_{\left\{n-4mn^\beta,n+4mn^\beta \right\}} \right)\calB^{(n)} P_{\left\{ n-2mn^\beta,n+2mn^\beta \right\}} \right \|_1  \\ 
& \quad \leq \sum_{n-2mn^\beta \leq i \leq n+2mn^\beta  } \sum_{1 \leq j \leq n-4mn^\beta, n+4mn^\beta \leq j <\infty}\left | \calB^{(n)} _{i,j} \right| \\
& \quad \leq 4mn^\beta \left( \sup_{n-2mn^\beta \leq i \leq n+2mn^\beta} \sum_{1 \leq j \leq n-4mn^\beta, n+4mn^\beta \leq j <\infty}\left | \calB^{(n)} _{i,j} \right| \right) \\
&\quad \leq 4mn^\beta \left(2\sum_{j=2m[n^\beta]}^\infty C e^{-\frac{-C'j}{n^\alpha}}   \right) \\
&\quad \leq 8Cmn^\beta e^{-2mn^{\beta-\alpha}}\frac{1}{1-e^{-\frac{C'}{n^\alpha}}} \\
&\quad \leq 16 \frac{C}{C'}mn^{\beta+\alpha} e^{-2mn^{\beta-\alpha}}
\end{split}
\eeq
for sufficiently large $n$ (depending on $\alpha$ and $C'$). This proves \eqref{eq:C-T-norm-estimate}. The proof of \eqref{eq:C-T-norm-estimate2} is the same. The same computation, with $ \sum \left | \calB^{(n)} _{i,j} \right| $ replaced by $\sum \left | \calB^{(n)} _{i,j} \right|^2 $ proves \eqref{eq:C-T-norm-estimate-HS} and \eqref{eq:C-T-norm-estimate2-HS}. 
\end{proof}


\subsection{Proof of Theorem \ref{thm:main-result} in case $f(x)=\sum_{j=1}^n c_j \Im \frac{1}{\eta_j-x}$}

We first formulate conditions on a perturbation of a Jacobi matrix under which cumulant asymptotics are preserved.

\begin{proposition} \label{Jacobi-perturbation}
Let $J_0$ and $J$ be two bounded Jacobi matrices and fix $\eta$ with $\rm{Im}\eta>0$ and $x_0 \in \bbR$. For $1>\alpha>0$ let 
\beq \no
\lambda_n=x_0+\frac{\eta}{n^\alpha}
\eeq 
and let $G_0(\lambda_n)=\left(J_0-\lambda_n \right)^{-1}$ and $G(\lambda_n)=\left(J-\lambda_n \right)^{-1}$.
Fix $m$ and assume that there exists $1>\beta>\alpha$ such that 
\beq \label{eq:measure-condition}
\begin{split}
\left(\sum_{j=[n-4mn^\beta]}^{[n+4mn^\beta]}\Im \left( G_0(\lambda_n) \right)_{j,j} \right)=O(n^\beta),
\end{split}
\eeq
and 
\beq \label{eq:jacobi-perturb-cond2}
\left \| \left(J-J_0 \right) P_{\left \{ n-6mn^\beta,n+6mn^\beta \right \} } \right \|_\infty = o \left (n^{-\beta} \right).
\eeq
Then
\beq \label{eq:preconclusion1}
\frac{1}{n^\alpha} \left \|  P_{\left \{ n-2mn^\beta,n+2mn^\beta\right\}} \left( G(\lambda_n)-G_0(\lambda_n)\right) P_{\left \{n-2mn^\beta,n+2mn^\beta\right \}} \right \|_1 \rightarrow 0
\eeq
as $n \rightarrow \infty$.

Alternatively, \eqref{eq:preconclusion1} holds if we replace both \eqref{eq:measure-condition} and \eqref{eq:jacobi-perturb-cond2} with the single condition
\beq \label{eq:jacobi-perturb-cond}
\left \|  \left(J-J_0 \right) P_{\left \{ n-4mn^\beta,n+4mn^\beta \right \} } \right \|_\infty= o \left (n^{-(\beta+\alpha)} \right).
\eeq

\end{proposition}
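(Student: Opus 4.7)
The plan is to apply the second resolvent identity
\[
P\bigl(G(\lambda_n) - G_0(\lambda_n)\bigr)P \;=\; -\,PG_0\,V\,GP, \qquad V := J - J_0,\ \ P := P_{\{n-2mn^\beta,\,n+2mn^\beta\}},
\]
and exploit the Combes--Thomas decay of both $G$ and $G_0$ at scale $n^\alpha$ (Proposition \ref{prop:CT}). Since $V$ is tridiagonal, sandwiching its position with a slightly larger window $Q$ of size $O(n^\beta) \gg n^\alpha$ (the window where the assumption on $V$ is imposed) introduces only error terms of the form $PG_0(I-Q)\cdot\ldots$ or $\ldots\cdot(I-Q)GP$, whose trace norms are exponentially small in $n^{\beta-\alpha}$ by Lemma \ref{lem:C-T-norm-estimate}. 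The task reduces to showing $\|PG_0\cdot QVQ\cdot GP\|_1 = o(n^\alpha)$.

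Under the alternative condition \eqref{eq:jacobi-perturb-cond}, one has $\|QVQ\|_\infty \le \|VQ\|_\infty = o(n^{-(\alpha+\beta)})$, which is strong enough that a direct H\"older bound using only the crude estimates $\|PG_0\|_2,\ \|GP\|_2 \le (\mathrm{rank}\,P)^{1/2}\|G\|_\infty = O(n^{\alpha+\beta/2})$ yields
\[
\|PG_0\cdot QVQ\cdot GP\|_1 \;\le\; O(n^{\alpha+\beta/2})\cdot o(n^{-(\alpha+\beta)})\cdot O(n^{\alpha+\beta/2}) \;=\; o(n^\alpha),
\]
finishing that case.

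Under the main conditions, the weaker smallness $\|QVQ\|_\infty = o(n^{-\beta})$ costs a factor $n^\alpha$ and the crude bound gives only $o(n^{3\alpha/2})$. I would close the gap by using \eqref{eq:measure-condition} together with $G_0^*G_0 = \Im G_0/\Im\lambda_n$:
\[
\|PG_0\|_2^2 \;=\; \mathrm{Tr}(P\,\Im G_0\,P)/\Im\lambda_n \;=\; O(n^\beta)\cdot O(n^\alpha) \;=\; O(n^{\alpha+\beta}),
\]
improving each $G_0$-factor by $n^{\alpha/2}$. No analogous bound on $\|GP\|_2$ is directly available, so I would iterate the resolvent identity,
\[
G \;=\; \sum_{k=0}^{N}(-G_0V)^k\,G_0 + (-G_0V)^{N+1}G,
\]
and bound, after localizing each $V$ by $QVQ$, each summand by
\[
\|P(G_0V)^k G_0 P\|_1 \;\le\; \|PG_0\|_2\cdot \|QVQ\|_\infty^k\,\|G_0\|_\infty^{k-1}\cdot \|G_0P\|_2 \;=\; o\bigl(n^{\alpha + (k-1)(\alpha-\beta)}\bigr).
\]
Since $\alpha<\beta$, this is a geometric series with ratio $o(n^{\alpha-\beta})\to 0$, summing to $o(n^\alpha)$. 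The remainder, with the crude $\|GP\|_2 = O(n^{\alpha+\beta/2})$, comes out to $o(n^{3\alpha/2 + N(\alpha-\beta)})$, which is $o(n^\alpha)$ as soon as $N > \alpha/(2(\beta-\alpha))$; the finitely many localization errors from each $V$ remain exponentially small.

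The main obstacle is precisely this asymmetry: the measure condition delivers a sharp Hilbert--Schmidt estimate on $G_0$ but we have no spectral information on $J$ itself. The Neumann-type iteration above effectively transfers the $G_0$-control to $G$ across the small perturbation, with the hypothesis $\alpha<\beta$ ensuring geometric convergence and with truncation at a $\beta$-dependent depth so that the crude remainder vanishes. The alternative condition \eqref{eq:jacobi-perturb-cond} is calibrated so that its extra $n^{-\alpha}$ directly absorbs the $n^{\alpha/2}$-per-side loss, allowing this delicate step to be bypassed.
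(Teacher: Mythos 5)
Your argument is correct in both its pieces, and for the alternative condition it runs essentially parallel to the paper's; but for the main conditions the route is genuinely different and worth comparing.

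The paper also starts from the resolvent identity and localizes the perturbation, and also uses $G_0 G_0^* = \Im G_0/\Im\lambda_n$ together with \eqref{eq:measure-condition} to get the sharp Hilbert--Schmidt bound $\|P G_0\|_2 = O(n^{(\alpha+\beta)/2})$. Where the approaches diverge is in handling the remaining unknown $G$-factor. The paper applies the resolvent identity \emph{once more}, in the compact form
$G = G_0\bigl(I + (J_0-J)G\bigr)$,
which trades the bad factor $G$ for a good $G_0$ at the price of a new factor $(I+(J_0-J)G)$. Using \eqref{eq:jacobi-perturb-cond2}, the CT decay, and $\alpha<\beta$, this extra factor is shown to have \emph{operator norm} $O(1)$ on the relevant window, so the final trace-norm estimate has both ends hit by $G_0$ in Hilbert--Schmidt norm, with the $o(n^{-\beta})$ from $V$ in the middle, giving $o(n^\alpha)$ in one line. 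You instead expand $G$ in a truncated Neumann series up to a depth $N(\alpha,\beta)$ so that the crude remainder falls below $o(n^\alpha)$. Your exponent bookkeeping checks out (modulo a harmless off-by-one in the summation index since the $k=0$ term $G_0$ cancels from $G-G_0$), and the geometric gain $o(n^{\alpha-\beta})$ per step is exactly the $\alpha<\beta$ assumption doing its job. The paper's device is essentially your iteration summed to infinity and then bounded all at once; it buys a shorter argument and avoids a delicate point you pass over quickly: every copy of $V$ in the iteration must be localized, and after $N$ applications of $G_0$ the effective window grows. If the per-step increment is taken to be $2mn^\beta$ (as in Lemma \ref{lem:C-T-norm-estimate}) the windows exceed $6mn^\beta$ already for $N\geq 2$, which is outside where \eqref{eq:jacobi-perturb-cond2} gives control. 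This is fixable---the CT decay is at scale $n^\alpha$, so increments of size, say, $n^{(\alpha+\beta)/2}$ still give exponentially small leakage while keeping the total window inside $6mn^\beta$ for all fixed $N$---but it needs to be said. In short: your proof works, is a bit more laborious, and requires slightly more careful window management than the sketch provides; the paper's second-resolvent-identity trick eliminates the iteration and the window growth simultaneously.
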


\begin{proof}
First recall that for any self-adjoint Jacobi matrix, $J$, and $\lambda \in \bbC$ with $\Im\lambda>0$,
\beq \label{eq:resolvent-norm-bound}
\left \|G(J;\lambda) \right \|_\infty \leq \frac{1}{\Im\lambda}.
\eeq
Combining this with Proposition \ref{prop:CT}, we see that both $\calA^{(n)}=\frac{G_0(\lambda_n)}{n^\alpha}$ and $\calB^{(n)}=\frac{G(\lambda_n)}{n^\alpha}$ satisfy Hypothesis \ref{Hypothesis1}.  

For notational simplicity we write 
\begin{align*}
\widetilde{P}_n^1&=P_{\left\{n-2mn^\beta,n+{2mn}^\beta\right\}}, \\
\widetilde{P}_n^2&=P_{\left\{n-4mn^\beta,n+{4mn}^\beta\right\}} \\
\widetilde{P}_n^3&=P_{\left\{n-6mn^\beta,n+{6mn}^\beta\right\}}.
\end{align*}

Now use the resolvent formula 
\begin{equation} \label{eq:resolvent-formula}
G(\lambda_n) -G_0(\lambda_n)=G(\lambda_n)\left(J_0-J \right)G_0(\lambda_n)
\end{equation}
to  write 
\begin{multline} \label{eq:resolvent-consequence}
\widetilde{P}_n^1  \left(G(\lambda_n) -G_0(\lambda_n)\right)\widetilde{P}_n^1  =\widetilde{P}_n^1   G(\lambda_n) \left(J_0-J \right)G_0(\lambda_n) \widetilde{P}_n^1 \\
=\widetilde{P}_n^1G(\lambda_n) \widetilde{P}_n^2 \left(J_0-J \right) \widetilde{P}_n^2 G_0(\lambda_n) \widetilde{P}_n^1 +\mathcal{R}_n.
\end{multline}
By Lemma \ref{lem:C-T-norm-estimate}, the fact that $\frac{G_0(\lambda_n)}{n^\alpha}$ and $\frac{G(\lambda_n)}{n^\alpha}$ satisfy Hypothesis \ref{Hypothesis1}, and by the boundedness of $J_0$ and $J$, we see that for some $D_1,D_2, D_3$ 
\beq \label{eq:remainder-estimate}
\left \| \mathcal{R}_n \right \|_1 \leq D_1 n^{D_2} e^{-D_3 n^{\beta-\alpha}}.
\eeq
Thus we are left with showing
\beq \label{eq:onenormresolvent}
\begin{split}
& \frac{1}{n^\alpha}\left \| \widetilde{P}_n^1G(\lambda_n) \widetilde{P}_n^2 \left(J_0-J \right) \widetilde{P}_n^2 G_0(\lambda_n)\widetilde{P}_n^1 \right \|_1 \rightarrow 0 
\end{split}
\eeq
as $n \rightarrow \infty$.

Let us first assume that we have \eqref{eq:jacobi-perturb-cond}. 
Since $\left \|    G_0(\lambda_n) \right \|_\infty,\left \|    G(\lambda_n) \right \|_\infty \leq Cn^\alpha$ and $\widetilde{P}_n^2$ has rank $4 m n^\beta$, we then get
\begin{multline}
 \left \| \widetilde{P}_n^1 G(\lambda_n)  \widetilde{P}_n^2 \left(J_0-J \right) \widetilde{P}_n^2 G_0(\lambda_n)\widetilde{P}_n^1 \right \|_1 \leq 
C^2 n^{2\alpha}\left  \|  \left(J_0-J \right) \widetilde{P}_n^2\right \|_1\\
 \leq 4C^2m n^{\beta+2\alpha}\left  \| \left(J_0-J \right) \widetilde{P}_n^2\right \|_\infty,
\end{multline}
so that indeed \eqref{eq:onenormresolvent} follows. 

Now let us show that \eqref{eq:onenormresolvent} follows from \eqref{eq:measure-condition} and \eqref{eq:jacobi-perturb-cond2}. We first use the resolvent identity a second time to write
\begin{multline}\label{eq:2ndresolventAA} 
 \widetilde{P}_n^1 G(\lambda_n) \widetilde{P}_n^2 \left(J_0-J \right) \widetilde{P}_n^2\ G_0(\lambda_n)\widetilde{P}_n^1\\=
  \widetilde{P}_n^1 G_0(\lambda_n)(I+\left(J_0-J \right)G(\lambda_n)) \widetilde{P}_n^2 \left(J_0-J \right) \widetilde{P}_n^2 G_0(\lambda_n)\widetilde{P}_n^1
\end{multline}
and so
\begin{multline}\label{eq:2ndresolventA} 
 \left\|\widetilde{P}_n^1 G(\lambda_n) \widetilde{P}_n^2 \left(J_0-J \right) \widetilde{P}_n^2\ G_0(\lambda_n)\widetilde{P}_n^1\right\|_1\\\leq 
 \left\| \widetilde{P}_n^1 G_0(\lambda_n)\right \|_2 \left\|(I+\left(J_0-J \right)G(\lambda_n)) \widetilde{P}_n^2\right\|_\infty \left\| \left(J_0-J \right) \widetilde{P}_n^2\right\|_\infty\left\|  G_0(\lambda_n) \widetilde{P}_n^1\right\|_2.
\end{multline}
As before, we may write
\beq \no \left(J_0-J \right)G(\lambda_n) \widetilde{P}_n^2=\left(J_0-J \right)\widetilde{P}_n^3G(\lambda_n) \widetilde{P}_n^2+ \widetilde{ \mathcal R_n},
\eeq
 where $\widetilde{\mathcal R}_n$  also satisfies \eqref{eq:remainder-estimate} with the $\|\cdot\|_\infty$ norm.  By combining this with the condition \eqref{eq:jacobi-perturb-cond2} and the fact that $\|G(\lambda_n)\|_\infty = \mathcal O(n^\alpha)$, we find 
\begin{equation}  \no \left\|\left(J_0-J \right)G(\lambda_n) \widetilde{P}_n^2\right\|_\infty \leq \left\|\left(J_0-J \right)\widetilde{P}_n^3\right\|_\infty \left\|G(\lambda_n) \widetilde{P}_n^2\right\|_\infty+o(1)=o(1),
\end{equation}
as $n \to \infty$. This implies that
\beq \no
\left\|(I+\left(J_0-J \right)G(\lambda_n)) \widetilde{P}_n^2\right\|_\infty \leq D
\eeq
for some constant $D>0$ which, combined with \eqref{eq:2ndresolventA}, implies that 
\begin{multline}\label{eq:2ndresolventAB} 
 \left\|\widetilde{P}_n^1 G(\lambda_n) \widetilde{P}_n^2 \left(J_0-J \right) \widetilde{P}_n^2\ G_0(\lambda_n)\widetilde{P}_n^1\right\|_1\\=
D \left\| \widetilde{P}_n^1 G_0(\lambda_n)\right\|_2 \left\| \left(J_0-J \right) \widetilde{P}_n^2\right\|_\infty\left\|  G_0(\lambda_n) \widetilde{P}_n^1\right\|_2.
\end{multline} 

To estimate the Hilbert-Schmidt norms, first note that
\beq \no
\begin{split}
2i {\rm Im} \left( G_0(\lambda_n)\right)_{j,j} &=\left( \left(J-\lambda_n \right)^{-1} \right)_{j,j}-\left( \left(J-\overline{\lambda_n} \right)^{-1} \right)_{j,j} \\
&=\left(\left(J-\lambda_n \right)^{-1}\left(\lambda_n-\overline{\lambda_n} \right) \left(J-\overline{\lambda_n} \right)^{-1}\right)_{j,j} \\
&=2i{\rm Im} \lambda_n \left( \left( J_0-\lambda_n \right)^{-1} \left( J_0-\overline{\lambda_n} \right)^{-1}\right)_{j,j}
\end{split}
\eeq
so that
\beq \label{eq:famous-formula}
\Im \left( G_0(\lambda_n) \right)_{j,j}=\Im \lambda_n \sum_{k=1}^\infty \left| \left( G_0(\lambda_n)  \right)_{j,k} \right|^2.
\eeq
It follows that
\begin{multline}
\no
\left \|  \widetilde{P}_n^1 G_0(\lambda_n)  \right \|_2^2= \left \|   G_0(\lambda_n)  \widetilde{P}_n^1 \right \|_2^2 = \sum_{j=[n-2mn^\beta]}^{[n+2mn^\beta]}\sum_{k=1}^\infty  \left| \left( G_0(\lambda_n)  \right)_{k,j} \right|^2\\=\frac{n^\alpha}{\Im\eta}\sum_{j=[n-2mn^\beta]}^{[n+2mn^\beta]}\Im \left( G_0(\lambda_n)  \right)_{j,j}.
\end{multline}
Combining this with  \eqref{eq:2ndresolventAB},  we immediately get
\beq \label{eq:esssss}
\begin{split}
& \left \| \widetilde{P}_n^1 G(\lambda_n)\widetilde{P}_n^2\left(J_0-J \right)\widetilde{P}_n^2G_0(\lambda_n)\widetilde{P}_n^1 \right \|_1 \\
&\quad \leq \frac{D n^\alpha}{\Im\eta} \left\|    \left(J_0-J \right) \widetilde{P}_n^2\right \|_\infty  \left ( \sum_{j=[n-2mn^\beta]}^{[n+2mn^\beta]}\Im \left( G(\lambda_n) \right)_{j,j} \right)
\end{split}
\eeq
which, when combined with \eqref{eq:measure-condition} and \eqref{eq:jacobi-perturb-cond2}, implies \eqref{eq:onenormresolvent}. This ends the proof of the theorem.
\end{proof}

We are now ready to prove Theorem \ref{thm:main-result} for the case $f(x)=\sum_{j=1}^n c_j \Im \frac{1}{\eta_j-x}$.

\begin{theorem} \label{thm:2.1-for-Green}
Let $\mu$ and $\mu_0$ be two measures with finite moments and denote by $\{a_n, b_n\}_{n=1}^\infty$ and $\{a_n^0,b_n^0\}_{n=1}^\infty$ the respective associated recurrence coefficients. Let $x_0 \in \bbR$ be such that there exists a neighborhood $x_0 \in I$ on which the following two conditions hold:\\
(i) $\mu_0$ restricted to $I$ is absolutely continuous with respect to Lebesgue measure and its Radon-Nikodym derivative is bounded there.  \\
(ii) The orthonormal polynomials for $\mu_0$ are uniformly bounded on $I$.\\
Assume further that
$$a_n-a_n^0=\mathcal O(n^{-\beta}), \qquad b_n-b_n^0=\mathcal O(n^{-\beta}),$$
as $n \to \infty$ for some $1>\beta>0$.

Let 
\beq \no
f(x)=\sum_{j=1}^N c_j  \Im \frac{1}{x-\eta_j}
\eeq
where $N \in \bbN$, $c_1, c_2 \ldots c_N \in \bbR$, and $\eta_1, \eta_2,\ldots,\eta_N \in \{\eta \in \bbC \mid \Im \eta>0\}$ are arbitrary.
Then for any $0<\alpha<\beta$ we have
\beq \label{eq:conclusion2}
\left|\bbE \left(X^{(n)}_{f,\alpha,x_0}-\bbE (X^{(n)}_{f,\alpha,x_0} \right)^m -\bbE_0 \left(X^{(n)}_{f,\alpha,x_0}-\bbE_0(X^{(n)}_{f,\alpha,x_0}\right)^m\right| \rightarrow 0
\eeq
as $n \rightarrow \infty$. Here $\bbE$ and $\bbE_0$ denote the expection with respect the orthogonal polynomial ensemble corresponding to $\mu$ and $\mu_0$ respectively.
\end{theorem}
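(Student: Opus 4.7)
The strategy is to combine Theorem~\ref{th:comparison-general} with Proposition~\ref{Jacobi-perturbation}, using hypotheses (i), (ii) to verify the local spectral estimate that Proposition~\ref{Jacobi-perturbation} requires. Since the centered $m$-th moment of $X^{(n)}_{f,\alpha,x_0}$ is a polynomial in its first $m$ cumulants (each of which is bounded uniformly in $n$---for $\mu_0$ by combining Lemma~\ref{lem:Concentration} with the hypotheses (i), (ii), and for $\mu$ by bootstrapping from $\mu_0$ via the comparison itself), it suffices to prove for every $m\geq 2$ that the cumulant differences vanish:
$$\bigl|\mathcal{C}_m^{(n)}(X^{(n)}_{f,\alpha,x_0})_\mu - \mathcal{C}_m^{(n)}(X^{(n)}_{f,\alpha,x_0})_{\mu_0}\bigr|\to 0.$$
Formula \eqref{eq:cumulant-spectral-form} identifies the $\mu$-cumulant with $\mathcal{C}_m^{(n)}(\mathcal{B}^{(n)})$ in the sense of \eqref{eq:newcumu}, where $\mathcal{B}^{(n)}=F(J;\vec\lambda_n,\vec c)/n^\alpha$ and $\lambda_{n,j}=x_0+\eta_j/n^\alpha$, and analogously $\mathcal{A}^{(n)}=F(J_0;\vec\lambda_n,\vec c)/n^\alpha$ on the $\mu_0$ side.

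Hypothesis~\ref{Hypothesis1} holds for both sequences: the resolvent bound $\|G(J;\lambda_{n,j})\|_\infty\leq (\Im\lambda_{n,j})^{-1}=n^\alpha/\Im\eta_j$ gives uniform operator-norm control, and Proposition~\ref{prop:CT} (applied to each $G(J;\lambda_{n,j})$ and to $G(J;\bar\lambda_{n,j})=G(J;\lambda_{n,j})^*$) yields exponential decay of entries at rate $n^{-\alpha}$. Choose any $\beta'\in(\alpha,\beta)$; then Theorem~\ref{th:comparison-general} reduces the cumulant comparison to proving
$$\frac{1}{n^\alpha}\Bigl\|P_{\{n\pm 2mn^{\beta'}\}}\bigl(F(J;\vec\lambda_n,\vec c)-F(J_0;\vec\lambda_n,\vec c)\bigr)P_{\{n\pm 2mn^{\beta'}\}}\Bigr\|_1\to 0.$$
Since $F(\lambda)=(G(\lambda)-G(\bar\lambda))/(2i)$, by linearity this further reduces to the analogous bound for each pair $(G(J;\lambda_{n,j}),G(J_0;\lambda_{n,j}))$ (and its conjugate), which is precisely the conclusion of Proposition~\ref{Jacobi-perturbation} at scale $\beta'$.

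It remains to verify the hypotheses of that proposition. Condition \eqref{eq:jacobi-perturb-cond2} is immediate from $a_n-a_n^0,b_n-b_n^0=\mathcal{O}(n^{-\beta})$: one has $\|(J-J_0)P_{\{n\pm 6mn^{\beta'}\}}\|_\infty=\mathcal{O}(n^{-\beta})=o(n^{-\beta'})$ because $\beta>\beta'$. The main obstacle is the local spectral estimate \eqref{eq:measure-condition}. Using $e_j=p_{j-1}^0(J_0)e_1$ to rewrite the diagonal entries via the scalar spectral measure, one obtains
$$\Im G_0(\lambda_n)_{j,j}=\Im\lambda_n\int_{\bbR}\frac{p_{j-1}^0(x)^2}{(x-\Re\lambda_n)^2+(\Im\lambda_n)^2}\,d\mu_0(x).$$
I will split this integral at the boundary of $I$. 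On $I$, hypothesis (i) bounds $d\mu_0$ by a multiple of Lebesgue measure and hypothesis (ii) gives $|p_{j-1}^0|\leq M$, so the integrand is at most $CM^2$ times a Poisson kernel, whose total integral is $\leq\pi$; this bounds the $I$-contribution by a constant independent of $j$ and $n$. On $\bbR\setminus I$ the denominator is bounded below by $\mathrm{dist}(x_0,I^c)^2/4$ for all large $n$, and $\Im\lambda_n=\mathcal{O}(n^{-\alpha})$ combined with $\|p_{j-1}^0\|_{L^2(\mu_0)}=1$ gives a contribution of $\mathcal{O}(n^{-\alpha})$. Hence $\Im G_0(\lambda_n)_{j,j}=\mathcal{O}(1)$ uniformly in $j$, and summing over the $\mathcal{O}(n^{\beta'})$ relevant indices proves \eqref{eq:measure-condition}. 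Proposition~\ref{Jacobi-perturbation} then delivers the trace-norm convergence and closes the argument.
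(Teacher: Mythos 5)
Your proof is correct and follows essentially the same route as the paper: reduce moments to cumulants, invoke Theorem~\ref{th:comparison-general} with $\mathcal{A}^{(n)},\mathcal{B}^{(n)}$ equal to the normalized imaginary parts of the resolvents, and then verify conditions \eqref{eq:measure-condition} and \eqref{eq:jacobi-perturb-cond2} of Proposition~\ref{Jacobi-perturbation} at a scale $\beta'\in(\alpha,\beta)$, using the spectral representation of the diagonal Green's function entries and the split of the integral at $\partial I$ exactly as in the paper. Your explicit remark that the moment-to-cumulant reduction also needs uniform-in-$n$ bounds on the cumulants (so that the polynomial relations between moments and cumulants remain stable) is a minor point the paper treats implicitly, but it does not change the argument.
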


\begin{proof}
First we note that \eqref{eq:conclusion2} is equivalent to 
\beq \no
\left|\calC_m^{(n)} \left(X^{(n)}_{f,\alpha,x_0}\right) -\calC_{m,0}^{(n)} \left(X^{(n)}_{f,\alpha,x_0}\right) \right| \rightarrow 0, \qquad m\geq 2
\eeq
as $n \rightarrow \infty$, where $\calC_{m,0}^{(n)}$ (resp., $\calC_m^{(n)}$) are the cumulants with respect to $\mu_0$ (resp., $\mu$).

Further, letting $J_0$ (resp., $J$) be the Jacobi matrix corresponding to $\mu_0$ (resp., $\mu$), we note that the matrices $f\left(n^\alpha(J_0-x_0) \right)$ and $f\left(n^\alpha(J-x_0) \right)$ satisfy Hypothesis \ref{Hypothesis1} (by Proposition \ref{prop:CT} and since $\min_j \Im \eta_j>0$). Thus, by \eqref{eq:cumulant-spectral-form1} and Theorem \ref{th:comparison-general}, we see that
\beq \no
\begin{split}
& \left|\calC_m^{(n)} \left(X^{(n)}_{f,\alpha,x_0} \right) - \calC_{m,0}^{(n)} \left( X^{(n)}_{f,\alpha,x_0} \right) \right| \\
&\quad \leq \frac{C(m)}{n^\alpha} \left \| P_{\left \{n-2mn^\beta,n+2mn^\beta \right\}} \left( \sum c_j \Im G^{(n,\lambda_j)}-\sum c_j\Im G^{(n,\lambda_j)}_0 \right) P_{\left \{n-2mn^\beta,n+2mn^\beta\right \}} \right \|_1\\
&\qquad+o(1),
\end{split}
\eeq
where $\lambda_j=\lambda_{j,n}=x_0+\frac{\eta_j}{n^\alpha}$.
Therefore, using the triangle inequality, we now see that it suffices to prove that the conditions of the theorem imply those of Proposition \ref{Jacobi-perturbation} for any $m$ and for some $\beta'>\alpha$.

Fix $m$ and $\beta>\beta'>\alpha$. We shall show that \eqref{eq:measure-condition} and \eqref{eq:jacobi-perturb-cond2} hold for $\beta'$. To prove \eqref{eq:jacobi-perturb-cond2} note that by the conditions of the theorem
\beq \no
n^{\beta'} \left \|  \left(J-J_0 \right) P_{\left \{ n-6mn^\beta,n+6mn^\beta \right \} } \right \| \leq C(m) n^{\beta'} \left(n-6mn^{\beta'} \right)^{-\beta} \rightarrow 0
\eeq
as $n \rightarrow \infty$.

To prove \eqref{eq:measure-condition}, let $\{p_j\}_{j=0}^\infty$ be the orthonormal polynomials with respect to $\mu_0$, and recall that 
\beq \label{eq:boundednessgfunction}
\begin{split}
\Im \left( G^{n,\lambda}_0 \right)_{j,j}&= \Im \int \frac{|p_j(x)|^2 \textrm{d} \mu_0(x)}{x-x_0-\frac{\eta}{n^\alpha}}\\
&= \frac{\Im\eta}{n^\alpha}\int \frac{|p_j(x)|^2 \textrm{d} \mu_0(x)}{\left(x-x_0-\frac{\textrm{Re}\eta}{n^\alpha} \right)^2+\frac{\Im\eta^2}{n^{2\alpha}}} \\
&= \frac{\Im\eta}{n^\alpha} \int_{\bbR \setminus I}+\frac{\Im\eta}{n^\alpha}\int_{I}.
\end{split}
\eeq
The first integral in the sum vanishes as $n \rightarrow \infty$, whereas, by the uniform boundedness of the $p_j$ on $I$, the second integral is bounded by a constant times the Poisson transform of $\textrm{d}\mu_0$ restricted to $I$. This, by condition $(i)$ of the theorem, is bounded. This proves \eqref{eq:measure-condition} and ends the proof of the theorem.
\end{proof}

Before we end this section we note the following additional consequence of the arguments above, which will come in handy later on.

\begin{proposition}\label{prop:comparinggreen} Let $\mu_0$ and $\mu$  be as in Theorem \ref{thm:2.1-for-Green}. Then for any fixed $j,k \in \bbZ$  
$$\left(G(\lambda_n)\right)_{n+j,n+k}-\left(G_0(\lambda_n)\right)_{n+j,n+k} \to 0,$$
as $n \rightarrow \infty$. 
\end{proposition}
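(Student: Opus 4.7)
The plan is to apply the resolvent identity twice, writing
$$G(\lambda_n)-G_0(\lambda_n)=G_0(\lambda_n)(J_0-J)G_0(\lambda_n)+G_0(\lambda_n)(J_0-J)G(\lambda_n)(J_0-J)G_0(\lambda_n),$$
and then to show directly that each of these two terms contributes $o(1)$ to the $(n+j,n+k)$-entry. The reason to iterate twice, rather than work with the single-step identity $G-G_0=G(J_0-J)G_0$, is that it places factors of $G_0$ (and not $G$) at both endpoints; this is crucial because the sharp $\ell^2$-norm bound on $G_0(\lambda_n)e_\ell$ is available only for $G_0$, as explained in the next paragraph.

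The key ingredient is the estimate $\|G_0(\lambda_n)e_{n+k}\|=O(n^{\alpha/2})$. Indeed, by \eqref{eq:famous-formula},
$$\|G_0(\lambda_n)e_{n+k}\|^2=\sum_{r}\bigl|(G_0(\lambda_n))_{r,n+k}\bigr|^2=\frac{\Im(G_0(\lambda_n))_{n+k,n+k}}{\Im\lambda_n},$$
and the computation in \eqref{eq:boundednessgfunction} together with hypotheses $(i)$ and $(ii)$ of Theorem \ref{thm:2.1-for-Green} (the bounded Radon-Nikodym derivative of $\mu_0$ on $I$ and the uniform boundedness of the $p_j$ on $I$) shows that $\Im(G_0(\lambda_n))_{n+k,n+k}=O(1)$, whence the claim; the same holds for $G_0(\overline{\lambda_n})e_{n+j}$. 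Setting $u=G_0(\overline{\lambda_n})e_{n+j}$ and $v=G_0(\lambda_n)e_{n+k}$, Proposition \ref{prop:CT} furthermore gives the Combes-Thomas concentration near index $n$: with $\widetilde P=P_{\{n-Nn^{\beta'},n+Nn^{\beta'}\}}$ for any fixed $\beta'\in(\alpha,\beta)$ and $N$ sufficiently large, both $\|(I-\widetilde P)u\|$ and $\|(I-\widetilde P)v\|$ are exponentially small in $n^{\beta'-\alpha}$.

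With these tools the estimates are immediate. The first term equals $\langle u,(J_0-J)v\rangle$, and since $\widetilde P(J_0-J)\widetilde P$ is a tridiagonal matrix whose nonzero entries $a_r-a_r^0,b_r-b_r^0$ all have $r\geq n-Nn^{\beta'}$, the decay hypothesis on the recurrence coefficients gives $\|\widetilde P(J_0-J)\widetilde P\|_\infty=O(n^{-\beta})$; after truncating $u,v$ to $\widetilde P u,\widetilde P v$ (with exponentially small error) we obtain
$$|\langle u,(J_0-J)v\rangle|\leq \|u\|\,\|\widetilde P(J_0-J)\widetilde P\|_\infty\,\|v\|+o(1)=O(n^{\alpha-\beta})\to 0.$$
For the second term, the middle factor $G(\lambda_n)$ contributes $\|G(\lambda_n)\|_\infty\leq 1/\Im\lambda_n=O(n^\alpha)$, and the same truncation argument (and the bound $\|(J_0-J)\widetilde P\|_\infty=O(n^{-\beta})$) yields
$$|\langle u,(J_0-J)G(\lambda_n)(J_0-J)v\rangle|\leq O(n^{\alpha/2})\cdot O(n^{-\beta})\cdot O(n^\alpha)\cdot O(n^{-\beta})\cdot O(n^{\alpha/2})=O(n^{2(\alpha-\beta)})\to 0$$
since $\alpha<\beta$. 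Adding the two bounds proves the proposition.

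The potential obstacle is the lack of a sharp analogue of the Poisson-transform bound for the perturbed resolvent $G(\lambda_n)$, which gives only the crude $\|G(\lambda_n)e_\ell\|=O(n^\alpha)$. This forces us to use the double-iterated form of the resolvent identity so that no inner product involves $G(\lambda_n)e_{n+k}$ directly; the price is a factor $\|G(\lambda_n)\|_\infty=O(n^\alpha)$ in the second term, but this is compensated by an extra factor $\|(J_0-J)\widetilde P\|_\infty=O(n^{-\beta})$, which is precisely what makes the argument close under the hypothesis $\alpha<\beta$.
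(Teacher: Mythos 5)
Your proof is correct and is essentially the paper's own argument, just presented at the level of the single matrix entry (via vectors and inner products) rather than via the trace-norm framework of Proposition \ref{Jacobi-perturbation} with a fixed-width projection $\widetilde P_n^1 = P_{\{n-M,n+M\}}$. The key ingredients are identical: iterating the second resolvent identity so that $G_0$ (not $G$) sits at both ends, using \eqref{eq:famous-formula} to turn $\|G_0(\lambda_n)e_{n+k}\|^2$ into $\Im(G_0(\lambda_n))_{n+k,n+k}/\Im\lambda_n = O(n^\alpha)$ via the local regularity of $\mu_0$ (cf.\ \eqref{eq:boundednessgfunction}), and the Combes--Thomas estimate to truncate to a window near $n$ where $\|(J_0-J)\widetilde P\|_\infty = O(n^{-\beta})$. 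Your explicit explanation of \emph{why} the double iteration is needed — so that no term requires the sharp $\ell^2$-bound for the perturbed resolvent $G$, which is not available — is exactly the mechanism the paper uses when it passes from \eqref{eq:2ndresolventAA} to \eqref{eq:esssss}.
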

\begin{proof}
First note that 
$$\left|\left(G(\lambda_n)\right)_{n+j,n+k}-\left(G_0(\lambda_n)\right)_{n+j,n+k} \right|= \left\| \widehat{P}^1_n  (G(\lambda_n)-G_0(\lambda_n)) \bar P_n^1\right\|_1,$$
where $\widehat P^1_n = P_{\left\{n+j-1,n+j-1\right\}}$ and $\bar{ P}_n^1 = P_{\left\{n+k-1,n+k-1\right\}}.$ Now let
\beq \no
\widetilde{P}_n^1=P_{\{n-M,n+M \} }
\eeq
where $M \geq \max(|j|,|k|)$. and note that $\widehat{P}^1_n=\widehat{P}^1_n \widetilde{P}^1_n$ and $\bar P_n^1=\widetilde{P}^1_n \bar P_n^1$. Thus
\beq \no
\left\| \widehat{P}^1_n  (G(\lambda_n)-G_0(\lambda_n)) \bar P_n^1\right\|_1\leq \left\| \widetilde{P}^1_n  (G(\lambda_n)-G_0(\lambda_n)) \widetilde{P}_n^1 \right\|_1.
\eeq

Now, we know from the proof of Theorem \ref{thm:2.1-for-Green} that \eqref{eq:measure-condition} and \eqref{eq:jacobi-perturb-cond2} hold and so we may repeat the proof of Proposition \ref{Jacobi-perturbation} with $\widetilde{P}_n^1$ as defined here and everything else unchanged to see that 
\beq \no
 \left\| \widetilde{P}^1_n  (G(\lambda_n)-G_0(\lambda_n)) \widetilde{P}_n^1 \right\|_1 \leq \frac{D n^\alpha}{\Im\eta} \left\|    \left(J_0-J \right) \widetilde{P}_n^2\right \|_\infty  \left ( \sum_{j=n-M}^{n+M}\Im \left( G(\lambda_n) \right)_{j,j} \right).
\eeq
But \eqref{eq:boundednessgfunction} and the discussion following it show that $\left ( \sum_{j=n-M}^{n+M}\Im \left( G(\lambda_n) \right)_{j,j} \right)$ is bounded and so the right hand side vanishes as $n \rightarrow \infty$, proving the claim.
\end{proof}


\section{The free Jacobi operator and the proof of Theorem \ref{thm:CLTgeneral} in case $f(x)= \sum_{j=1}^N c_j \Im \frac{1}{x-\eta_j}$} \label{sec:CLTfree}

In order to deduce a general Central Limit Theorem from Theorem \ref{th:comparison-general} we need a model for which a Central Limit can be proved directly.  We therefore consider in this section the special case of $\textrm{d}\mu_0(x) =\frac{\sqrt{4-x^2}}{2 \pi}\chi_{[-2,2]}(x)\textrm{d}x$. The associated Jacobi operator in this case is known as the free Jacobi operator and has the simple form $$a_n \equiv 1 \text{   and   } b_n \equiv 0.$$  By exploiting the fact that the free Jacobi operator is also a Toeplitz operator, we will prove Theorem \ref{thm:CLTgeneral} for  functions $f$ of the form  (cf. \eqref{eq:specialclass}) 
\beq \label{eq:defffree}
f(x)=\sum_{j=1}^N c_j  \Im \frac{1}{x-\eta_j}
\eeq
where $N \in \bbN$, $c_1, c_2 \ldots c_N \in \bbR$, and $\eta_1, \eta_2,\ldots,\eta_N \in \{\eta \in \bbC \mid \Im \eta>0\}$ are arbitrary. At the end of the section we then combine this Central Limit Theorem  for the free Jacobi operator with Theorem \ref{th:comparison-general} and obtain Theorem \ref{thm:CLTgeneral} for such functions.

\subsection{Reduction to a Toeplitz operator}
Recall (\eqref{eq:defgreen}, \eqref{eq:defImaggreen} and \eqref{eq:lincomImaggreen}) we use $G(\lambda)$ for the resolvent, $ F(\lambda)$ for the imaginary part of the resolvent and $F(\vec{\lambda},\vec{c})= \sum_{j=1}^N c_j F(\lambda_j)$ for a general linear combination of the imaginary part of the resolvent at various points. 

In the free case, it is well-known (and easy to verify) that 
\begin{equation}\label{eq:greendsfree}
(G(\lambda))_{j,k}= \frac{\omega(\lambda)^{|j-k|}-\omega(\lambda)^{j+k}}{\omega(\lambda)-1/\omega(\lambda)}, \qquad j,k=1,2,\dots
\end{equation}
where 
$$\omega(\lambda)= \frac{\lambda-\sqrt{\lambda^2-4}}{2},$$
where the square root is taken such that $\lambda \to \omega(\lambda)$ is analytic in $\bbC \setminus[-2,2]$ and $\omega(\lambda)= \mathcal O(1/\lambda)$ as $\lambda \to \infty$. Note that this map maps  $\bbC \setminus[-2,2]$ onto  the open unit disk and hence $|\omega(\lambda)|<1$. 

Given a Laurent series $a(z)= \sum_{k} a_k z^k$, recall that the associated Toeplitz matrix $T(a)$ and Hankel matrix $ H(a)$ are defined as the one-sided infinite matrices
$$\left(T(a)\right)_{j,k}=a_{j-k}, \quad \text{and} \quad  \left(H(a)\right)_{j,k}=a_{j+k-1}, \qquad j,k=1,2,3,\ldots$$  From \eqref{eq:greendsfree} we thus see that  the resolvent $G(\lambda)$ for the free Jacobi operator can be written as a  sum of a Toeplitz and a Hankel operator, and hence also 
$$F(\lambda)= T(a_\lambda)+ H(b_\lambda)$$
where 
\begin{multline}\label{eq:defal} a_\lambda(z) =\sum_{j\in \bbZ} \left( \Im \frac{\omega(\lambda)^{|j|}}{\omega(\lambda)-\omega(\lambda)^{-1}}   \right) z^j \\
=\frac{1}{2 {\rm i}} \left(\frac{z}{(z-\omega(\lambda))(z-1/\omega(\lambda))}-\frac{z}{(z-\omega\left(\overline{\lambda} \right))\left(z-1/\omega\left(\overline{\lambda} \right) \right)}\right),\end{multline}
and 
\begin{multline*}
b_\lambda(z) =-z^{-1} a_\lambda(z)\\=-\frac{1}{2 {\rm i}} \left(\frac{ 1}{\left(z-\omega(\lambda) \right) \left(z-1/\omega(\lambda) \right)}-\frac{1}{\left(z-\omega\left(\overline{\lambda} \right) \right)\left(z-1/\omega\left(\overline{\lambda} \right) \right)}\right).
\end{multline*}
For $N\in \bbN, \eta_1,\ldots,\eta_N \in \bbH_+= \{ \eta \in \bbC \mid \Im \eta>0\}$ and $c_1,\ldots,c_N \in \bbR$ we set
$$\lambda_j=\lambda_j(n)= x_0+ \frac{\eta_j}{n^\alpha}$$ 
and define 
\begin{equation}\label{eq:defphifree}
\phi^{(n)}= \frac{1}{n^\alpha} \sum_{j=1}^N c_j a_{\lambda_j}.
\end{equation}
Note that 
\begin{equation}\label{eq:Ftoeplitzplushankel} 
\frac{1}{n^\alpha} F(\vec \lambda,\vec c) = T(\phi^{(n)})+H(-z^{-1}\phi^{(n)}).
\end{equation}

Clearly, Proposition \ref{prop:CT}, together with the fact that $\|G(\lambda) \|_\infty \leq \frac{1}{\textrm{Im}\lambda}$, implies that  both $\frac{1}{n^\alpha}G\left(\vec \lambda, \vec c \right)$ and $\frac{1}{n^\alpha}F\left(\vec \lambda, \vec c \right)$ satisfy Hypothesis \ref{Hypothesis1} as long as $\eta_1, \eta_2,\ldots,\eta_N \in$ a compact subset of $\bbH_+$. 
In fact, the following lemma shows that in the free case a substantially stronger bound is true by giving a useful bound on the coefficients of $\phi^{(n)}$. 
\begin{lemma}
Let $x_0 \in (-2,2)$, $0<\alpha<1$ and fix a compact $K\subset \bbH_+=\{\eta \in \bbC \mid \Im \eta>0\}$. Write $\phi^{(n)}= \sum_{k\in \bbZ} \phi^{(n)}_k z^k$.  Then there exist $d_1,d_2>0$ such that 
\begin{equation}\label{eq:freeCT}
|\phi^{(n)}_k| \leq  \frac{d_1}{n^\alpha} \left(\sum_{j=1}^N|c_j|\right) \exp(-d_2 |k| /n^\alpha),\end{equation}
for $k\in \bbZ,n,N\in \bbN$,  $\eta_1,\ldots,\eta_N \in K$ and $c_1,\ldots,c_N \in  \bbR$. In particular, $n^{-\alpha} G$ and $ n^{-\alpha} F$  satisfy Hypothesis \ref{Hypothesis1}.
\end{lemma}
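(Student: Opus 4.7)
The plan is to work directly from the explicit formula \eqref{eq:defal} for $a_\lambda$, which after expanding in Laurent series gives
\beq \no
(a_\lambda)_k = \Im \frac{\omega(\lambda)^{|k|}}{\omega(\lambda)-\omega(\lambda)^{-1}}, \qquad k \in \bbZ.
\eeq
So by \eqref{eq:defphifree} and the triangle inequality,
\beq \no
|\phi^{(n)}_k| \leq \frac{1}{n^\alpha} \sum_{j=1}^N |c_j| \cdot \frac{|\omega(\lambda_j)|^{|k|}}{|\omega(\lambda_j)-\omega(\lambda_j)^{-1}|}.
\eeq
The whole estimate therefore reduces to two analytic facts about the map $\lambda \mapsto \omega(\lambda)$ near the bulk point $x_0 \in (-2,2)$: a uniform upper bound $|\omega(\lambda_j)| \leq 1 - c/n^\alpha$, and a uniform lower bound $|\omega(\lambda_j)-\omega(\lambda_j)^{-1}| \geq c'>0$.

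For the first bound I would parametrize $\lambda = 2\cos\theta$, so that $\omega(\lambda) = e^{-{\rm i} \theta}$ with the branch chosen so that $\Im \lambda > 0$ corresponds to $\Im \theta<0$, giving $|\omega(\lambda)| = e^{\Im \theta}$. Writing $x_0 = 2 \cos \theta_0$ with $\theta_0 \in (0,\pi)$ (which is possible precisely because $x_0 \in (-2,2)$), a first-order expansion around $\theta_0$ gives
\beq \no
\theta(\lambda_j) - \theta_0 = -\frac{\eta_j/n^\alpha}{2 \sin \theta_0} + O(n^{-2\alpha}),
\eeq
and hence $|\omega(\lambda_j)| = \exp\!\left(-\frac{\Im \eta_j}{2 n^\alpha \sin\theta_0} + O(n^{-2\alpha})\right)$. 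Since $K \subset \bbH_+$ is compact, $\Im \eta_j$ is bounded below by a positive constant, so this yields $|\omega(\lambda_j)|^{|k|} \leq e^{-d_2 |k|/n^\alpha}$ for $n$ large enough and some $d_2>0$ independent of $j$, $N$ and the $c_j$. The second (lower) bound is easier: $\omega(\lambda) - \omega(\lambda)^{-1} = -\sqrt{\lambda^2-4}$, which is continuous and nonzero at $\lambda = x_0 \neq \pm 2$, so it is bounded away from zero uniformly in $j$ and for all sufficiently large $n$. Combining the two estimates yields \eqref{eq:freeCT}.

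For the final claim that $n^{-\alpha}G$ and $n^{-\alpha}F$ satisfy Hypothesis \ref{Hypothesis1}, I would use \eqref{eq:Ftoeplitzplushankel} and observe that the entries of $T(\phi^{(n)})$ decay as $|\phi^{(n)}_{j-k}|$, while those of $H(-z^{-1}\phi^{(n)})$ decay as $|\phi^{(n)}_{j+k}|$; since $j+k \geq |j-k|$ for $j,k \geq 1$, \eqref{eq:freeCT} gives the required exponential off-diagonal decay for $F$. The analogous argument works for $G$ starting from \eqref{eq:greendsfree}. The uniform operator-norm bound follows from $\|G(\lambda)\|_\infty \leq (\Im \lambda)^{-1}$, so $n^{-\alpha}\|F(\vec \lambda, \vec c)\|_\infty \leq \sum_j |c_j|/\Im \eta_j$, which is bounded by compactness of $K$.

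The main obstacle is the quantitative estimate on $|\omega(\lambda_j)|$: one must use both $x_0 \in (-2,2)$ (so that $\omega(x_0)$ sits on the unit circle away from $\pm 1$, where the square root in the definition of $\omega$ is smooth) and compactness of $K$ (to get $\Im \eta_j$ bounded below uniformly). Away from the bulk ($x_0 = \pm 2$), the expansion of $\theta(\lambda)$ around $\theta_0$ degenerates, and the factor $1/\sin\theta_0$ would blow up, so the proof genuinely requires the bulk assumption.
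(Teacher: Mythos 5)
Your proposal is correct and takes essentially the same approach as the paper: reduce to a single term by the triangle inequality, extract the decay from a first-order expansion of $\omega(\lambda_j)$ around $x_0$ (the paper does this directly from \eqref{eq:asymptoticsomega}, you via the equivalent parametrization $\lambda=2\cos\theta$, $\omega=e^{-{\rm i}\theta}$, and your leading term $\exp(-\Im\eta_j/(2n^\alpha\sin\theta_0))$ matches the paper's $1-\Im\eta/(n^\alpha\sqrt{4-x_0^2})$ since $2\sin\theta_0=\sqrt{4-x_0^2}$), and bound $|\omega(\lambda_j)-\omega(\lambda_j)^{-1}|=|\sqrt{\lambda_j^2-4}|$ away from zero using $x_0\neq\pm2$. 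Your additional remarks on why $n^{-\alpha}G$ and $n^{-\alpha}F$ satisfy Hypothesis~\ref{Hypothesis1} (Toeplitz plus Hankel structure and $j+k\ge|j-k|$, together with the resolvent norm bound) correctly spell out what the paper leaves implicit.
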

\begin{proof}
By the triangle inequality, it is sufficient to deal with the case $N=1$. Hence we consider $\phi^{(n)}=c a_{\lambda(n)}$.

To prove \eqref{eq:freeCT} we first note that,  
\begin{equation}\label{eq:asymptoticsomega}\omega(\lambda)= \begin{cases}
\frac{x_0 - {\rm i}  \sqrt{4-x_0^2}}{2}\left(1+ \frac{{\rm i } \eta }{n^{\alpha} \sqrt{4-x_0^2} } + \mathcal O(n^{-2\alpha})\right),& \Im \eta>0\\
\frac{x_0+ {\rm i}  \sqrt{4-x_0^2}}{2}\left(1- \frac{{\rm i } \eta }{n^{\alpha} \sqrt{4-x_0^2} } + \mathcal O(n^{-2\alpha})\right), & \Im \eta<0
\end{cases} 
\end{equation}
as $n \to \infty$, uniformly for $\eta \in K$. Thus there exist constants $d_1,d_2>0$  such that 
 such that 
$$|\omega(\lambda)|\leq 1-\frac{d_1}{n^\alpha} \leq \exp(-d_2/n^\alpha), $$
and $1/|\omega(\lambda)-1/\omega(\lambda)|\leq d_1$. 
By inserting these inequalities into  \eqref{eq:defal}, we obtain \eqref{eq:freeCT}. 
\end{proof}

Using the notation \eqref{eq:newcumu}, we see from Theorem \ref{th:comparison-general} 
that for $m \in \bbN$ we have
\beq \label{eq:HankelThrow}
\begin{split}
& \left|C_m^{(n)} \left (\frac{1}{n^\alpha} F(\vec \lambda, \vec c) \right)-C_m^{(n)} \left( T(\phi^{(n)}) \right) \right| \\ 
&\quad \leq C(m) \left \| P_{\left \{ n-2mn^\beta,n+2mn^\beta \right \}} H(z\phi^{(n)}) P_{\left \{ n-2mn^\beta,n+2mn^\beta \right \}} \right \|_1+o(1) \\
&\quad \leq  \widetilde{C}(m) n^{2\beta} e^{d_2 n^{\beta-\alpha}}e^{-d_2 n^{1-\alpha}}=o(1)
\end{split}
\eeq
as $n \to \infty$, by \eqref{eq:Ftoeplitzplushankel} and \eqref{eq:freeCT}.
 
Our problem is thus reduced to computing the limits of $C_m^{(n)} \left(T \left(\phi^{(n)} \right) \right)$. Repeating the calculation at the beginning of Section \ref{sec:DetStruct} and keeping in mind that $C_1^{(n)} \left(T \left(\phi^{(n)} \right) \right) =\mathrm{Tr} P_n T \left(\phi^{(n)}\right)$ we see that 
\begin{multline}\label{eq:cumultoepl}
\exp\left(\sum_{m=2}^\infty t^m C_m^{(n)} \left(T \left(\phi^{(n)}\right)\right)  \right) \\
 =\det\left(I+ P_n\left({\rm e}^{ t T\left(\phi^{(n)}\right)}-I \right)P_n\right)  {\rm e}^{-t \Tr P_n T \left(\phi^{(n)}\right)}.
 \end{multline}
We will proceed by analyzing the asymptotic behavior of the Fredholm determinant at the right-hand side.

\subsection{A Fredholm determinant identity}

The first step in analyzing the right-hand side of \eqref{eq:cumultoepl} is a rewriting of the determinant. The arguments given in this subsection are based on ideas that lie behind the Strong Szeg\H{o} Limit Theorem for Toeplitz determinants.

Let us consider a general symbol $\phi(z)= \sum_j \phi_j z^j$. Split $T(\phi)$ as $$T(\phi)= T(\phi_{+}) + T(\phi_{-}),$$
where 
$\phi_{+}=\sum_{j\geq 0 } \phi_j  z^j $  and $\phi_{-}= \sum_{j <0 } \phi_j z^j.$ 
\begin{lemma}
We have that 
 \begin{multline}\label{eq:Toeplitztickdet}
 \det\left(I+ P_n({\rm e}^{ t T(\phi)}-I)P_n\right)  {\rm e}^{-\mathrm{Tr} P_nT(\phi)}\\=\det\left(I+ P_n \left({\rm e}^{-t T(\phi_{+} ) }{\rm e}^{ t T(\phi)}{\rm e}^{-t T(\phi_{-} ) }-I \right)\right) . 
 \end{multline}
 \end{lemma}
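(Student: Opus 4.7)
The plan is to use the triangular structure of $T(\phi_+)$ and $T(\phi_-)$ to carry out a ``matrix LDU factorization'' of $P_n e^{t T(\phi)} P_n$ on the range of $P_n$, which is a finite dimensional subspace.

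First, I would note the two triangular compatibility relations with $P_n$. Because $\phi_+ = \sum_{j\ge 0} \phi_j z^j$, the matrix $T(\phi_+)$ is lower triangular, so every power $T(\phi_+)^k$, and hence the exponential $e^{tT(\phi_+)}$, is lower triangular. A direct check of entries gives
\beq\no
P_n e^{tT(\phi_+)} = P_n e^{tT(\phi_+)} P_n.
\eeq
Symmetrically, since $\phi_- = \sum_{j<0} \phi_j z^j$ has no constant term, $T(\phi_-)$ is \emph{strictly} upper triangular, hence $e^{tT(\phi_-)}$ is upper triangular with ones on the diagonal, and
\beq\no
e^{tT(\phi_-)}P_n = P_n e^{tT(\phi_-)} P_n.
\eeq

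Next, let $M = e^{-tT(\phi_+)}e^{tT(\phi)}e^{-tT(\phi_-)}$, so by construction $e^{tT(\phi)} = e^{tT(\phi_+)}\, M\, e^{tT(\phi_-)}$. Using the two identities above, I would insert projections to obtain
\beq\no
P_n e^{tT(\phi)}P_n \;=\; \bigl(P_n e^{tT(\phi_+)}P_n\bigr)\bigl(P_n M P_n\bigr)\bigl(P_n e^{tT(\phi_-)} P_n\bigr),
\eeq
an identity of operators whose non-trivial action lives entirely on the $n$-dimensional subspace $\ran P_n$.

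Now I would take determinants. Since $1 + P_n(e^{tT(\phi)}-I)P_n$ is the identity on $(\ran P_n)^\perp$ and equals $P_n e^{tT(\phi)}P_n$ on $\ran P_n$, its Fredholm determinant equals the $n\times n$ determinant of the factorization above. The two triangular factors have explicit determinants: $T(\phi_+)$ has $\phi_0$ on the diagonal, so $\det(P_n e^{tT(\phi_+)}P_n|_{\ran P_n}) = e^{tn\phi_0} = e^{t\Tr P_n T(\phi)}$, while $T(\phi_-)$ has zero diagonal, giving determinant $1$. Combining this with $\det(I + P_n(M-I)P_n) = \det(I + P_n(M-I))$ by Sylvester's identity, I arrive at \eqref{eq:Toeplitztickdet}. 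The only technical point worth checking is that $P_n(M-I)$ is trace class (it is finite rank, since $P_n$ is), so all Fredholm determinants in the argument are well defined and the determinant-multiplication rule applies; this is routine rather than a real obstacle.
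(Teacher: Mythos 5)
Your proof is correct and rests on exactly the same observations as the paper's: the triangularity of $T(\phi_\pm)$, the compatibility relations $P_n e^{tT(\phi_+)} = P_n e^{tT(\phi_+)}P_n$ and $e^{tT(\phi_-)}P_n = P_n e^{tT(\phi_-)}P_n$, and the resulting LDU-type factorization of $P_n e^{tT(\phi)}P_n$ on $\ran P_n$. The paper organizes the same computation by conjugating inside the Fredholm determinant with $e^{-tP_n T(\phi_\pm)P_n}$ (which it writes as $I - P_n + P_n e^{-tT(\phi_\pm)}P_n$) and then factoring the determinant at the end, whereas you factor first and evaluate the three $n\times n$ determinants directly; this is a presentational difference, not a different route. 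One tiny remark: the lemma as printed drops a factor of $t$ in the exponent, which should read $e^{-t\,\mathrm{Tr}\,P_n T(\phi)}$ as in the identity \eqref{eq:cumultoepl} it is applied to; your derivation correctly produces $e^{tn\phi_0} = e^{t\,\mathrm{Tr}\,P_n T(\phi)}$.
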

 \begin{proof}
  The idea is standard and in this form has been used in, for example, \cite{BW,BD-CLT}. For completeness we include a proof.
  
  Since $T(\phi_+)$ is lower triangular we have $P_n T(\phi_+) P_n = P_n T(\phi_+)$ and hence also 
  $$P_n {\rm e}^{-t  T(\phi_+) }P_n= P_n{\rm e}^{-t  T(\phi_+) },$$
  and
  $${\rm e}^{-t P_n T(\phi_+) P_n}=(I-P_n) + P_n{\rm e}^{- tT(\phi_+)}P_n.$$
  In the same way, since $T(\phi_{-})$ is upper triangular we have 
   $$P_n {\rm e}^{-t  T(\phi_-) }P_n= {\rm e}^{-t  T(\phi_-) }P_n,$$
   and 
$$  {\rm e}^{-t P_n T(\phi_-) P_n}=(I-P_n) +P_n {\rm e}^{- t T(\phi_-)}P_n.$$
From these identities we find 
\begin{multline*}
{\rm e}^{- t P_n T(\phi_+) P_n}\left(I+ P_n({\rm e}^{ t T(\phi)}-I)P_n\right)  {\rm e}^{- t P_n T(\phi_-) P_n}\\
=\left(I-P_n+ P_n {\rm e}^{- t T(\phi_+) } P_n\right)\left(I-P_n+  P_n{\rm e}^{ t T(\phi)} P_n\right)  \left(I-P_n+ P_n {\rm e}^{- t T(\phi_-)P_n}\right)
\\
= I-P_n +  P_n {\rm e}^{- t T(\phi_+) } P_n{\rm e}^{ t T(\phi)} P_n {\rm e}^{- t T(\phi_-)} P_n \\
=  I-P_n +  P_n {\rm e}^{- t T(\phi_+)} {\rm e}^{ t T(\phi)}  {\rm e}^{- t T(\phi_-)} P_n\\
=I+ P_n \left({\rm e}^{- T(\phi_+) }   {\rm e}^{ T(\phi) } {\rm e}^{- T(\phi_-) }-I\right)P_n.
\end{multline*}
Therefore,
\begin{multline}\label{eq:Toepltiztickdethulp}
\det \left({\rm e}^{- t P_n T(\phi_+) P_n}\left(I+ P_n({\rm e}^{ t T(\phi)}-I)P_n\right)  {\rm e}^{- t P_n T(\phi_-) P_n} \right)\\
= \det \left(I+ P_n \left({\rm e}^{- T(\phi_+) }   {\rm e}^{ T(\phi) } {\rm e}^{- T(\phi_-) }-I\right)P_n\right).
\end{multline}
By factorizing the the determinant on the left-hand side we find
\begin{multline*}
\det \left({\rm e}^{- t P_n T(\phi_+) P_n}\left(I+ P_n({\rm e}^{ t T(\phi)}-I)P_n\right)  {\rm e}^{- t P_n T(\phi_-) P_n} \right)
\\
=\det {\rm e}^{- t P_n T(\phi_+) P_n}\det \left(I+ P_n({\rm e}^{ t T(\phi)}-I)P_n\right)  \det {\rm e}^{- t P_n T(\phi_-) P_n}\\
=\det \left(I+ P_n({\rm e}^{ t T(\phi)}-I)P_n\right)  {\rm e}^{- \Tr t P_n T(\phi)}.
\end{multline*}
By inserting the latter into \eqref{eq:Toepltiztickdethulp} we obtain \eqref{eq:Toeplitztickdet}, proving the statement.
 \end{proof}

To see how \eqref{eq:Toeplitztickdet} brings us closer to proving a Central Limit Theorem, replace $P_n$ by $I$ on its right-hand side. We then obtain the determinant of the product of three exponentials, which can be computed using the following principle: Suppose $A,B$ are two bounded operators  such that $[A,B]$ is trace class. Then 
$${\rm e}^{-A} {\rm e}^{A+B} {\rm e}^{-B}-I \text{ is trace class}, $$ and 
\begin{equation}\label{eq:erhardt}
\det {\rm e}^{-A} {\rm e}^{A+B} {\rm e}^{-B} = {\rm e}^{-\frac{1}{2} \Tr [A,B]}.
\end{equation}
This beautiful identity was first proved by Ehrhardt \cite{E} and is a generalization of the Helton-Howe-Pincus formula. By considering $A=tT\left (\phi_+ \right)$, $B=t T\left(\phi_- \right)$, and under the condition that $[T(\phi_+),T(\phi_-)]$ is trace class, the right-hand side of \eqref{eq:erhardt} is immediately seen to be the generating function for a Gaussian random variable. For fixed $\phi$, as in \cite{BD-CLT}, the proof of a Central Limit Theorem in the case of Toeplitz matrices follows from this almost immediately. In our case, however, $\phi$ depends on $n$. This requires a somewhat more subtle reasoning, and in particular, control of the trace norm of the remainder term. The following formula  for ${\rm e}^{-A} {\rm e}^{A+B} {\rm e}^{-B}-I $ in terms of the commutator $[A,B]$ will allow us to do this, also showing along the way that ${\rm e}^{-A} {\rm e}^{A+B} {\rm e}^{-B}-I $ is indeed trace class.

\begin{lemma}
For any bounded operators $A,B$ we have
\begin{multline}\label{eq:expansionAB}
{\rm e}^{-A} {\rm e}^{A+B} {\rm e}^{-B}-I\\
=\sum_{m_1,m_2,m_3=0}^\infty \sum_{j=0}^{m_2-1} \frac{(-1)^{m_1+m_3} A^{m_1} (A+B)^j [A,B] (A+B)^{m_2-j-1} B^{m_3}}{m_1! m_2! m_3! (m_1+m_2+m_3+1)} .
\end{multline}
Moroever, if $[A,B]$ is trace class then  ${\rm e}^{-A} {\rm e}^{A+B} {\rm e}^{-B}-I$ is trace class.
\end{lemma}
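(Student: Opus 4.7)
The plan is to establish the identity by an interpolation plus expansion argument. I would introduce the one-parameter family
\[ f(t) = e^{-tA}\, e^{t(A+B)}\, e^{-tB}, \quad 0 \le t \le 1, \]
so that $f(1) - f(0)$ is exactly $e^{-A} e^{A+B} e^{-B} - I$. Differentiating via the product rule and using $A e^{-tA} = e^{-tA}A$ together with $e^{-tB}B = B e^{-tB}$, the contributions from the outer exponentials partly cancel against the $(A+B)$ pulled out of the middle factor, leaving the clean identity
\[ f'(t) = e^{-tA}\, [B,\, e^{t(A+B)}]\, e^{-tB}. \]
The inner commutator is then expanded by the Duhamel-type telescoping identity $[B,(A+B)^k] = \sum_{j=0}^{k-1}(A+B)^j\,[B,A]\,(A+B)^{k-1-j}$ applied termwise to the Taylor series of $e^{t(A+B)}$. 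Substituting the power series for $e^{-tA}$ and $e^{-tB}$ and computing $\int_0^1 t^{k+m_1+m_3}\,dt = 1/(k+m_1+m_3+1)$, then renaming $k = m_2$, recovers the triple series on the right-hand side of the claimed identity.

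The interchange of integration with the infinite sums is justified by absolute convergence in operator norm: the crude bounds $\|A^{m_1}\|_\infty \le \|A\|_\infty^{m_1}$, $\|(A+B)^{m_2}\|_\infty \le (\|A\|_\infty + \|B\|_\infty)^{m_2}$, and $\|B^{m_3}\|_\infty \le \|B\|_\infty^{m_3}$ are controlled by the triple factorial in the denominator. For the trace-class assertion, every summand carries $[A,B]$ as an interior factor, so the ideal property $\|XYZ\|_1 \le \|X\|_\infty\,\|Y\|_1\,\|Z\|_\infty$ bounds the sum of trace norms of all terms with a given $(m_1,m_2,m_3)$ by
\[ \frac{m_2\,\|A\|_\infty^{m_1}(\|A\|_\infty+\|B\|_\infty)^{m_2-1}\|B\|_\infty^{m_3}}{m_1!\, m_2!\, m_3!\,(m_1+m_2+m_3+1)}\,\|[A,B]\|_1, \]
and summing over $m_1,m_2,m_3$ produces a convergent triple series bounded by a constant multiple of $\|[A,B]\|_1$. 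This shows simultaneously that $e^{-A}e^{A+B}e^{-B}-I$ is trace class and that the series representation converges absolutely in trace norm.

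The main obstacle is combinatorial bookkeeping: one must verify that the signs $(-1)^{m_1+m_3}$, the index range $0\le j\le m_2-1$, and the denominator $m_1+m_2+m_3+1$ all emerge with the correct constants from the $t$-integration and the interleaved Taylor expansions. The underlying operator theory — commutation of $A$ with its own exponential, a Duhamel-style commutator expansion for $e^{t(A+B)}$, and the trace-ideal estimate $\|XYZ\|_1 \le \|X\|_\infty\|Y\|_1\|Z\|_\infty$ — is standard and carries through without difficulty.
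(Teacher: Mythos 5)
Your proposal is essentially the paper's own proof: interpolate with $f(t)=e^{-tA}e^{t(A+B)}e^{-tB}$, differentiate to obtain $f'(t)=e^{-tA}[B,e^{t(A+B)}]e^{-tB}$, expand the inner commutator via the telescoping identity applied term by term to the Taylor series, integrate from $0$ to $1$, and close the trace-class assertion with $\|XYZ\|_1\le\|X\|_\infty\|Y\|_1\|Z\|_\infty$. That is exactly what the paper does.

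There is, however, one point you gloss over. Your telescoping step produces $[B,A]=-[A,B]$ in the interior of each summand, so the series you actually derive carries the opposite sign from \eqref{eq:expansionAB} as stated, which has $[A,B]$. This is not a quibble: a brute-force Taylor expansion to second order gives
\[
e^{-A}\,e^{A+B}\,e^{-B}-I=-\tfrac12[A,B]+O(3),
\]
whereas the right-hand side of \eqref{eq:expansionAB} evaluates at $(m_1,m_2,m_3)=(0,1,0)$ to $+\tfrac12[A,B]$. So the lemma as printed is off by an overall sign, and the paper's own proof commits the same slip by silently replacing $[B,(A+B)^{m_2}]$ with $[A,(A+B)^{m_2}]$ (these differ by a sign) when expanding the derivative. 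Your formula with $[B,A]$ is the correct one; the claim that your computation ``recovers the triple series on the right-hand side of the claimed identity'' does not hold and you should either keep $[B,A]$ or flip the prefactor to $(-1)^{m_1+m_3+1}$. The discrepancy is harmless for everything downstream — the trace-class conclusion and the subsequent determinant estimates only use the structure of the series, not the sign — but it should be flagged rather than passed over.
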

\begin{proof}
A direct computation shows that 
$$
\frac{{\rm d}}{{\rm d}t} {\rm e}^{- t A} {\rm e}^{t(A+B)} {\rm e}^{-tB}= {\rm e}^{-t A} [B,{\rm e}^{t(A+B)}] {\rm e}^{-t B},
$$
and by expanding the right-hand side we find 
$$\frac{{\rm d}}{{\rm d}t} {\rm e}^{- t A} {\rm e}^{t(A+B)} {\rm e}^{-tB}= \sum_{m_1,m_2,m_3=0}^\infty  \frac{t^{m_1+m_2+m_3}(-1)^{m_1+m_3}}{m_1! m_2! m_3! } A^{m_1}  [A,(A+B)^{m_2}]  B^{m_3}.$$
By inserting the telescoping sum $[c,d^m]= \sum_{j=0}^{m-1} d^j[c,d] d^{m-1-j}$ and using $[A,A+B]= [A,B]$ we find 
\begin{multline*}\frac{{\rm d}}{{\rm d}t} {\rm e}^{- t A} {\rm e}^{t(A+B)} {\rm e}^{-tB}\\=\sum_{m_1,m_2,m_3=0}^\infty \sum_{j=0}^{m_2-1} \frac{t^{m_1+m_2+m_3}(-1)^{m_1+m_3}A^{m_1} (A+B)^j [A,B] (A+B)^{m_2-j-1} B^{m_3}}{m_1! m_2! m_3!} .
\end{multline*}
By integrating the left- and right-hand side from $t=0$ to $t=1$ we obtain the statement.
\end{proof}

We want to use the above results, \eqref{eq:erhardt} in particular, with $A=tT(\phi_{+})$ and $B=tT(\phi_{-})$.  By a direct computation, or by using the general formula for Toeplitz matrices \begin{equation} \label{eq:productToeplitz}
T(ab)= T(a) T(b) + H(a) H\left(\widetilde b \right)\end{equation}
 where $\widetilde b(z)=b(1/z)$, one easily computes that 
\begin{equation}\label{eq:commutatortoeplitz}
 [T(\phi_{+}),T(\phi_{-}) ]= -H(\phi) H \left(\widetilde \phi \right).
 \end{equation}
Moreover, if $ \sum_{k \in \bbZ}  |k|  \phi_k^2<\infty$ then the commutator is trace class since
\begin{multline} \label{eq:tracenormcommutator}
\|[T(\phi_{+}),T(\phi_{-}) ]\|_1= \left\|H(\phi) H\left(\widetilde \phi \right)\right\|_1\leq \left\|H(\phi)\|_2^{1/2} H\left(\widetilde \phi \right) \right\|_2^{1/2}\\= \left( \sum_{k=1}^\infty k  |\phi_k|^2\right)^{1/2}\left( \sum_{k=1}^\infty k  |\phi_{-k}|^2\right)^{1/2}<\infty.
\end{multline}
\begin{lemma}
If  $ \sum_{k \in \bbZ} |k|  \phi_k^2<\infty$ we have that 
 \begin{multline}\label{eq:borodinokounkov}
 \det\left(I+ P_n({\rm e}^{ t T(\phi)}-I)P_n\right)  {\rm e}^{-\mathrm{Tr} P_ntT(\phi)}\\={\rm e}^{\frac{t^2}{2}\Tr H(\phi)H(\tilde \phi)}\det(I+Q_n (R(t,\phi)^{-1}-I)),
 \end{multline}
 where $$R(t,\phi)={\rm e}^{-t T(\phi_{+} ) }{\rm e}^{ t T(\phi)}{\rm e}^{-t T(\phi_{-} ) }.$$
 \end{lemma}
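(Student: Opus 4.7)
My plan is to combine three ingredients: the rewriting in the previous lemma \eqref{eq:Toeplitztickdet}, Ehrhardt's identity \eqref{eq:erhardt}, and an algebraic block-Schur identity in the spirit of Borodin-Okounkov.

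First, I would apply the preceding lemma to rewrite
\[
\det\bigl(I+P_n(e^{tT(\phi)}-I)P_n\bigr)\,e^{-\Tr P_n tT(\phi)} = \det\bigl(I+P_n(R(t,\phi)-I)P_n\bigr).
\]
Next, I would invoke Ehrhardt's identity \eqref{eq:erhardt} with $A = tT(\phi_+)$ and $B = tT(\phi_-)$, so that $A+B = tT(\phi)$ and $R(t,\phi) = e^{-A}e^{A+B}e^{-B}$. Under the hypothesis $\sum_k |k|\phi_k^2<\infty$, the commutator formula \eqref{eq:commutatortoeplitz} and the Hilbert-Schmidt estimate \eqref{eq:tracenormcommutator} show that $[A,B]$ is trace class. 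Ehrhardt's formula therefore applies and yields both that $R(t,\phi)-I$ is trace class (so the Fredholm determinant on the right of \eqref{eq:borodinokounkov} is well defined) and
\[
\det R(t,\phi) = e^{-\tfrac{t^2}{2}\Tr [T(\phi_+),T(\phi_-)]} = e^{\tfrac{t^2}{2}\Tr H(\phi)H(\widetilde\phi)},
\]
which produces the explicit Gaussian-type factor in \eqref{eq:borodinokounkov}.

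It then remains to establish the purely algebraic identity
\[
\det\bigl(I+P_n(R-I)P_n\bigr) = \det R \cdot \det\bigl(I+Q_n(R^{-1}-I)\bigr),\qquad Q_n := I-P_n,
\]
valid for any invertible $R$ with $R-I$ trace class. Writing
\[
R=\begin{pmatrix} A & B \\ C & D \end{pmatrix},\qquad R^{-1}=\begin{pmatrix} \widetilde A & \widetilde B \\ \widetilde C & \widetilde D \end{pmatrix}
\]
in the block decomposition induced by $P_n$ and $Q_n$, I observe that $I+P_n(R-I)P_n = A\oplus I_{Q_n}$ is block-diagonal with Fredholm determinant $\det A$, while $I+Q_n(R^{-1}-I) = P_n + Q_nR^{-1}$ is block lower-triangular, with diagonal blocks $I_{P_n}$ and $\widetilde D$, hence of determinant $\det\widetilde D$. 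The standard Schur complement identities, which are genuinely finite-dimensional on the $P_n$ block since $P_n$ has finite rank, give $\det R = \det A \cdot \det(D-CA^{-1}B)$ and $\widetilde D = (D-CA^{-1}B)^{-1}$, so that $\det R\cdot \det\widetilde D = \det A$, as required.

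The main technical points to watch are (i) the invertibility of $A = P_nRP_n$ on the range of $P_n$, which holds for $|t|$ small by continuity since $A\to P_n$ as $t\to 0$, with the identity then extended to all admissible $t$ by analyticity of both sides in $t$; and (ii) verifying that $R^{-1}-I$ is trace class, which follows from the identity $R^{-1}-I = -R^{-1}(R-I)$ and the fact that $R$ is bounded and invertible. The hard part is really the bookkeeping of these trace-class and invertibility conditions; the algebra itself, once set up in block form and combined with Ehrhardt's formula, delivers \eqref{eq:borodinokounkov} at once.
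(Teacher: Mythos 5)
Your argument is correct, and the way you invoke Ehrhardt's identity \eqref{eq:erhardt} with $A=tT(\phi_+)$, $B=tT(\phi_-)$ together with the commutator formula \eqref{eq:commutatortoeplitz} and the Hilbert--Schmidt estimate \eqref{eq:tracenormcommutator} to get $\det R(t,\phi)={\rm e}^{\frac{t^2}{2}\Tr H(\phi)H(\widetilde\phi)}$ matches the paper exactly. Where you diverge is in the purely algebraic step: you route through a $2\times 2$ block decomposition and Schur complements (the classical Borodin--Okounkov/Basor--Widom picture), whereas the paper uses a one-line factorization. Namely, starting from $\det\bigl(I+P_n(R-I)P_n\bigr)$, one observes this equals $\det\bigl(I+P_n(R-I)\bigr)=\det\bigl(P_nR+Q_n\bigr)$, and then simply factors
\[
P_nR+Q_n=\bigl(P_n+Q_nR^{-1}\bigr)R=\bigl(I+Q_n(R^{-1}-I)\bigr)\,R,
\]
so multiplicativity of the Fredholm determinant (legitimate since $R-I$ and $Q_n(R^{-1}-I)$ are trace class) gives the identity in a single stroke. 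Your Schur-complement route requires invertibility of $A=P_nRP_n$ on $\ran P_n$, which you patch with a small-$|t|$-plus-analyticity argument; the paper's factorization needs only that $R$ itself is invertible (automatic, as a product of exponentials) and so sidesteps that extra bookkeeping. Both approaches are correct, but the paper's is the more economical of the two.
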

 \begin{proof}
 From \eqref{eq:Toeplitztickdet}
 \begin{multline}
 \det\left(I+ P_n({\rm e}^{ t T(\phi)}-I)P_n\right)  {\rm e}^{-\mathrm{Tr} P_ntT(\phi)}=
\det(I+ P_n (R(t,\phi)-I) )\\= \det(R(t)+Q_n (I-R(t,\phi)))= \det R(t,\phi)\det(I+Q_n (R_n(t,\phi)^{-1}-I)).
\end{multline}
Now  use \eqref{eq:erhardt} and \eqref{eq:commutatortoeplitz} to deduce
$$
\det R(t,\phi)= {\rm e}^{-\frac{t^2}{2} \Tr  [T(\phi_{+}),T(\phi_{-} )]}={\rm e}^{\frac{t^2}{2}\Tr  H(\phi)H\left(\widetilde \phi \right)}.
$$
This proves the statement.
 \end{proof}
\begin{remark}
The identity \eqref{eq:borodinokounkov} is similar to the celebrated identity for Toeplitz determinants as found by Case and Geronimo \cite{cg}, and rediscoved by Borodin and Okounkov \cite{bo}. In fact, the second proof of the latter identity in \cite{BW} by Basor and Widom  was an inspiration for the proof that we present here (see also \cite{BD-CLT}). 
\end{remark}

\subsection{Asymptotic behavior of the Toeplitz determinant}
We return now to the determinant on the right-hand side of \eqref{eq:cumultoepl} and consider $\phi=\phi^{(n)}$ as defined in \eqref{eq:defphifree}. To apply the results of the previous subsection, and in particular \eqref{eq:borodinokounkov}, we need to check that $\sum_{k \in \bbZ} |k| \left|\phi_k^{(n)} \right|^2<\infty$. To this end, we note that it follows from \eqref{eq:freeCT} that for $n\in \bbN$ we have 
\begin{equation}\label{eq:unifromnormsobolevphi}
 \sum_{k \in \bbZ} |k| \left|\phi^{(n)}_k \right|^2 \leq C
 \end{equation}
for some constant $C>0$ that is independent of $n$.  

We divide the analysis of the right-hand side of \eqref{eq:borodinokounkov} into two parts. We will compute the limiting behavior of the trace in the exponential, and show that the determinant converges to $1$. We start with the latter.

We recall that Fredholm determinants are continuous with respect to the trace norm, which means that to prove that $\det(I+A_n) \to 1$ for some sequence of operators $A_n$ it suffices to prove that $\|A_n\|_1 \to 0$. 
\begin{lemma}\label{lem:smallnormproof}
Fix $x_0 \in (-2,2)$ and $0<\alpha<1$. Let $N \in \bbN$, $\eta_1,\ldots, \eta_N \in \bbH_+$, $c_1, \ldots,c_N \in \bbR$  and set $\lambda_j= x_0+\eta_j/n^\alpha$. Then with $\phi^{(n)}$ as in \eqref{eq:defphifree} we have 
$$\left\|Q_n \left(R\left( t,\phi^{(n)} \right)^{-1}-I \right) \right\|_1\to 0,$$
as $n \to \infty$, uniformly for $\eta_j$ in compact subsets of $\bbH_+$ and $t$ in sufficiently small neighborhoods of the origin. 
\end{lemma}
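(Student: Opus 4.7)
The plan is to apply the commutator expansion \eqref{eq:expansionAB} to $R(t,\phi^{(n)})^{-1}$. Writing $R(t,\phi^{(n)})^{-1}=e^{-A}e^{A+B}e^{-B}$ with $A=-t\,T(\phi^{(n)}_{-})$ and $B=-t\,T(\phi^{(n)}_{+})$, we have $A+B=-t\,T(\phi^{(n)})$ and, by \eqref{eq:commutatortoeplitz},
\begin{equation*}
[A,B]=t^{2}[T(\phi^{(n)}_{-}),T(\phi^{(n)}_{+})]=t^{2}H(\phi^{(n)})H\bigl(\widetilde{\phi^{(n)}}\bigr).
\end{equation*}
The expansion then expresses $R(t,\phi^{(n)})^{-1}-I$ as an absolutely convergent series whose general term has the form $\kappa_{\vec m,j}\cdot L_{\vec m,j}\cdot H(\phi^{(n)})H\bigl(\widetilde{\phi^{(n)}}\bigr)\cdot R_{\vec m,j}$, where the $\kappa_{\vec m,j}$ carry the factorial denominators $\tfrac{1}{m_1!m_2!m_3!(m_1+m_2+m_3+1)}$ and both $L_{\vec m,j}$ and $R_{\vec m,j}$ are finite products of the uniformly bounded Toeplitz operators $T(\phi^{(n)}_{\pm})$ and $T(\phi^{(n)})$.

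The key estimate, which drives the whole argument, is the following scale-separation bound. By \eqref{eq:freeCT}, the coefficients of each of $\phi^{(n)}_{\pm}$ and $\phi^{(n)}$ are bounded by $\frac{d_{1}C_{N}}{n^{\alpha}}e^{-d_{2}|k|/n^{\alpha}}$, so each Toeplitz factor has uniformly bounded $\ell^{1}$-row sum, hence uniformly bounded operator norm. For a product $L=T(f_{1})\cdots T(f_{p})$ of such factors, writing out
\begin{equation*}
\left(L\,H(\phi^{(n)})\right)_{r,s}=\sum_{m_{1},\ldots,m_{p}\ge 1}f_{1}(r-m_{1})f_{2}(m_{1}-m_{2})\cdots f_{p}(m_{p-1}-m_{p})\,\phi^{(n)}(m_{p}+s-1)
\end{equation*}
and splitting the exponential bound $e^{-d_{2}(|r-m_{1}|+\cdots+|m_{p-1}-m_{p}|+m_{p}+s-1)/n^{\alpha}}$ into a ``decay'' part $e^{-d_{2}(1-\theta)(r+s-1)/n^{\alpha}}$ (by the triangle inequality $|r-m_{1}|+\cdots+|m_{p-1}-m_{p}|+m_{p}\ge r$) and an ``entropy'' part summed against factors of $\ell^{1}$-norm $O(1)$, one obtains
\begin{equation*}
\left|\left(L\,H(\phi^{(n)})\right)_{r,s}\right|\le K^{p}\,e^{-c(r+s)/n^{\alpha}}
\end{equation*}
for constants $K,c>0$ independent of $n$ and $p$. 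Since $0<\alpha<1$, squaring and summing over $r>n$, $s\ge 1$ gives
\begin{equation*}
\left\|Q_{n}L\,H(\phi^{(n)})\right\|_{2}\le K^{p}\,n^{\alpha}\,e^{-c\,n^{1-\alpha}},
\end{equation*}
exponentially small in the scale-separation parameter $n^{1-\alpha}$.

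To assemble the proof, one estimates each term in the expansion by the Hilbert--Schmidt factorization
\begin{equation*}
\left\|Q_{n}L_{\vec m,j}H(\phi^{(n)})H\bigl(\widetilde{\phi^{(n)}}\bigr)R_{\vec m,j}\right\|_{1}\le\left\|Q_{n}L_{\vec m,j}H(\phi^{(n)})\right\|_{2}\cdot\bigl\|H\bigl(\widetilde{\phi^{(n)}}\bigr)\bigr\|_{2}\cdot\left\|R_{\vec m,j}\right\|_{\infty},
\end{equation*}
where the middle factor is uniformly bounded in $n$ by \eqref{eq:unifromnormsobolevphi} and the third factor is at most a constant to the $p$-th power. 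Summing, the factorial denominators from \eqref{eq:expansionAB} dominate the $K^{p}$ growth once $|t|$ is small enough, and the exponential factor $e^{-c n^{1-\alpha}}$ can be pulled out of the sum, yielding
\begin{equation*}
\left\|Q_{n}\bigl(R(t,\phi^{(n)})^{-1}-I\bigr)\right\|_{1}\le C\,e^{-c\,n^{1-\alpha}}\longrightarrow 0,
\end{equation*}
uniformly for $\eta_{j}$ in a compact subset of $\{\eta\in\bbC:\Im\eta>0\}$ (via uniform control of $d_{1},d_{2}$ in \eqref{eq:freeCT}) and for $t$ in a small neighborhood of $0$. The main technical obstacle is the bookkeeping in the second step: one must genuinely extract joint exponential decay in $r+s$ (not merely in $|r-s|$) from the iterated convolution, while keeping the $p$-dependence of the constants tame enough to be absorbed by the factorial denominators of the commutator expansion.
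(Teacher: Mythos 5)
Your proof is correct, but it takes a genuinely different route to the key estimate than the paper. Both arguments start from the commutator expansion \eqref{eq:expansionAB} (the swap of which Toeplitz part is $A$ and which is $B$ is immaterial) and both aim to show that $Q_n$ effectively annihilates the operator $L\,H(\phi^{(n)})$, where $L$ is a product of Toeplitz factors. The paper does this by iteratively commuting $Q_n$ past one Toeplitz factor at a time: it fixes an auxiliary scale $\alpha<\beta<1$, writes $Q_M T(\phi^{(n)}) = Q_M T(\phi^{(n)}) Q_{M-n^\beta} + Q_M T(\phi^{(n)}) P_{M-n^\beta}$, observes that the second piece is $O(e^{-Dn^{\beta-\alpha}})$, and after $p$ iterations arrives at $Q_{n-p n^\beta} H(\phi^{(n)})$ which is exponentially small in Hilbert--Schmidt norm. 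Crucially, the paper then disposes of the infinite sum over $(m_1,m_2,m_3,j)$ by dominated convergence, so it never needs to track how its constants grow with $p$. You instead derive a single direct entrywise bound $|(L H(\phi^{(n)}))_{r,s}| \le K^p e^{-c(r+s)/n^\alpha}$ by extracting a $(1-\theta)$ fraction of the exponential decay from the iterated convolution (using $|r-m_1|+\cdots+|m_{p-1}-m_p|+m_p \ge r$) and summing the remaining $\theta$ fraction; the $n^\alpha$ factors from the $p$ convolution sums cancel the $n^{-\alpha}$ prefactors in \eqref{eq:freeCT}, so $K$ is uniform in $n$ and $p$. This avoids the intermediate scale $\beta$ altogether and yields a quantitative rate $O(e^{-cn^{1-\alpha}})$ for the trace norm, at the price of the more delicate bookkeeping you correctly flag as the main obstacle, and the need to check explicitly that the factorial denominators in \eqref{eq:expansionAB} absorb the $K^p$ growth (they in fact do so for every $t$, since $\sum \frac{K^m}{m_1!m_2!m_3!} < \infty$, so the smallness of $|t|$ is not even needed at this step). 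Both proofs are sound; yours is more explicit, the paper's has lighter bookkeeping.
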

\begin{proof}
 Note that for fixed $\phi$ independent of $n$, the lemma would be a triviality, since $R\left(t,\phi \right)^{-1}-I$ is trace class and $Q_n\to 0$ weakly. However, since $\phi^{(n)}$ depends on $n$  this is not a priori obvious and thus requires a proof.

We use the expansion \eqref{eq:expansionAB} with $A=-tT\left(\phi_+^{(n)} \right)$ and $B= -tT\left(\phi_-^{(n)}\right)$.
\begin{multline}
Q_n   \left(R( t,\phi^{(n)})^{-1}-I\right)=Q_n\left({\rm e}^{-t T\left(\phi_+^{(n)}\right ) }{\rm e}^{t T\left(\phi^{(n)}\right))}{\rm e}^{-t T\left(\phi^{(n)}_{-}\right)}-I\right)\\=
\sum_{m_1,m_2,m_3=0}^\infty \sum_{j=0}^{m_2-1} \frac{(-1)^{m_1+m_3} t^{m_1+m_2+m_3+1}  }{m_1! m_2! m_3! (m_1+m_2+m_3+1)} \\
\times Q_n T\left(\phi_+^{(n)}\right)^{m_1}T\left(\phi^{(n)}\right)^j \left[T\left(\phi_+^{(n)}\right),T\left(\phi_-^{(n)}\right)\right] T\left(\phi^{(n)}\right)^{m_2-j-1} T\left(\phi_-^{(n)}\right)^{m_3}.
\end{multline} By \eqref{eq:freeCT} we see that for each compact set $K\subset \bbH_+$ there exists a constant $C>0$ such that 
$$\left\|T\left(\phi_{\pm}^{(n)}\right) \right\|_\infty  =\left \| \phi_{\pm}^{(n)}\right\|_\infty \leq \sum_{ j= 0 }^{\pm \infty}\left | \phi_{j}^{(n)}\right| \leq C, \qquad \left \|T\left(\phi^{(n)}\right) \right\|_\infty   \leq C.$$
By \eqref{eq:tracenormcommutator} and \eqref{eq:unifromnormsobolevphi}, we find
$$\left\|\left[T\left(\phi_+^{(n)}\right),T\left(\phi_-^{(n)}\right) \right]\right\|_1\leq \left\|H\left(\phi^{(n)} \right) \right\|_2\left\|H\left(\widetilde \phi^{(n)}\right)\right\|_2 \leq C$$
for $n \in \bbN$ and $\eta_j \in K$. Hence, by using together with $\left\|AB \right\|_1\leq \left\|A \right\|_1 \left\|B \right\|_\infty,\left\|AB \right\|_1\leq \left\|A \right\|_\infty \left\|B \right\|_1$  and $\left\|Q_n \right\|_\infty=1$,we find
\begin{multline*}\left\|Q_n T\left(\phi_+^{(n)} \right)^{m_1}T\left(\phi^{(n)} \right)^j\left [T\left(\phi_+^{(n)} \right),T\left(\phi_-^{(n)}\right) \right] T\left(\phi^{(n)}\right)^{m_2-j-1} T\left(\phi_-^{(n)}\right)^{m_3} \right \|_1\\ \leq
\left\|T\left(\phi_-^{(n)} \right) \right\|_\infty^{m_1}\left \|T(\phi^{(n)})] \right\|^{m_2-1}_\infty \left\|\left[T\left(\phi_+^{(n)}\right) ,T\left(\phi_{-}^{(n)}\right)\right] \right\|_1 \left\|T\left(\phi_-^{(n)}\right) \right\|_\infty^{m_3}
\\ \leq  C^{m_1+m_2+m_3},\end{multline*}
for $n \in \bbN$ and $\eta_j \in K$.  Therefore we see that, by dominated convergence, it is sufficient to prove that 
\begin{equation}\label{eq:estnormQn}
\left\|Q_n T\left(\phi_+^{(n)}\right)^{m_1}T\left(\phi^{(n)}\right)^j \left [T\left(\phi_+^{(n)}\right),T\left(\phi_-^{(n)}\right)\right] T\left(\phi^{(n)}\right)^{m_2-j-1} T\left(\phi_-^{(n)}\right)^{m_3}\right\|_1\to 0,
\end{equation}
 for each $m_1,m_2,m_3 \in \bbN$ and $j \in \{0,1,\ldots,m_2-1\}$. 

To prove \eqref{eq:estnormQn} we exploit the fact that the Toeplitz matrices are essentially banded, in the sense that they satisfy Hypothesis \ref{Hypothesis1}. Hence, if we take $1>\beta >\alpha$ and  $M \geq n^\beta$, and then split
 $$Q_M T\left(\phi^{(n)}\right) = Q_M T\left(\phi^{(n)}\right) Q_{M-n^\beta} +Q_M T\left(\phi^{(n)}\right) P_{M-n^\beta} ,$$
then the norm of $Q_M T\left(\phi^{(n)}\right) P_{M-n^\beta}$ can be estimated to be 
$$\left\|Q_M T\left(\phi^{(n)}\right) P_{M-n^\beta}\right\|_\infty  \leq \sum_{j=n^\beta}^\infty\left | \left(\phi^{(n)} \right)_j \right| \leq \exp \left(-D n^{\beta-\alpha}\right),$$
for some constant $D>0$ independent of $M$. Clearly, since $Q_M T\left(\phi^{(n)}\right) P_{M-n^\beta}= Q_M T\left(\phi_+^{(n)}\right) P_{M-n^\beta}$ the same estimate holds for $Q_M T\left(\phi_+^{(n)}\right) P_{M-n^\beta}$. Hence, by combining this esimate with the straightforward estimate from above, we obtain
\begin{multline*}\left\|Q_n T\left(\phi_+^{(n)} \right)^{m_1}T\left(\phi^{(n)}\right)^j \left[T\left(\phi_+^{(n)}\right),T\left(\phi_-^{(n)}\right)\right] T\left(\phi^{(n)}\right)^{m_2-j-1} T\left(\phi_-^{(n)}\right)^{m_3}\right\|_1\\
 \leq    C \left\|Q_{n-n^\beta} T\left(\phi_+^{(n)}\right)^{m_1-1}T\left(\phi^{(n)}\right)^j\left [T\left(\phi_+^{(n)}\right),T\left(\phi_-^{(n)}\right)\right] T\left(\phi^{(n)}\right)^{m_2-j-1} T\left(\phi_-^{(n)}\right)^{m_3}\right\|_1\\+\mathcal O\left(\exp \left(-D n^{\beta-\alpha}\right)\right),\end{multline*}
as $n \to \infty$.  By iterating this procedure we obtain 
\begin{multline*} \left\|Q_n T\left(\phi_+^{(n)}\right)^{m_1}T\left(\phi^{(n)}\right)^j\left [T\left(\phi_+^{(n)}\right),T\left(\phi_-^{(n)}\right)\right] T\left(\phi^{(n)}\right)^{m_2-j-1} T\left(\phi_-^{(n)}\right)^{m_3}\right\|_1
\\
\leq  C^{m_1+j} \left\|Q_{n-(m_1+j)n^{\beta}}\left [T\left(\phi_+^{(n)}\right),T\left(\phi_-^{(n)}\right)\right] T\left(\phi^{(n)}\right)^{m_2-j-1} T\left(\phi_-^{(n)}\right)^{m_3}\right\|_1\\+ \mathcal O(\exp \left(-D n^{\beta-\alpha}\right)),\end{multline*}
as $n \to \infty$.
Hence, to prove \eqref{eq:estnormQn} it is sufficient to prove
$$\left \|Q_{n-(m_1+j)n^{\beta}}\left [T\left(\phi_+^{(n)}\right),T\left(\phi_-^{(n)}\right)\right] T\left(\phi^{(n)}\right)^{m_2-j-1} T\left(\phi_-^{(n)}\right)^{m_3}\right\|_1\to 0.$$
By estimating the trace norm as before we are left with proving 
\beq \label{eq:Qnm1j}\left \|Q_{n-(m_1+j)n^{\beta}}\left [T\left(\phi_+^{(n)}\right),T\left(\phi_-^{(n)}\right)\right]\right\|_1 \to 0,
\end{equation}
as $n \to \infty$. But also the latter is exponentially small as $n \to \infty$. Indeed, by  $\left[T\left(\phi_+^{(n)}\right),T\left(\phi_-^{(n)}\right)\right]=-H\left(\phi^{(n)}\right)H\left(\widetilde \phi^{(n)}\right)$ we have
$$\left\|Q_{n-(m_1+j)n^{\beta}}\left [T\left(\phi_+^{(n)}\right),T\left(\phi_-^{(n)}\right)\right]\right\|_1 \leq \left \|Q_{n-(m_1+j)n^{\beta}} H\left(\phi^{(n)}\right)\right\|_2\left\|H\left(\widetilde \phi^{(n)}\right) \right\|_2.$$
Now, $ \left\|H\left(\widetilde \phi^{(n)}\right) \right\|_2$ is again uniformly bounded and 
$$\left\|Q_{n-(m_1+j)n^{\beta}} H(\phi^{(n)}) \right\|_2^2= \sum_{\ell=1}^\infty \ell \left |\phi^{(n)}_{n-(m_1+j)n^\beta+ \ell} \right|^2,$$
which, by \eqref{eq:freeCT}  is exponentially small as $n \to \infty$.  We therefore obtain \eqref{eq:Qnm1j} and the statement follows.
\end{proof}

It remains to compute the limiting behavior of the trace of the commutator in \eqref{eq:borodinokounkov}, which we do in the next lemma.
\begin{lemma}\label{lem:variancefree}
Fix $x_0 \in (-2,2)$ and $0<\alpha<1$. Let $N \in \bbN$, $\eta_1,\ldots, \eta_N \in \bbH_+$, $c_1, \ldots,c_N \in \bbR$  and set $\lambda_j= x_0+\eta_j/n^\alpha$. Then with $\phi^{(n)}$ as in \eqref{eq:defphifree} we have 
\begin{equation}
\label{eq:limvariancefree}\lim_{n\to \infty} \Tr  H\left(\phi^{(n)}\right)H\left(\widetilde \phi^{(n)} \right)= -\frac{1}{2}\sum_{1\leq i,j\leq N} c_ic_j \Re \frac{1 }{(\eta_i-\bar \eta_j)^2}.
\end{equation}
Moreover, the convergence is uniform for $c_j$ in compact subsets of $\bbR$ and $\eta_j$ in compact subsets of the upper half plane. 
\end{lemma}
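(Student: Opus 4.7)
The plan is to turn the trace into an explicit geometric-type series and then identify which terms survive after dividing by $n^{2\alpha}$, using the expansion of $\omega(\lambda_j)$ around $\omega(x_0)$ given by \eqref{eq:asymptoticsomega}.

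First I would use the fact that $(H(\phi)H(\widetilde\phi))_{j,k}=\sum_{\ell}\phi_{j+\ell-1}\phi_{-(k+\ell-1)}$, from which a direct count gives
\begin{equation*}
\Tr H(\phi^{(n)}) H\bigl(\widetilde{\phi^{(n)}}\bigr)=\sum_{m=1}^{\infty} m\,\phi^{(n)}_m\,\phi^{(n)}_{-m}.
\end{equation*}
Since the coefficients $(a_\lambda)_m$ depend only on $|m|$, we have $\phi^{(n)}_m=\phi^{(n)}_{-m}$, so the sum equals $\sum_{m\ge 1}m(\phi^{(n)}_m)^2$. Substituting $\phi^{(n)}=\frac{1}{n^\alpha}\sum_i c_i a_{\lambda_i}$ and using the identity $\Im z_1\,\Im z_2=\tfrac{1}{2}\Re(z_1\bar z_2)-\tfrac{1}{2}\Re(z_1z_2)$ applied to $z_k=\omega(\lambda_k)^m/(\omega(\lambda_k)-\omega(\lambda_k)^{-1})$, together with $\overline{\omega(\lambda)}=\omega(\bar\lambda)$ and $\sum_{m\ge 1}m r^m=r/(1-r)^2$, I obtain the closed form
\begin{align*}
\Tr H(\phi^{(n)})H\bigl(\widetilde{\phi^{(n)}}\bigr)
&=\frac{1}{2n^{2\alpha}}\sum_{i,j=1}^N c_ic_j\Re\!\left[\frac{\omega(\lambda_i)\omega(\bar\lambda_j)}{D_{ij}^+\,\bigl(1-\omega(\lambda_i)\omega(\bar\lambda_j)\bigr)^2}\right]\\
&\qquad-\frac{1}{2n^{2\alpha}}\sum_{i,j=1}^N c_ic_j\Re\!\left[\frac{\omega(\lambda_i)\omega(\lambda_j)}{D_{ij}\,\bigl(1-\omega(\lambda_i)\omega(\lambda_j)\bigr)^2}\right],
\end{align*}
where $D_{ij}^+=(\omega(\lambda_i)-\omega(\lambda_i)^{-1})(\omega(\bar\lambda_j)-\omega(\bar\lambda_j)^{-1})$ and $D_{ij}$ is defined similarly with $\bar\lambda_j$ replaced by $\lambda_j$.

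Next I would read off the asymptotics of each piece from \eqref{eq:asymptoticsomega}. Since $|\omega(x_0)|=1$ and $\omega(x_0)-\omega(x_0)^{-1}=-i\sqrt{4-x_0^2}$, one finds as $n\to\infty$ (uniformly for $\eta_j$ in compacts of $\bbH_+$):
\begin{equation*}
D_{ij}^+\to 4-x_0^2,\qquad \omega(\lambda_i)\omega(\bar\lambda_j)\to 1,\qquad 1-\omega(\lambda_i)\omega(\bar\lambda_j)=-\frac{i(\eta_i-\bar\eta_j)}{n^\alpha\sqrt{4-x_0^2}}+O(n^{-2\alpha}).
\end{equation*}
For the second (``same-side'') sum, $\omega(\lambda_i)\omega(\lambda_j)\to\omega(x_0)^2$, which lies on the unit circle but is \emph{not} equal to $1$ for $x_0\in(-2,2)$; thus that sum stays $O(1)$ and its contribution is killed by the prefactor $1/n^{2\alpha}$. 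By contrast, in the first sum $(1-\omega(\lambda_i)\omega(\bar\lambda_j))^{-2}$ grows like $n^{2\alpha}$, precisely cancelling the prefactor and producing
\begin{equation*}
\frac{1}{2n^{2\alpha}}\sum_{i,j}c_ic_j\,\Re\!\left[\frac{1}{4-x_0^2}\cdot\frac{-n^{2\alpha}(4-x_0^2)}{(\eta_i-\bar\eta_j)^2}\right]=-\frac{1}{2}\sum_{i,j}c_ic_j\,\Re\frac{1}{(\eta_i-\bar\eta_j)^2}
\end{equation*}
in the limit, which is the claimed formula. Uniform convergence on compacta follows because the expansion \eqref{eq:asymptoticsomega} is uniform there and all the functions involved depend continuously on $(c_j,\eta_j)$.

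The work is essentially algebraic, and the only subtle point to be careful about is the sign bookkeeping when converting $\Im z_1\,\Im z_2$ into real parts, and the verification that the ``same-side'' term is indeed negligible, which rests on the fact that $x_0$ lies strictly inside the bulk $(-2,2)$ so that $\omega(x_0)^2\neq 1$. If $x_0$ were equal to $\pm 2$ this mechanism would fail, which fits with the fact that this is where the free Jacobi matrix has a spectral edge and mesoscopic CLTs take a different form.
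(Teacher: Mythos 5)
Your proposal is correct and follows essentially the same route as the paper: you convert $\Tr H(\phi^{(n)})H(\widetilde\phi^{(n)})$ into the coefficient sum $\sum_{m\geq 1} m(\phi^{(n)}_m)^2$, split $\Im z_1 \Im z_2$ into a "cross" real part and a "same-side" real part, evaluate the geometric-type series, and use the expansion of $\omega(\lambda)$ from \eqref{eq:asymptoticsomega} to show the cross term produces the $n^{2\alpha}$ cancellation while the same-side term is killed by the prefactor (since $\omega(x_0)^2\neq 1$ in the bulk). The only cosmetic difference is that the paper carries out the same splitting at the level of the Hankel operators by introducing the auxiliary symbol $g_\lambda$ and writing $\frac{a_\lambda}{n^\alpha}=\frac{1}{2i}(g_\lambda-g_{\bar\lambda})$, whereas you apply the identity $\Im z_1\Im z_2=\frac12\Re(z_1\bar z_2)-\frac12\Re(z_1 z_2)$ coefficient by coefficient; the two are algebraically equivalent.
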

\begin{proof}
We use the notation $$g_{\lambda}(z)=\frac{1}{n^\alpha}\frac{1}{\omega(\lambda)-\frac{1}{\omega(\lambda)}} \sum_{j\in \bbZ} \omega(\lambda)^{|j|} z^j=\frac{z}{(z-\omega(\lambda))(z-1/\omega(\lambda))},$$
and hence $\frac{1}{n^\alpha}a_{\lambda}=\frac{1}{2i}\left( g_{\lambda}-g_{\bar \lambda}\right)$.
By a direct verification or using \eqref{eq:productToeplitz} we find 
$$ \Tr  H(g_{\lambda_1}) H (\widetilde{g}_{\lambda_2})= \sum_{j=1}^\infty k (g_{\lambda_1})_k (g_{\lambda_2})_{-k}.$$
The latter sum can be explicitly computed and we obtain 
$$ \Tr  H(g_{\lambda_1}) H (\tilde{g}_{{\lambda_2}})=\frac{1}{n^{2 \alpha}}\frac{\omega(\lambda_1)\omega(\lambda_2)}{(1-\omega(\lambda_1)\omega(\lambda_2))^2} \frac{1}{\frac{1}{\omega(\lambda_1)}-\omega(\lambda_1)} \frac{1}{\frac{1}{\omega(\lambda_2)}-\omega(\lambda_2)}. $$
Now if $\lambda=x_0+ \eta/n^\alpha$ then 
$$\omega(\lambda)= 
\begin{cases}
\frac{x_0-{\rm i} \sqrt{4-x_0^2}}{2} \left(1+\frac{{\rm i} \eta }{\sqrt{4-x_0^2}n^\alpha} + \mathcal O(n^{-2\alpha})\right),& \Im \eta>0,\\
\frac{x_0+{\rm i} \sqrt{4-x_0^2}}{2} \left(1-\frac{{\rm i} \eta}{\sqrt{4-x_0^2}n^\alpha} + \mathcal O(n^{-2\alpha})\right),& \Im \eta<0,\end{cases}
$$
as $n\to \infty$, uniformly for $\eta$ in compact subsets of $\bbH_+$. Hence 
$$\lim_{n\to \infty} \Tr  H(g_{\lambda_1}) H (\tilde{g}_{{\lambda_2}})=\begin{cases}
0, &\text{ if } \Im \eta_1 \cdot \Im \eta_2 >0,\\
-\frac{1}{(\eta_1-\eta_2)^2
}, & \text{ if } \Im \eta_1 \cdot \Im \eta_2 <0.
\end{cases}
$$
Futhermore, if we assume that $\Im \eta_1, \Im \eta_2>0$ we have 
\begin{multline*}
\lim_{n\to \infty} \Tr H\left(\frac{a_{\lambda_1}}{n^\alpha} \right) H \left(\frac{\tilde a_{\lambda_2}}{n^\alpha} \right)
=\frac{1}{2} \lim_{n\to \infty} \Re H(g_{\lambda_1}) H( \tilde g_{\overline{\lambda}_2})\\
= -\frac{1}{2} \Re \frac{1}{(\eta_1-\bar{\eta_2})^2}.
\end{multline*}
By using $\phi^{(n)}  = \sum_{j=1}^N c_j a_{\lambda_j}$ the statement now follows.
\end{proof}
\subsection{Proof of Theorem \ref{thm:CLTgeneral} in case $f(x)= \sum_{j=1}^N c_j \Im \frac{1}{x-\eta_j}$}
We are now ready to prove a Central Limit Theorem \ref{thm:CLTgeneral} for functions of the form \eqref{eq:defffree}. By then using Theorem \ref{thm:2.1-for-Green} we then obtain a proof of Theorem \ref{thm:CLTgeneral} for functions of that type.
\begin{theorem}\label{thm:freeCLTgreen}
Consider the OPE with  $a_n=1$ and $b_n=0$. Fix $x_0 \in (-2,2)$ and $0<\alpha<1$. Let $N \in \bbN$, $\eta_1,\ldots, \eta_N \in \bbH_+$, $c_1, \ldots,c_N \in \bbR$  and set 
$$f(x)= \sum_{j=1}^n c_j \Im \frac{1}{x-\eta_j}.$$
Then $$X_{f,\alpha,x_0}^{(n)}-\bbE X_{f,\alpha,x_0}^{(n)} \to N(0,\sigma_f^2)$$
in distribution, where $\sigma_f^2$ is given in \eqref{eq:CLTgeneralvariance}. 
\end{theorem}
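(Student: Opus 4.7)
The plan is to combine the factorization identity \eqref{eq:borodinokounkov} with Lemmas \ref{lem:smallnormproof} and \ref{lem:variancefree} to see that the cumulant generating function of $T(\phi^{(n)})$ converges to that of a centered Gaussian, and then transfer this conclusion to $X^{(n)}_{f,\alpha,x_0}$ via the Hankel-comparison estimate \eqref{eq:HankelThrow}. As a preliminary, the uniform Sobolev-type bound \eqref{eq:unifromnormsobolevphi} (an immediate consequence of the exponential decay \eqref{eq:freeCT}) validates \eqref{eq:borodinokounkov} for every $n$, so that applying it to \eqref{eq:cumultoepl} yields, for $t$ in a fixed small real neighborhood of the origin,
\[
\exp\Bigl(\sum_{m\ge 2} t^m \calC_m^{(n)}\bigl(T(\phi^{(n)})\bigr)\Bigr)
= e^{\frac{t^2}{2}\Tr H(\phi^{(n)})H(\widetilde\phi^{(n)})}\,\det\bigl(I+Q_n(R(t,\phi^{(n)})^{-1}-I)\bigr).
\]

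Lemma \ref{lem:smallnormproof} sends the Fredholm determinant to $1$, uniformly in such $t$, and Lemma \ref{lem:variancefree} identifies the limit of the trace in the exponent as $V := -\frac{1}{2}\sum_{i,j} c_i c_j\,\Re\,(\eta_i-\bar\eta_j)^{-2}$. Consequently the left-hand side converges to $e^{t^2 V/2}$, the Laplace transform of $N(0,V)$. Applying \eqref{eq:HankelThrow} (which says the cumulants of $T(\phi^{(n)})$ and of $n^{-\alpha}F(\vec\lambda,\vec c)$ differ by $o(1)$) together with \eqref{eq:cumulant-spectral-form}, which identifies the latter cumulants with those of $X^{(n)}_{f,\alpha,x_0}$, we conclude that $\calC_m^{(n)}(X^{(n)}_{f,\alpha,x_0})$ converges for every $m\ge 2$ to the corresponding coefficient of $\log e^{t^2 V/2}$: namely $V/2$ for $m=2$ and $0$ for $m\ge 3$. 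The uniform sub-Gaussian control supplied by Lemma \ref{lem:Concentration} means the convergence of the MGF on a neighborhood of the origin is enough to upgrade to convergence in distribution (the normal law being determined by its moments, method of moments closes the argument).

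The last step is to verify that $V$ coincides with $\sigma_f^2$ as given in \eqref{eq:CLTgeneralvariance}. Both expressions are bilinear in $(c_1,\ldots,c_N)$, so it suffices to match the bilinear form $(\eta_1,\eta_2)\mapsto -\frac12\,\Re\,(\eta_1-\bar\eta_2)^{-2}$ with the $H^{1/2}$-type inner product $\frac{1}{4\pi^2}\iint \frac{(f_{\eta_1}(x)-f_{\eta_1}(y))(f_{\eta_2}(x)-f_{\eta_2}(y))}{(x-y)^2}\,dx\,dy$; on the diagonal $\eta_1=\eta_2 = ib$ this collapses to the explicit identity $\frac{1}{8b^2}=-\frac12\Re(2ib)^{-2}$, and the general case follows by residue calculus or by passing to the Fourier side (where Poisson kernels become $e^{-b|\xi|}$). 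I expect no real conceptual obstacle beyond bookkeeping: the main delicacy is ensuring all three inputs — the comparison estimate \eqref{eq:HankelThrow}, the Fredholm determinant vanishing of Lemma \ref{lem:smallnormproof}, and the trace asymptotics of Lemma \ref{lem:variancefree} — hold with uniform control on a common neighborhood of $t=0$, so that the factored identity above can be read coefficient by coefficient in the limit.
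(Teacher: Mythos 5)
Your proposal is correct and follows essentially the same route as the paper's proof: pass from $X^{(n)}_{f,\alpha,x_0}$ to $T(\phi^{(n)})$ via \eqref{eq:cumulant-spectral-form} and \eqref{eq:HankelThrow}, factor via \eqref{eq:borodinokounkov}, take limits with Lemmas \ref{lem:smallnormproof} and \ref{lem:variancefree}, and identify the limiting variance by a residue computation. The only cosmetic difference is that the paper carries out the $\sigma_f^2$ identification as a direct double-integral/residue calculation rather than phrasing it as a bilinear-form matching, and that you make explicit (via Lemma \ref{lem:Concentration} and the method of moments) the standard step from cumulant convergence to convergence in distribution, which the paper leaves implicit.
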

\begin{proof}
It is enough to prove that $$\mathcal C_m^{(n)}(X_{f,\alpha,x}^{(n)}) \to \begin{cases}
\frac{\sigma_f^2}{2} , & \text{ if }  m =2\\
0, & \text{ if } m >2.
\end{cases}$$
By \eqref{eq:cumulant-spectral-form} we have that
$$\mathcal C_m^{(n)}(X_{f,\alpha,x}^{(n)}) =\mathcal C_m^{(n)}\left(n^{-\alpha} F\left(\vec \lambda,\vec c \right) \right), $$
so by \eqref{eq:HankelThrow} we have 
$$\lim_{n \to \infty} \mathcal C_m^{(n)}\left(X_{f,\alpha,x}^{(n)}\right) = \lim_{n \to \infty} \mathcal C_m^{(n)} \left(T\left(\phi^{(n)}\right)\right).$$
Now, by combining \eqref{eq:borodinokounkov} with Lemmas \ref{lem:smallnormproof} and \ref{lem:variancefree} we see that 
\begin{multline*}
\lim_{n\to \infty} \det\left(I+ P_n({\rm e}^{ t T(\phi^{(n)}))}-I)P_n\right)  {\rm e}^{-t \Tr P_n T (\phi^{(n)})}\\= \exp\left( -\frac{t^2}{4}\sum_{j=1}^n c_i c_j \Re \frac{1}{(\eta_i-\eta_j)^2}\right),
\end{multline*}
uniformly for $t$ in sufficiently small neighborhoods of the origin. 
By \eqref{eq:cumultoepl}, this implies that 
$$\mathcal  C_m^{(n)}(X_{f,\alpha,x}^{(n)}) \to \begin{cases}
-\frac{1}{4}\sum_{j=1}^n c_i c_j \Re \frac{1}{(\eta_i-\eta_j)^2} , & \text{ if }  m =2\\
0, & \text{ if } m >2.
\end{cases}$$
It remains to show that for $m=2$ this is indeed the variance $\sigma_f^2$ as in \eqref{eq:CLTgeneralvariance}. To this end, we note that 
\begin{multline}\label{eq:normphitore}
\iint \left(\frac{f(x)-f(y)}{x-y}\right)^2 {\rm d} x {\rm d} y\\
=\sum_{i,j=1}^N c_i c_j \iint \left(\frac{\Im \frac{1}{x-\eta_i}-\Im \frac{1}{y-\eta_i} }{x-y}\right)\left(\frac{\Im \frac{1}{x-\eta_j}-\Im \frac{1}{y-\eta_j} }{x-y}\right) {\rm d} x {\rm d} y
\end{multline}
Each of the double integrals can be written in the following way
\begin{multline*}
\iint \left(\frac{\Im \frac{1}{x-\eta_i}-\Im \frac{1}{y-\eta_i} }{x-y}\right)\left(\frac{\Im \frac{1}{x-\eta_j}-\Im \frac{1}{y-\eta_j} }{x-y}\right) {\rm d} x {\rm d} y\\
=
\iint \Im \frac{1}{x-\eta_i}  \frac{1}{y-\eta_i} \Im \frac{1}{x-\eta_j} \frac{1}{y-\eta_j}  {\rm d} x {\rm d} y\\
=-\frac{1}{2} \Re \iint  \frac{1}{x-\eta_i}  \frac{1}{y-\eta_i}  \frac{1}{x-\eta_j} \frac{1}{y-\eta_j}  {\rm d} x {\rm d} y
+\frac{1}{2} \Re \iint  \frac{1}{x-\eta_i}  \frac{1}{y-\eta_i}  \frac{1}{x-\overline{\eta_j}} \frac{1}{y-\overline{\eta_j}}  {\rm d} x {\rm d} y\\
=-\frac{1}{2} \Re \left(\int  \frac{1}{x-\eta_i} \frac{1}{x-\eta_j}  {\rm d} x \right)^2
+\frac{1}{2} \Re\left( \int  \frac{1}{x-\eta_i}   \frac{1}{x-\overline{\eta_j}}  {\rm d} x\right)^2
\end{multline*}
The last two integrals can be easily computed by a residue calculus. We recall that both $\Im \eta_1 , \Im \eta_2>0$. Then the first integral vanishes and the second gives (by taking $(2 \pi {\rm i})^2$ into account)
$$\frac{1}{4 \pi^2}\iint \Im \frac{1}{x-\eta_i}  \frac{1}{y-\eta_i} \Im \frac{1}{x-\eta_j} \frac{1}{y-\eta_j}  {\rm d} x {\rm d} y=-\frac{1}{2} \Re\left(\frac{1}{\eta_i-\overline{\eta_j}}\right)^2$$
By inserting this back into \eqref{eq:normphitore} we see that indeed 
$$\mathcal C_m^{(n)}\left( X_{f,\alpha,x_0}^{(n)}\right) \to \frac{1}{2} \sigma_f^2,$$
and the statement is proved.
\end{proof}
\begin{corollary}\label{cor:CLTgeneralspecial}
Theorem \ref{thm:CLTgeneral} holds for $f(x)= \sum_{j=1}^N c_j \Im \frac{1}{x-\eta_j}$ with the same parameters as in Theorem \ref{thm:freeCLTgreen}.  
\end{corollary}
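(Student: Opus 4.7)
The plan is to reduce the general case to the free case via an affine rescaling of $\mu$ and then combine the free CLT of Theorem \ref{thm:freeCLTgreen} with the comparison principle of Theorem \ref{thm:2.1-for-Green} through the method of moments.

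For the rescaling, I would introduce the pushforward $\tilde\mu$ of $\mu$ under $x\mapsto(x-b)/a$. A direct computation from the three-term recurrence shows that the orthonormal polynomials for $\tilde\mu$ are $\tilde p_n(y)=\sqrt{a}\,p_n(ay+b)$ and its recurrence coefficients are $\tilde a_n=a_n/a$, $\tilde b_n=(b_n-b)/a$, which by hypothesis satisfy $\tilde a_n-1=\mathcal O(n^{-\beta})$ and $\tilde b_n=\mathcal O(n^{-\beta})$. Setting $\tilde x_0=(x_0-b)/a\in(-2,2)$ and $\tilde f(y)=f(ay)=\sum_{j=1}^N(c_j/a)\,\Im(y-\eta_j/a)^{-1}$, which is again of the admissible form with poles in $\bbH_+$, the change of variables $x_j=ay_j+b$ identifies the OPE for $\mu$ with that for $\tilde\mu$ and gives the distributional identity
\beq \no
X^{(n)}_{f,\alpha,x_0}\;\stackrel{d}{=}\;\sum_{j=1}^n \tilde f\bigl(n^\alpha(y_j-\tilde x_0)\bigr)\;=:\;\tilde X^{(n)}.
\eeq
A substitution $u=ax$, $v=ay$ in \eqref{eq:CLTgeneralvariance} also yields $\sigma_{\tilde f}^2=\sigma_f^2$, so it suffices to prove a CLT for $\tilde X^{(n)}$ with respect to $\tilde\mu$ with limiting variance $\sigma_{\tilde f}^2$.

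Next let $\mu_0({\rm d}x)=\frac{\sqrt{4-x^2}}{2\pi}\chi_{[-2,2]}(x)\,{\rm d}x$ denote the semicircle law, whose recurrence coefficients are $a^0_n=1$, $b^0_n=0$. I would verify the hypotheses of Theorem \ref{thm:2.1-for-Green} for the pair $(\tilde\mu,\mu_0)$ at $\tilde x_0$: condition (i) is clear on any closed subinterval of $(-2,2)$, condition (ii) holds because the orthonormal polynomials for $\mu_0$ are (rescaled) Chebyshev polynomials of the second kind and hence uniformly bounded on compact subsets of $(-2,2)$, and the decay \eqref{eq:decay-rate-condition} was arranged in the previous step. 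Theorem \ref{thm:2.1-for-Green} then implies that the centered $m$-th moments of $\tilde X^{(n)}$ with respect to $\tilde\mu$ and to $\mu_0$ differ by $o(1)$ for each $m\ge 2$. On the other hand, Theorem \ref{thm:freeCLTgreen} applied to $\tilde f$ and $\tilde x_0$ gives
\beq \no
\tilde X^{(n)}-\bbE_0\tilde X^{(n)}\;\longrightarrow\; N(0,\sigma_{\tilde f}^2)
\eeq
in distribution under $\mu_0$, hence moment convergence to those of the Gaussian.

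Combining these two statements, all centered moments of $\tilde X^{(n)}$ under $\tilde\mu$ converge to those of $N(0,\sigma_{\tilde f}^2)$; since the Gaussian is determined by its moments, the method of moments yields convergence in distribution, which together with the identification in Step 1 and $\sigma_{\tilde f}^2=\sigma_f^2$ is exactly \eqref{eq:CLTgeneral}. There is no substantial obstacle at this point: the heavy lifting has already been done in Theorems \ref{thm:2.1-for-Green} and \ref{thm:freeCLTgreen}. The only minor subtlety is that the comparison principle is stated at the level of moments rather than convergence in distribution, so one must invoke the method of moments; the uniform-in-$n$ moment bounds needed to justify this are precisely what Lemma \ref{lem:Concentration} provides, since the variance stays bounded in the limit by $\sigma_f^2$.
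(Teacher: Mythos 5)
Your proposal is correct and follows essentially the same route as the paper's proof: affine reduction to $a=1,b=0$, verification of conditions (i) and (ii) of Theorem \ref{thm:2.1-for-Green} for the semicircle measure (bounded density and bounded Chebyshev polynomials on compact subsets of $(-2,2)$), and combination of the comparison theorem with the free CLT of Theorem \ref{thm:freeCLTgreen}. The only cosmetic difference is that the paper phrases the final step in terms of cumulant convergence, whereas you phrase it as the method of moments — these are equivalent since the first $m$ moments and the first $m$ cumulants determine each other and the Gaussian is moment-determinate.
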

\begin{proof}
Without loss of generality we can assume that $a_n \to 1$ and $b_n \to 0$. Indeed, if $a_n \to a>0$ and $b_n \to b$ then the map $x \mapsto (x-b)/a$ maps the measure  ${\rm d} \mu(x)$ to a measure  ${\rm d} \tilde \mu$ for which the recurrence coefficients converge as $\tilde a_n \to 1$ and $\tilde b_n\to 0$. 

To prove that the linear statistic converges to a normal distribution with given variance it is sufficient to prove that the cumulants converge to the cumulants of that normal distribution.  We already proved that this is true for  $a_n=1$ and $b_n=0$. The general case now follows by Theorem \ref{thm:2.1-for-Green} after showing that conditions $(i)$ and $(ii)$ of that theorem hold for the measure, $\mu_0$, corresponding to the free case $a_n\equiv 1$ and $b_n\equiv 0$. 

Condition $(i)$, namely that $\mu_0$  is  absolutely continuous on $[-2,2]$ with respect to the Lebesgue measure with a Radon-Nykodim derivative that is bounded on compact subsets, holds since ${\rm d}\mu_0(x)=\frac{\sqrt{4-x^2}}{2\pi}{\rm d} x$. Moreover, the orthonormal polynomials with respect to $\mu_0$ are the Chebyshev polynomials of the second kind, which are clearly bounded on any compact subsets of $(-2,2)$. This is condition $(ii)$, and therefore the statement follows.
\end{proof}


\section{Proofs of Theorems \ref{thm:main-result} and  \ref{thm:CLTgeneral} for $f \in C_c^1(\bbR)$} \label{sec:extension}

In this section we extend the results in Theorem \ref{thm:2.1-for-Green} and Corollary \ref{cor:CLTgeneralspecial} so that they hold for functions $f\in C_c^1(\bbR)$. This allows us to prove Theorems \ref{thm:main-result} and \ref{thm:CLTgeneral}.

\subsection{Estimating the variances}

An important role in the extension argument is played by the following space  of functions. Let  $f:\bbR \to \bbR$ be a function such that 
\begin{enumerate}
\item $\lim_{x\to -\infty} f(x)=0$ 
\item $\|f\|_{\mathcal L_w}:=\sup_{x,y\in \bbR} \sqrt{1+x^2}\sqrt{1+y^2} \left|\frac{f(x)-f(y)}{x-y}\right| <\infty.$
\end{enumerate}
The space of all functions that satisfy these two properties is a normed space with norm $\|f\|_{\mathcal L_w}$ (which is a weighted Lipschitz norm).  This space is denoted by $\mathcal L_w$ and is very useful for extending fluctuation results to allow more general classes of functions in the linear statistic. For this reason it was also used for example in \cite{DJ}.

 Obviously, the functions in $\mathcal L_w$ have the property that 
$$\sqrt{1+x^2}\sqrt{1+y^2} \left|\frac{f(x)-f(y)}{x-y}\right|\leq \|f\|_{\mathcal L_w}$$
for all $x,y \in \bbR$ and by setting $y\to -\infty$ we see that 
$$ |f(x)| \leq \frac{\|f\|_{\mathcal L_w}}{\sqrt{1+x^2}}.$$
Second, in case $f$ is differentiable, we can take the limit $y \to x$ and find 
$$ |f'(x)| \leq \frac{\|f\|_{\mathcal L_w}}{1+x^2}.$$
Hence, we have a bound on the behavior at infinity of both these functions and their derivative (in case it exists). Moreover, we have $\|f\|_\infty \leq \|f\|_{\mathcal L_w}$ and $\|f'\|_{\infty} \leq \|f\|_{\mathcal L_w}.$

A first indication that  $\mathcal L_w$ is a useful space for extending Central Limit Theorems is the fact that the limiting variance in \eqref{eq:CLTgeneralvariance} is continuous with respect to $\|\cdot\|_{\mathcal L_w}$. More precisely,
\begin{equation}
\label{eq:boundonlimitingvariance}
\sigma_f^2=\frac{1}{4 \pi^2}\iint_{\bbR^2} \left(\frac{f(x)-f(y)}{x-y}\right)^2 {\rm d} x {\rm d} y \leq \frac{1}{4} \|f\|_{\mathcal L_w}^2.
\end{equation}
But more is true: the next lemma says that the variance for any OPE for finite $n \in \bbN$ and any $f \in {\mathcal L_w}$ can be estimated in terms of  $\|f\|_{\mathcal L_w}$ and the variance of the linear statistic associated with $g(x)=(x-i)^{-1}$. Note that since $g$ is complex valued, the variance is also that of a compex random variable, i.e.
\beq \no
\Var X^{(n)}_{g}=\int \int _{\bbR^2}\left| g(x)-g(y) \right|^2 K_n(x,y)^2 {\rm d}\mu(x) {\rm d}\mu (y).
\eeq

\begin{proposition}\label{prop:continuityvariance1}
Let $g(x)= (x-i)^{-1}$. There exists a constant $c>0$ such that
\begin{equation}
\Var X^{(n)}_{f,x_0,\alpha} \leq   \|f\|_{\mathcal L_w}^2 \Var X^{(n)}_{g,x_0,\alpha},
\end{equation}
for any $f\in \mathcal L_w$ and $n \in \bbN$. 
\end{proposition}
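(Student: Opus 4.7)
\medskip

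\noindent\textbf{Plan of proof.} The approach is to exploit the standard variance formula for a determinantal point process with a projection kernel, together with the pointwise divided-difference bound built into the definition of $\|\cdot\|_{\mathcal{L}_w}$. Concretely, for any bounded (or sufficiently integrable) $h$, the determinantal structure gives
\begin{equation*}
\Var X^{(n)}_{h} \;=\; \tfrac{1}{2}\iint_{\bbR^2} |h(x)-h(y)|^{2}\, K_n(x,y)^{2}\, {\rm d}\mu(x){\rm d}\mu(y),
\end{equation*}
so the same formula with $h(x)=f(n^\alpha(x-x_0))$ (resp.\ $g(n^\alpha(x-x_0))$) computes $\Var X^{(n)}_{f,x_0,\alpha}$ (resp.\ $\Var X^{(n)}_{g,x_0,\alpha}$). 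The strategy is therefore to produce a pointwise majorisation of the integrand in the $f$-variance by $\|f\|_{\mathcal{L}_w}^2$ times the integrand in the $g$-variance.

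The first step is to unpack the $\mathcal{L}_w$-norm: for all $u,v\in\bbR$,
\begin{equation*}
|f(u)-f(v)|\;\leq\;\|f\|_{\mathcal{L}_w}\,\frac{|u-v|}{\sqrt{1+u^2}\sqrt{1+v^2}} .
\end{equation*}
The second step is to compute, for $g(x)=(x-i)^{-1}$,
\begin{equation*}
|g(u)-g(v)|\;=\;\left|\frac{u-v}{(u-i)(v-i)}\right|\;=\;\frac{|u-v|}{\sqrt{1+u^2}\sqrt{1+v^2}} .
\end{equation*}
Thus $g$ saturates the $\mathcal{L}_w$-bound up to a constant, and in particular
\begin{equation*}
|f(u)-f(v)|^{2}\;\leq\;\|f\|_{\mathcal{L}_w}^{2}\,|g(u)-g(v)|^{2}\qquad\text{for all } u,v\in\bbR .
\end{equation*}

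The third (and essentially final) step is to substitute $u=n^\alpha(x-x_0)$, $v=n^\alpha(y-x_0)$ to obtain
\begin{equation*}
|f(n^\alpha(x-x_0))-f(n^\alpha(y-x_0))|^{2}\;\leq\;\|f\|_{\mathcal{L}_w}^{2}\,|g(n^\alpha(x-x_0))-g(n^\alpha(y-x_0))|^{2},
\end{equation*}
multiply by $K_n(x,y)^2$, integrate against ${\rm d}\mu(x){\rm d}\mu(y)$, and divide by $2$. The determinantal variance formula then yields exactly the stated inequality with $c=1$.

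There is no real obstacle here; the only thing to be mildly careful about is justifying the determinantal variance identity for the complex-valued statistic $X^{(n)}_{g}$, but this is immediate since $\Var X^{(n)}_{g}$ is defined as $\bbE|X^{(n)}_{g}-\bbE X^{(n)}_{g}|^{2}$ and the identity $|g(u)-g(v)|^{2}=(g(u)-g(v))\overline{(g(u)-g(v))}$ splits into the usual real computations done componentwise on $\Re g$ and $\Im g$. Thus the whole proof reduces to one pointwise inequality on divided differences and a single line of integration.
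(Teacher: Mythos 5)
Your proof is correct and follows essentially the same route as the paper: both rely on the determinantal variance formula $\Var X_h^{(n)}=\tfrac12\iint|h(x)-h(y)|^2K_n(x,y)^2\,{\rm d}\mu\,{\rm d}\mu$, the identity $|g(u)-g(v)|=|u-v|/(\sqrt{1+u^2}\sqrt{1+v^2})$, and the scale invariance of the divided-difference ratio under $u\mapsto n^\alpha(u-x_0)$; the paper merely phrases the pointwise domination as a $\sup$-ratio bound before specializing, whereas you impose it directly on the integrand, and both yield the inequality with constant $1$.
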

\begin{proof} For any function $h:\bbR \to \bbR$ the variance of the linear statistic takes the form
$$\Var \sum_{j=1}^n h(x_j) = \frac{1}{2} \iint_{\bbR^2} (h(x)-h(y))^2 K_n(x,y)^2 {\rm d}\mu(x) {\rm d}\mu(y).$$
Hence if $\psi:\bbR \to \bbC$ is another function we can compare the linear statistic by writing 
\begin{multline}\label{eq:boundvarianceh}
\Var \sum_{j=1}^n h(x_j)  \leq \frac{1}{2} \left(\sup_{x,y\in \bbR} \left|\frac{h(x)-h(y)}{\psi(x)-\psi(y)}\right|\right)^2 \iint_{\bbR^2} |\psi(x)-\psi(y)|^2 K_n(x,y)^2 {\rm d}\mu(x) {\rm d}\mu(y).
\end{multline}
We apply this inequality to the special case $h(x)=f(n^\alpha(x-x_0))$ and $\psi(x)=g\left(n^\alpha(x-x_0) \right)=(n^\alpha(x-x_0)-{\rm i})^{-1}$. With this choice we find
\begin{multline}\label{eq:boundofrhwrtg}
\sup_{x,y\in \bbR} \left|\frac{h(x)-h(y)}{\psi(x)-\psi(y)}\right|= \sup_{x,y\in \bbR} \left|\frac{f(n^\alpha(x-x_0))-f(n^\alpha(y-x_0))}{(n^\alpha(x-x_0)-{\rm i})^{-1}-(n^\alpha(x-x_0)-{\rm i})^{-1}}\right|\\
=\sup_{x,y\in \bbR} \left|\frac{f(x)-f(y)}{(x-{\rm i})^{-1}-(y-{\rm i})^{-1}}\right|= \sup_{x,y\in \bbR} \left|\frac{f(x)-f(y)}{x-y} \right||x-{\rm i} ||y-{\rm i}|\\
= \sup_{x,y\in \bbR} \left|\frac{f(x)-f(y)}{x-y} \right|\sqrt{1+x^2}\sqrt{1+y^2}
=\|f\|_{\mathcal L_w}.
\end{multline}
By inserting this back into \eqref{eq:boundvarianceh} the statement follows
\end{proof}

Hence, to obtain a bound for for the variance for any $f\in \mathcal L_w$ it suffices to bound the variance for the single test function $g(x)= (x-i)^{-1}$. The next proposition establishes such a bound under the local reularity conditions on the measure $\mu$ specified in Theorem \ref{thm:main-result}. The generality and independence of scale of this bound make this a result of independent interest. In particular, by \cite[Theorem 2.1]{BD-Nevai} this implies that for $\mu_0$, $x_0$, and $\alpha>0$ as in Proposition \ref{prop:continuityvariance2} below, and any $f \in {\mathcal L_w}$
\beq \no
\mathbb{P} \left( \left|X_{f,\alpha,x_0}-\mathbb{E} X_{f,\alpha,x_0} \right|>\varepsilon \right) \leq 2 {\rm exp} \left(-A \varepsilon \right)
\eeq for some constant $A$.

\begin{proposition}\label{prop:continuityvariance2}
Let $\mu_0$ be a compactly supported Borel measure on $\bbR$ with finite moments and with Jacobi coefficients $\{a_n, b_n \}_{n=1}^\infty$. Let $0<\alpha<1$ and $x_0 \in S(\mu_0)$ such that there exists a closed interval $I$ around $x_0$ for which \\
(i) the measure $\mu_0$ is absolutely continuous on $I$ with respect to the Lebesgue measure and its Radon-Nikodym derivative is bounded there.   \\
 (ii) the orthonormal polynomials $p_n$ for $\mu_0$ are uniformly bounded on $I$.\\
Then  there exists a constant $c>0$ such that for any $n \in \bbN$
\beq \label{eq:boundedvariance}
 \Var X_{g,\alpha,x_0}^{(n)}\leq c.
\eeq
In particular, for any  $f\in \mathcal L_w$ and any $n \in \bbN$
 \begin{equation}\label{eq:continuityvariance}
\Var X_{f,\alpha,x_0}^{(n)}\leq c  \|f\|_{\mathcal L_w}^2.
\end{equation}

Moreover, if $\mu$ is a second measure Borel measure with finite moments for which $\tilde a_n-a_n = \mathcal O(n^{-\beta})$ and $\tilde b_n-b_n= \mathcal O(n^{-\beta})$ as $n\to \infty$, for some $\beta>\alpha$. Then  \eqref{eq:continuityvariance} also holds for the OPE corresponding to $\mu$. 
\end{proposition}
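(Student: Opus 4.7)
The plan is to apply Proposition~\ref{prop:continuityvariance1} throughout to reduce every bound in the statement to a uniform estimate of $\Var X^{(n)}_{g,\alpha,x_0}$ for the single function $g(x) = (x - {\rm i})^{-1}$. For the measure $\mu_0$ I would start from the standard formula
\[
\Var X^{(n)}_{g,\alpha,x_0} = \frac{1}{2}\iint |h(x)-h(y)|^2 K_n(x,y)^2 \,{\rm d}\mu_0(x)\,{\rm d}\mu_0(y),
\]
where $h(x)=g(n^\alpha(x-x_0))$. A direct computation gives
\[
|h(x) - h(y)|^2 = \frac{n^{2\alpha}(x-y)^2}{(n^{2\alpha}(x-x_0)^2 + 1)(n^{2\alpha}(y-x_0)^2 + 1)},
\]
and the Christoffel--Darboux identity $(x-y)^2 K_n(x,y)^2 = a_n^2(p_n(x)p_{n-1}(y) - p_{n-1}(x)p_n(y))^2$ cancels the diagonal singularity of $K_n^2$. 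Expanding the square via $(A-B)^2 \leq 2A^2 + 2B^2$ then factors the double integral as a sum of products of single integrals of the form
\[
I_{j,n} := \int \frac{n^\alpha p_j(x)^2}{n^{2\alpha}(x-x_0)^2 + 1} \,{\rm d}\mu_0(x), \qquad j \in \{n-1, n\}.
\]

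To control $I_{j,n}$ I split the domain into $I$ and $\bbR \setminus I$. On $I$, conditions (i) and (ii) bound $p_j^2$ by some constant $M^2$ and ${\rm d}\mu_0$ by $\|w\|_\infty\,dx$, and the substitution $u = n^\alpha(x - x_0)$ reduces the integrand to $(u^2+1)^{-1}$, producing the uniform bound $\pi M^2 \|w\|_\infty$. Off $I$ the denominator is bounded below by $n^{2\alpha}\delta^2$ with $\delta = \mathrm{dist}(x_0, \partial I) > 0$, so using $\int p_j^2\,{\rm d}\mu_0 = 1$ the contribution is $\mathcal O(n^{-\alpha})$. Boundedness of $\{a_n\}$ (guaranteed by compact support of $\mu_0$) then yields \eqref{eq:boundedvariance}, and \eqref{eq:continuityvariance} for $\mu_0$ follows immediately from Proposition~\ref{prop:continuityvariance1}.

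For the perturbation statement, again reducing to $g$ via Proposition~\ref{prop:continuityvariance1}, I would rewrite the variance through the spectral theorem as
\[
\Var X^{(n)}_{g,\alpha,x_0}\big|_{\mu} = \frac{1}{n^{2\alpha}}\|(I-P_n) G(\lambda_n) P_n\|_2^2
\]
in $\ell^2(\bbN)$, where $G(\lambda_n) = (J - \lambda_n)^{-1}$ and $\lambda_n = x_0 + {\rm i}/n^\alpha$. Inserting $\pm G_0(\lambda_n)$ and using $\|a+b\|_2^2 \leq 2\|a\|_2^2 + 2\|b\|_2^2$ bounds this by
\[
2\Var X^{(n)}_{g,\alpha,x_0}\big|_{\mu_0} + \frac{2}{n^{2\alpha}}\|(I-P_n)(G - G_0)(\lambda_n) P_n\|_2^2,
\]
the first term of which is already controlled. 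For the second, fix $\beta' \in (\alpha, \beta)$ and set $\widetilde P^1_n = P_{\{n - Mn^{\beta'},\, n+Mn^{\beta'}\}}$ for sufficiently large $M$. The Combes--Thomas estimate (Proposition~\ref{prop:CT}) makes the contribution to the Hilbert--Schmidt norm coming from indices outside this window exponentially small. For indices inside the window, the hypotheses \eqref{eq:measure-condition} and \eqref{eq:jacobi-perturb-cond2} of Proposition~\ref{Jacobi-perturbation} are verified exactly as in the proof of Theorem~\ref{thm:2.1-for-Green}, which gives $\|\widetilde P^1_n(G - G_0)(\lambda_n)\widetilde P^1_n\|_1 = o(n^\alpha)$. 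Combining this with the resolvent bound $\|G - G_0\|_\infty \leq 2n^\alpha$ and the interpolation $\|A\|_2^2 \leq \|A\|_1\|A\|_\infty$ produces $\|\widetilde P^1_n(G - G_0)(\lambda_n)\widetilde P^1_n\|_2^2 = o(n^{2\alpha})$, so that the second term is $o(1)$.

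The main obstacle is this perturbation step. A naive Hilbert--Schmidt estimate on $G - G_0$ that relies solely on the Combes--Thomas decay of each resolvent yields a bound of order $n^{4\alpha}$, which would force the unhelpful requirement $\beta > 3\alpha$. The crucial idea is to leverage the sharp trace-class estimate from Proposition~\ref{Jacobi-perturbation} through the interpolation $\|A\|_2^2 \leq \|A\|_1\|A\|_\infty$, trading a factor of $n^\alpha$ from the trace norm for a factor of $n^\alpha$ from the operator norm and landing at exactly $o(n^{2\alpha})$, precisely what is needed after the final division by $n^{2\alpha}$.
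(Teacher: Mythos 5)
Your proof is correct. The argument for $\mu_0$ follows essentially the same path as the paper: Christoffel--Darboux removes the apparent singularity, the double integral factors into products of single integrals, and these are bounded by splitting $\bbR$ into $I$ and its complement using conditions (i), (ii). (The paper keeps all three terms and exhibits the variance exactly as $2a_n^2\bigl[\Im (G_0)_{n,n}\,\Im (G_0)_{n-1,n-1}-(\Im (G_0)_{n,n-1})^2\bigr]$, a determinant of resolvent entries, while you use the cruder $(A-B)^2\le 2A^2+2B^2$, but the resulting integrals $I_{j,n}$ are precisely $\Im(G_0(\lambda_n))_{j,j}$ and the bound \eqref{eq:boundednessgfunction} is identical.)

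For the perturbation statement the two arguments genuinely diverge. Having already written the variance as a $2\times 2$ determinant in the specific entries $\Im (G)_{n,n}$, $\Im (G)_{n,n-1}$, $\Im(G)_{n-1,n-1}$, the paper simply invokes Proposition \ref{prop:comparinggreen} (entrywise convergence $(G(\lambda_n))_{n+j,n+k}-(G_0(\lambda_n))_{n+j,n+k}\to 0$) to conclude those finitely many entries stay bounded for $\mu$ as well; nothing else is needed. You instead express the variance as $n^{-2\alpha}\|(I-P_n)G(\lambda_n)P_n\|_2^2$, insert $\pm G_0(\lambda_n)$, localize with a mesoscopic window via Combes--Thomas, and interpolate $\|A\|_2^2\le\|A\|_1\|A\|_\infty$ to convert the trace-norm estimate of Proposition \ref{Jacobi-perturbation} into the needed Hilbert--Schmidt bound. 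This is valid -- the interpolation inequality does trade $o(n^\alpha)\cdot O(n^\alpha)=o(n^{2\alpha})$ exactly as you say, and the off-window contributions are indeed exponentially small -- but it re-derives in a more elaborate operator-theoretic package what the paper gets from the finite-dimensional determinant formula plus a single citation. The paper's route is the more economical one here; yours has the virtue of making explicit the mechanism (Combes--Thomas localization plus interpolation) that also underlies Proposition \ref{Jacobi-perturbation} itself, and it would apply even without the tidy $2\times 2$ structure.
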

\begin{proof}
In view of Proposition \ref{prop:continuityvariance1} we only need to prove that 
\begin{equation}
\label{eq:variancegstep0}
\Var X_{g,\alpha,x_0}^{(n)}= \frac12 \iint_{\bbR^2} |g(x)-g(y)|^2 K_n(x,y)^2 {\rm d}\mu_0(x) {\rm d}\mu_0(y) <c,
\end{equation}
for some constant $c>0$. To this end, we first note that 
\begin{multline*}
|g(x)-g(y)|^2 =\frac{1}{n^{2 \alpha}} \frac{(x-y)^2}{|x-x_0-{\rm i} /n^\alpha|^2 |y-x_0-{\rm i} /n^\alpha|^2}
\\
=(x-y)^2\Im  \frac{1}{x-x_0-{\rm i} /n^\alpha} \Im \frac{1}{y-x_0-{\rm i} /n^\alpha}
\end{multline*}
and hence, by the Christoffel-Darboux formula for $K_n(x,y)$, we have
\begin{multline*}
\iint_{\bbR^2} |g(x)-g(y)|^2 K_n(x,y)^2 {\rm d}\mu_0(x) {\rm d}\mu_0(y)\\
=\iint_{\bbR^2} \Im  \frac{1}{x-x_0-{\rm i} /n^\alpha} \Im \frac{1}{y-x_0-{\rm i} /n^\alpha} \\ \times (p_n(x)p_{n-1}(y)-p_n(y) p_{n-1}(x))^2 {\rm d}\mu_0(x) {\rm d}\mu_0(y).
\end{multline*}
By expanding the brackets we obtain three terms, each of which is a product of two single integrals
\begin{multline} \label{eq:varinacegstep1}
\iint_{\bbR^2} |g(x)-g(y)|^2 K_n(x,y)^2 {\rm d}\mu_0(x) {\rm d}\mu_0(y)\\
=a_n^2 \int_{\bbR} \Im  \frac{p_n(x)^2}{x-x_0-{\rm i} /n^\alpha}  {\rm d}\mu_0(x) \int_{\bbR} \Im \frac{ p_{n-1}(y)^2}{y-x_0-{\rm i} /n^\alpha}  {\rm d}\mu_0(y)\\
-2 a_n^2 \int_{\bbR} \Im  \frac{p_n(x) p_{n-1}(x)}{x-x_0-{\rm i} /n^\alpha}   {\rm d}\mu_0(x) \int_{\bbR} \Im \frac{p_{n-1}(y) p_n(y)}{y-x_0-{\rm i} /n\alpha}   {\rm d}\mu_0(y)\\
 +a_n^2 \int_{\bbR} \Im  \frac{p_{n-1}(x)^2}{x-x_0-{\rm i} /n^\alpha}  {\rm d}\mu_0(x) \int_{\bbR} \Im \frac{ p_{n}(y)^2 }{y-x_0-{\rm i} /n^\alpha} {\rm d}\mu_0(y).
\end{multline}
Now using he fact that the resolvent has the form 
$$\left(G_0(\lambda)\right)_{jk} = \int \frac{p_j(x)p_k(x) }{x-\lambda} {\rm d} \mu_0,$$
we thus find that
\begin{multline} \label{eq:varinacegintermsofresolvent}
\iint_{\bbR^2} |g(x)-g(y)|^2 K_n(x,y)^2 {\rm d}\mu_0(x) {\rm d}\mu_0(y)\\
=2a_n^2 \left(\Im \left(G_0(\lambda_n)\right)_{n,n}(\Im \left(G_0(\lambda_n)\right)_{n-1,n-1}-(\Im \left(G_0(\lambda_n)\right)_{n,n-1})^2\right),
\end{multline}
with $\lambda_n= x_0+ {\rm i }/n^\alpha$. Hence the variance is a determinant of a $2\times 2$ matrix where the entries are particular entries of the resolvent (This is an interesting observation in its own right). 

The first part of the proposition now follows after observing that the conditions for $\mu_0$ imply that $\Im\left(G_0(\lambda_n)\right)_{n-1,n-1}$,$\Im\left(G_0(\lambda_n)\right)_{n,n-1}$ and $\Im\left(G_0(\lambda_n)\right)_{n,n}$ are bounded, which we already proved in \eqref{eq:boundednessgfunction} and the discussion directly below. The second part then follows from Proposition \ref{prop:comparinggreen}.
 \end{proof}

The question now rises as to which functions we can approximate in $\mathcal L_w$-norm by functions of the form $f(x)= \sum_{j=1}^N c_j \Im \frac{1}{x-\eta_j}$. While the optimal space of functions may be larger than indicated by the following lemma, the resulting space, $C_c^1(\bbR)$, suffices for our purposes here.

\begin{lemma}\label{lem:fromlipschitztoC1c}
Let $ f\in C_c^1(\bbR)$ $($the space of continuously differentiable function with compact support$)$. For any $\eps>0$, there exists $N\in \bbN, c_1,\ldots,c_N$ and $\eta_1,\ldots,\eta_N\in \bbH_+= \{\Im \eta>0\}$ such that 
$$\left \|f(x)-\sum_{j=1}^N c_j \Im \frac{1}{x-\eta_j} \right \|_{\mathcal L_w} <\eps.$$
\end{lemma}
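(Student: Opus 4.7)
The plan is to proceed in two steps, with the approximant coming from a Riemann-sum discretization of the Poisson extension of $f$. Writing $\Im(x-(s+\mathrm{i}t))^{-1} = t/((x-s)^2+t^2) = \pi P_t(x-s)$, the Poisson extension is
$$F_t(x) = (P_t * f)(x) = \frac{1}{\pi}\int_{-R}^R f(s)\,\Im\frac{1}{x-(s+\mathrm{i}t)}\,ds,$$
where $\mathrm{supp}\,f \subset [-R,R]$. I will show (i) $\|F_t - f\|_{\mathcal{L}_w} \to 0$ as $t \to 0^+$, and (ii) for fixed $t$, the Riemann sum $S_{N,t}(x) = \frac{2R}{\pi N}\sum_{j=1}^N f(s_j)\,\Im(x-(s_j+\mathrm{i}t))^{-1}$ (with $s_j = -R + 2Rj/N$) satisfies $\|F_t - S_{N,t}\|_{\mathcal{L}_w} \to 0$ as $N\to\infty$. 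A diagonal choice of $t$ and $N$ then gives the lemma, since $f$ is real-valued so the coefficients of $S_{N,t}$ are real, and each $\eta_j = s_j+\mathrm{i}t$ lies in $\bbH_+$; thus $S_{N,t}$ has the form \eqref{eq:specialclass}.

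For step (i), set $h_t = F_t - f$. On any fixed compact, standard mollifier arguments yield $\|h_t\|_\infty + \|h_t'\|_\infty \to 0$ as $t \to 0^+$. For $|x| > 2R$, $f(x) = 0$ so $h_t(x) = F_t(x)$, and a direct calculation, together with the integration-by-parts identity $F_t' = P_t * f'$ (valid since $f$ has compact support), gives the quadratic decay $|h_t(x)| + |h_t'(x)| = O(t/x^2)$. To bound $Q(x,y) := \sqrt{1+x^2}\sqrt{1+y^2}\,|h_t(x)-h_t(y)|/|x-y|$ I would split into three regimes: (a) both $|x|,|y| \le 3R$, where the weight is $O(1)$ and the mean-value estimate with $\|h_t'\|_\infty$ on $[-3R,3R]$ suffices; (b) both $|x|,|y| > 3R$ with $|x-y| < |x|/2$, which forces $x$ and $y$ to share a sign so that the segment $[x\wedge y, x\vee y]$ lies in the decay region, and the mean-value theorem with $|h_t'(\xi)| = O(t/\xi^2)$ absorbs $\sqrt{1+x^2}\sqrt{1+y^2}$; (c) all remaining pairs, where $|x-y|$ is comparable to $\max(|x|,|y|)$ and the triangle-inequality bound $(|h_t(x)|+|h_t(y)|)/|x-y|$ together with the appropriate pointwise estimate dominates the weight. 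Each regime gives a bound that is $o(1)$ as $t\to 0^+$.

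For step (ii), with $t$ fixed the Riemann-sum error $F_t - S_{N,t}$ obeys the same two-regime estimates: standard Riemann-sum bounds for a $C^1$ integrand give $\|F_t - S_{N,t}\|_\infty + \|(F_t - S_{N,t})'\|_\infty = O(1/N)$ uniformly on compacts (with constants depending on $t$), while the pointwise decay at infinity transfers to $|F_t(x) - S_{N,t}(x)| = O(t/(Nx^2))$ for $|x| > 2R$ and similarly for derivatives. The same three-case analysis then gives $\|F_t - S_{N,t}\|_{\mathcal{L}_w} \to 0$ as $N\to\infty$.

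The main obstacle is the weighted Lipschitz estimate in step (i), since the weight $\sqrt{1+x^2}\sqrt{1+y^2}$ can grow without bound. Ordinary uniform convergence $\|h_t\|_\infty \to 0$ is insufficient — one genuinely needs the decay $(1+x^2)|h_t(x)| = O(t)$ at infinity, produced by the compact support of $f$. The case split above is designed precisely to exploit this decay exactly where the weight is large, and to fall back on the mollifier convergence where the weight is harmless.
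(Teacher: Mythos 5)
Your proof is correct in outline and takes a genuinely different route from the paper's in the key second step. Both proofs begin by Poisson-smoothing $f$ (you call it $F_t$, the paper calls it $f_\delta$) and both need $\|f - F_t\|_{\mathcal L_w} \to 0$; the paper simply cites \cite[Lemma 4.5]{DJ} for this, while you sketch the three-regime estimate yourself. The real divergence is in how $F_t$ is then approximated by finite linear combinations of $\Im(x-\eta_j)^{-1}$. You discretize the Poisson integral directly by a Riemann sum, which is conceptually transparent since $F_t$ already has the right integral structure, but it forces a second round of weighted-Lipschitz case analysis at each fixed $t$, including the quadratic decay $O(t/(Nx^2))$ of the Riemann error outside $\mathrm{supp}\,f$. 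The paper instead reduces the problem to a weighted $L^1$ approximation: it shows $\|f_\delta - \phi_\delta\|_{\mathcal L_w} \leq \frac{2}{\delta^2}\int |f-\phi|(1+s^2/\delta^2)\,{\rm d}s$ for any $\phi$, so it suffices to approximate $f$ by a $\phi$ of the desired form in $L^1(\bbR, (1+s^2/\delta^2)\,{\rm d}s)$, and it achieves this by invoking Beurling's extension of Wiener's Tauberian theorem applied to translates of $\psi(s) = \Im\big(\frac{1}{s-{\rm i}} - \frac{1}{s-2{\rm i}}\big)$, whose Fourier transform never vanishes. The paper's route trades your explicit but delicate two-variable estimates for a clean one-variable $L^1$ bound at the cost of citing a nontrivial density theorem; your route is elementary and self-contained but needs the case split (a)/(b)/(c) to be stated a bit more carefully — for instance, the subcase where one of $|x|,|y|$ is just below $3R$ and the other just above it does not literally have $|x-y|$ comparable to $\max(|x|,|y|)$, so you should either widen regime (a) or handle that boundary strip with the local $\|h_t'\|_\infty$ bound as in (a). With that adjustment the argument goes through.
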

\begin{proof}
For $\delta>0$ define 
$$f_\delta(x)= \frac{1}{\pi} \int_{\bbR} f(s) \Im \frac{1}{s-x-{\rm i} \delta}{\rm d} s.$$
It is a standard approximation result that $\|f-f_{\delta}\|_\infty \downarrow  0$ as $\delta \downarrow 0$. In fact, \cite[Lemma 4.5]{DJ} shows that this approximation also holds in $\mathcal L_w$, i.e.\ $\|f-f_\delta \|_{\mathcal L_w} \downarrow 0$ as $\delta \downarrow 0$. Hence it is sufficient to prove the statement for $f_\delta$ with $0<\delta <1$.

Note that for any $\phi$ we have 
\begin{multline*}
\|f_\delta-\phi_\delta\|_{\mathcal L_w} \leq  \int_\bbR  |f(s)-\phi(s)| \\\times \sup_{x,y\in \bbR} \left| \frac{\sqrt{1+x^2}\sqrt{1+y^2}}{x-y} \Im \left( \frac{1}{x-s-{\rm i} \delta}-\frac{1}{y-s-{\rm i} \delta}\right)\right|{\rm d} s\\
=  \int_\bbR  |f(s)-\phi(s)| \sup_{x,y\in \bbR} \left|\Im  \frac{\sqrt{1+x^2}\sqrt{1+y^2}}{(x-s-{\rm i} \delta)(y-s-{\rm i} \delta)}\right|{\rm d} s.
\end{multline*}
We simplify the weight in the integral using 
\begin{equation} \label{eq:beurlingweight} 
4(1+a^2)(1+b^2)\geq 2(1+(a+b)^2), \qquad \text{ for } a,b \in \bbR.
\end{equation}
After some algebra (by choosing $a= (s-x)/\delta$ and $b=s/\delta$ and using the fact that $0<\delta<1$), we see that  
$$\frac{\sqrt{1+x^2}}{\sqrt{(x-s)^2+ \delta^2}} \leq \delta^{-1} {\sqrt{2 (1+s^2/\delta^2)}},$$
and therefore 
\begin{equation}\label{eq:contLwinLweigth}
\|f_\delta-\phi_\delta\|_{\mathcal L_w} 
\leq \frac{2}{\delta^2} \int |f(s)-\phi(s) |  (1+s^2/\delta^2)  {\rm d} s.
\end{equation}
 Moreover, if \begin{equation}
\label{eq:defphisumlin}\phi(x)=\sum_{j=1}^{N'} c_j'  \Im \frac{1}{x-\eta_j'},\end{equation} then it is a simple calculation (by either a residue calculation or by computing Fourier transforms) that 
\begin{equation}\label{eq:defphisumlin2}
\phi_{\delta}(x)=\sum_{j=1}^N c_j' \Im \frac{1}{x-\eta_j-{\rm i} \delta},
\end{equation}
for some $c_j\in \bbR$ and $\eta_j \in \bbH_+$. Hence if we can approximate $f$ with a function $\phi$ of the  form \eqref{eq:defphisumlin} in $\mathbb L_1(\bbR, (1+s^2/\delta^2) {\rm d}s) $, then $\phi_\delta$ approximates $f_\delta$ in $\mathcal L_w$ and $\phi_\delta$ is of the desired form.

From \eqref{eq:beurlingweight} 
we see that  the weight function $w(s)=2(1+s^2/\delta^2)$ is a Beurling weight (i.e., $w(s_1)w(s_2) \geq w(s_1+s_2)$) and
hence by Beurling's extension (see for example \cite[Theorem V.4.1]{Korevaar}) of Wiener's Tauberian Theorem we see that the linear span of  $\{s\mapsto \psi(s-t)\}_{t \in \bbR} $ is dense in $\mathbb L_1(\bbR, w(s) {\rm d} s)$ if and only if the Fourier transform $\hat \psi$ never vanishes.  We choose this function to be 
$$\psi(s)=\Im\left( \frac{1}{s-{\rm i}}-\frac{1}{s-2 {\rm i}}\right)= \frac{1}{1+s^2}-\frac{1}{4+s^2}.
$$
Note that by the asymptotic behavior at $\pm \infty$, we have that $\psi \in L_1(\bbR, w(s) {\rm d} s)$. Moreover, one easily checks that the Fourier transform satisfies $\hat \psi>0$ and hence, for any $\eps >0$
there exists $N'\in \bbN$,  $c_1', \ldots,c_N'\in \bbR$ and $\tau_1,\ldots,\tau_N\in \bbR$ such that 
the function defined by 
$$\phi(x)=\sum_{j=1}^{N'} c_j'  \psi(s-\tau_j),$$
satisfies 
$$\int |f(s)- \phi(s)|   (1+s^2/\delta^2)  {\rm d} s \leq \delta^2 \eps/2.$$
Together with \eqref{eq:contLwinLweigth} and the fact that $\phi_\delta$ is of the desired form  \eqref{eq:defphisumlin2}, this proves the statement. 
\end{proof}

\subsection{Proofs of Theorems \ref{thm:main-result} and \ref{thm:CLTgeneral}}

We are now ready to prove Theorem \ref{thm:main-result} and \ref{thm:CLTgeneral}. 

\begin{proof}[Proof of Theorem \ref{thm:main-result}]
Theorem \ref{thm:main-result} for functions of the form $f(x)= \sum_{j=1}^N c_j (x-\eta_j)^{-1}$ is Theorem \ref{thm:2.1-for-Green}. For general $f\in C_c^1(\bbR)$ we use
\begin{multline*}
\left|\bbE \left(X^{(n)}_{f,\alpha,x_0}-\bbE  X^{(n)}_{f,\alpha,x_0} \right)^m -\bbE_0 \left(X^{(n)}_{f,\alpha,x_0}-\bbE_0 X^{(n)}_{f,\alpha,x_0}\right)^m\right| \\
\leq 
\left|\bbE \left(X^{(n)}_{f,\alpha,x_0} -\bbE  X^{(n)}_{f,\alpha,x_0} \right)^m -\bbE \left(X^{(n)}_{g,\alpha,x_0}-\bbE X^{(n)}_{g,\alpha,x_0}\right)^m\right| \\
+\left|\bbE \left(X^{(n)}_{g,\alpha,x_0}-\bbE  X^{(n)}_{g,\alpha,x_0} \right)^m -\bbE_0 \left(X^{(n)}_{g,\alpha,x_0}-\bbE_0 X^{(n)}_{g,\alpha,x_0}\right)^m\right| \\
\left|\bbE_0 \left(X^{(n)}_{f,\alpha,x_0} -\bbE_0  X^{(n)}_{f,\alpha,x_0} \right)^m -\bbE_0 \left(X^{(n)}_{g,\alpha,x_0}-\bbE_0 X^{(n)}_{g,\alpha,x_0}\right)^m\right|,
\end{multline*}
where $g(x)= \sum_{j=1}^N c_j (x-\eta_j)^{-1}$.
By Lemma \ref{lem:continuitymoments} together with  Proposition \ref{prop:continuityvariance2} and Lemma \ref{lem:fromlipschitztoC1c}, we see that we can choose $N, c_j$ and $\eta_j$ such that both the first and the third term can be made arbitarily small. By Theorem \ref{thm:2.1-for-Green} the middle term on the right-hand side then tends to zero as $n \to \infty$. This proves the statement. 
\end{proof}

\begin{proof}[Proof of Theorem \ref{thm:CLTgeneral}]
We already proved Corollary \ref{cor:CLTgeneralspecial} which is the statement in case $f$ is of the form $f(x)= \sum_{j=1}^N c_j (x-\eta_j)^{-1}$. For general $f\in C^1_c(\bbR)$, we use Lemma \ref{lem:continuitymoments} and Proposition \ref{prop:continuityvariance2} to deduce  that  for any $g(x)= \sum_{j=1}^N c_j (x-\eta_j)^{-1}$ we have, for $m \geq 3$,  
\begin{multline*}
\limsup_{n\to \infty} \left|\mathcal C_m^{(n)} (X^{(n)}_{f,\alpha,x_0})\right|\\ \leq \limsup_{n\to \infty} \left|\mathcal C_m^{(n)} (X^{(n)}_{g,\alpha,x_0}) \right| + \limsup_{n\to \infty}  \left |\mathcal C_m^{(n)} (X^{(n)}_{f,\alpha,x_0}))-\mathcal C_m^{(n)} (X^{(n)}_{g,\alpha,x_0}) \right|
\\\leq 
 D_1\|f-g\|_{\mathcal L_w} \exp D_2 \left(\|f|_{\mathcal L_w}+ \|f-g\|_{\mathcal L_w}\right).
\end{multline*}
for some constants $D_1,D_2>0$.   By Lemma \ref{lem:fromlipschitztoC1c} the latter can be made arbitrarily small and hence $$\lim_{n\to \infty} \mathcal C_m^{(n)} (X^{(n)}_{f,\alpha,x_0})=0,$$
for $m \geq 3$.

For $m=2$ we use  in addition \eqref{eq:boundonlimitingvariance} and find
\begin{multline*}
\limsup_{n\to \infty} \left|\mathcal C_2^{(n)} (X^{(n)}_{f,\alpha,x_0}))-\frac{1}{2}\sigma^2_f \right|= \limsup_{n\to \infty} \left|\mathcal C_2^{(n)} (X^{(n)}_{g,\alpha,x_0})-\frac12 \sigma_g^2 \right|\\
+\left|\frac12 \sigma_f^2-\frac{1}{2}\sigma_g^2\right| + \limsup_{n\to \infty}  \left |\mathcal C_2^{(n)} (X^{(n)}_{f,\alpha,x_0})-\mathcal C_2^{(n)} (X^{(n)}_{g,\alpha,x_0}) \right|
\\ \leq \tilde D_1 \left \|f-g \right\|_{\mathcal L_w} \exp \tilde  D_2 \left(\|f\|_{\mathcal L_w}+ \|f-g\|_{\mathcal L_w}\right).
\end{multline*}
for some constants $\tilde D_1,\tilde D_2>0$. 
Again, by Lemma \ref{lem:fromlipschitztoC1c} the left-hand side can be made arbitrarily small and so $$\lim_{n\to \infty} \mathcal C_2^{(n)} (X^{(n)}_{f,\alpha,x_0})=\frac{1}{2}\sigma_f^2.$$
This proves the statement.
\end{proof}


\begin{thebibliography}{99}

\bibitem{AGZ} G.~W.~Anderson, A.~Guionnet and O.~Zeitouni,
\emph{An introduction to random matrices},
Cambridge Studies in Advanced Mathematics, 118.\ Cambridge University Press, Cambridge, 2010. 

\bibitem{Avila} A.~Avila, \emph{On the Kotani-Last and Schr\"odinger conjectures}, to appear in J.\ Amer.\ Math.\ Soc., (arXiv:1210.6325)


\bibitem{BW} E.~Basor and H.~Widom, \emph{On a Toeplitz determinant identity of Borodin and Okounkov}, Integral Equations and Operator Theory
  37 (2000), no. 4, 397--401.

\bibitem{bo} A.~Borodin and A.~Okounkov, \emph{A Fredholm determinant formula for Toeplitz determinants}, Integral Equations Operator Theory {\bf 37} (2000), 386--396. 

\bibitem{B} P.~Bourgade, L. Erd\H{o}s, H.-T. Yau and J. Yin, \emph{Fixed energy universality for generalized Wigner matrices}, (arxiv
1407.5606)

 \bibitem{BdmV1} A.~Boutet de Monvel and A.~Khorunzy, \emph{Asymptotic distribution of smoothed eigenvalue density I. Gaussian random matrices,} Random Oper.\ Stochastic Equations {\bf 7} (1999), pp 1--22.
	
	
 \bibitem{BdmV2} A.~Boutet de Monvel and A.~Khorunzy, \emph{Asymptotic distribution of smoothed eigenvalue density II. Wigner random matrices.} Random Oper.\ Stochastic Equations {\bf 7} (1999), no. 2, 149--168.


\bibitem{BD-Nevai} J.~Breuer and M.~Duits, Nevai 265 \emph{The Nevai condition and a local law of large numbers for orthogonal polynomial ensembles}, Adv.\ Math.\ {\bf 265} (2014), 441--484.

\bibitem{BD-CLT} J.~Breuer and M.~Duits, \emph{Central Limit Theorems for biorthogonal ensembles and asymptotics of recurrence coefficients}, (arXiv:1309.6224) 

\bibitem{BLS} J.~Breuer, Y.~Last and B.~Simon, \emph{Stability of Asymptotics of Christoffel-Darboux Kernels}, Commun.\ Math.\ Phys.\ (2014), {\bf 330} (2014), 1155--1178.

\bibitem{cg} K.~Case and J.~Geronimo, \emph{Scattering theory and polynomials orthogonal on the unit circle}, J.\ Math.\ Phys.\ {\bf 20} (1979), 299--310.

\bibitem{CT} J.~M.~Combes and L.~Thomas, {\it Asymptotic behavior of eigenfunctions for multi-particle
Schr\"odinger operators}, Commun.\ Math.\ Phys.\ {\bf 34} (1973), 251--270.

\bibitem{costin-lebowitz} O.~Costin and J.~L.~Lebowitz, \emph{Gaussian fluctuation in random matrices}, Phys.\ Rev.\ Lett.\ {\bf 75} (1995), 69--72. 

\bibitem{deift} P.~Deift, \emph{Orthogonal Polynomials and Random Matrices: A Riemann-Hilbert Approach}, Courant Institute Lecture Notes, {\bf 3}  New Yorak University Press, New York, 1999.

\bibitem{dk} P.~Deift and R.~Killip, \emph{On the absolutely continuous spectrum of one-dimensional Schrodinger operators with square summable potentials}, Commun.\ Math.\ Phys.\ {\bf 203} (1999), 341--347.



\bibitem{Diac} P. Diaconis, \emph{Patterns in eigenvalues: the 70th Josiah Willard Gibbs lecture.} Bull.\ Amer.\ Math.\ Soc.\ (N.\S.\ ) {\bf 40} (2003), no. 2,  155-–178. 


\bibitem{DJ} M.~Duits and K.Johansson, \emph{On mesoscopic equilibrium for linear statistics for Dyson's Brownian Motion}, (arXiv:1312.4295) 

\bibitem{E} T.~A.~Ehrhardt, \emph{Generalization of Pincus' formula and Toeplitz operator determinants} Arch.\ Math.\ {\bf 80} (2003), no.\ 3, 302--309. 

\bibitem{FKS} Y.~V.~Fyodorov, B.~Khoruzenko and N.~Simms, \emph{Fractional Brownian motion with Hurst index $H=0$ and the Gaussian Unitary Ensemble}, (arXiv:1312.0212).

\bibitem{JL} S.~Jitomirskaya and Y.~Last, {\it Power-law subordinacy and singular spectra, I.\ Half-line operators},
Acta Math.\ {\bf 183} (1999), 171--189.


\bibitem{J} K.~Johansson, \emph{Random matrices and determinantal processes}, Mathematical Statistical Physics, Elsevier B.V.\ Amsterdam (2006) 1--55.

\bibitem{killip} R.~Killip, {\it Perturbations of 1-dimensional Schr\"odinger operators preserving the absolutely continuous spectrum}, Int.\ Math.\ Res.\ Not.\ {\bf 2002} (2002), 2029--2061.

\bibitem{kls} A.~Kiselev, Y.~Last and B.~Simon, {\it Stability of singular spectral types under decaying perturbations}, J.\ Funct.\ Anal.\ {\bf 198} (2003), 1--27.





\bibitem{Pearson} D.~J.~Gilbert, D.~Pearson, {\it On subordinacy and analysis of the spectrum of one-dimensional Schr\"odinger operators} J.\ Math.\ Anal.\ Appl.\ {\bf 128} (1987), 30--56. 

\bibitem{Korevaar} J.~Korevaar, \emph{Tauberian Theory,  A century of developments,} Grundlehren der Mathematischen Wissenschaften 329, 2004. Springer-Verlag Berlin Heidelberg.



\bibitem{KMVV}  A.~B.~J.~Kuijlaars, K.~T.-R.~McLaughlin, W.~Van Assche and M.~Vanlessen,  \emph{The Riemann-Hilbert approach to strong asymptotics for orthogonal polynomials on [-1,1]}, Adv.\ Math.\ {\bf 188} (2004), 337--398.

\bibitem{KV} A.~B.~J.~Kuijlaars and M.~Vanlessen, \emph{Universality for eigenvalue correlations from the modified Jacobi unitary ensemble}, Int.\ Math.\ Res.\ Not.\ {\bf 2002} No 30 (2002), 1575--1600. 

\bibitem{K} W.~K\"onig, \emph{Orthogonal polynomial ensembles in probability theory}, Probab.\ Surveys {\bf 2} (2005), 385--447.

\bibitem{L} R.~Lyons, \emph{Determinantal probability measures}, Publ.\ Math.\ Inst.\ Hautes Etudes Sci.\ {\bf 98}  (2003), 167--212.

\bibitem{P} L.~Pastur, \emph{Limiting laws of linear eigenvalue statistics for Hermitian matrix models}, J.\ Math.\ Phys.\ {\bf 47} (2006), no.\ 10, 103303, 22 pp. 

\bibitem{rakhmanov} E.~A.~Rahmanov, {\it On Steklov’s conjecture in the theory of orthogonal polynomials}, Matem.\ Sb.\ {\bf 108(150)} (1979), 581--608; English translation in: Math.\ USSR, Sb. {\bf 36} (1980),  549--575.


\bibitem{reed-simon1} M.~Reed and B.~Simon, {\it Methods of Modern Mathematical Physics, {\rm I}.\ Functional Analysis},
Academic Press, New York, 1972.

\bibitem{RS3} M.~Reed and B.~Simon,
\textit{Methods of Modern Mathematical Physics, III: Scattering Theory},
Academic Press, New York, 1979.


\bibitem{Shch} M. Shcherbina, \emph{Fluctuations of linear eigenvalue statistics of matrix models in the multi-cut regime}, J.\ Stat.\ Phys.\ {\bf 151} (2013), 1004--1034.

\bibitem{Simon-Boundedness} B.~Simon,
{\it Bounded eigenfunctions and absolutely continuous spectra for one-dimensional Schr\"odinger operators}, Proc.\ Amer.\ Math.\ Soc.\ {\bf 124} (1996), 3361--3369.

\bibitem{Simon} B.~Simon,  \emph{Trace ideals and their applications.} Second edition. Mathematical Surveys and Monographs, 120. American Mathematical Society, Providence, RI, 2005. viii+150 .

\bibitem{SimonSzego} B.~Simon, \emph{Szeg\H o's Theorem and its Descendants: Spectral Theory for $L^2$ Perturbations of Orthogonal Polynomials}, M.\ B.\ Porter Lecture Series, Princeton Univ.\ Press, Princeton, NJ, 2011.

 \bibitem{S} A.~Soshnikov, \emph{The central limit theorem for local linear statistics in classical compact groups and related combinatorial identities,} Ann.\ Probab.\ {\bf 28} (2000), no.\ 3, 1353--1370.

\bibitem{Sosh} A.~Soshnikov, \emph{Determinantal random point fields},  Uspekhi Mat.\ Nauk {\bf 55} (2000), no.\ 5 (335), 107--160; translation in Russian Math.\ Surveys {\bf 55} (2000), no.\ 5, 923--975.

\bibitem{Handbook} \emph{The Oxford handbook of random matrix theory}, 
Edited by G.~Akemann, J.~Baik and P.~Di Francesco. Oxford University Press, Oxford, 2011. 

\end{thebibliography}
\end{document}